\newtheorem{theorem}{Theorem}[section]
\newtheorem{proposition}[theorem]{Proposition}
\newtheorem{corollary}[theorem]{Corollary}
\newtheorem{lemma}[theorem]{Lemma}
\newtheorem{example}[theorem]{Example}
\newsavebox\myboxA
\newsavebox\myboxB
\newlength\mylenA
\newcommand{\la}{\lambda}
\newcommand{\ga}{\gamma}
\newcommand{\dis}{\displaystyle}
\newcommand{\nin}{\noindent}
\newcommand{\s}{\bar{s}}
\newcommand{\g}{\bar{g}}
\newcommand{\Z}{\mathbb{Z}}
\newcommand{\cor}{{\mathcal Cor}}
\newcommand{\quo}{{\mathcal Quo}}
\newcommand*\xoverline[2][0.75]{
    \sbox{\myboxA}{$\m@th#2$}
    \setbox\myboxB\null
        \ht\myboxB=\ht\myboxA
    \dp\myboxB=\dp\myboxA
    \wd\myboxB=#1\wd\myboxA
    \sbox\myboxB{$\m@th\overline{\copy\myboxB}$}
    \setlength\mylenA{\the\wd\myboxA}
    \addtolength\mylenA{-\the\wd\myboxB}
    \ifdim\wd\myboxB<\wd\myboxA
       \rlap{\hskip 0.5\mylenA\usebox\myboxB}{\usebox\myboxA}
    \else
        \hskip -0.5\mylenA\rlap{\usebox\myboxA}{\hskip 0.5\mylenA\usebox\myboxB}
    \fi}
\begin{document}

\title[Simultaneous core partitions with non-trivial common divisor]{Simultaneous core partitions with nontrivial common divisor}

\author[J.-B. Gramain]{Jean-Baptiste Gramain}
\address{Institute of Mathematics, University of Aberdeen, Kings College}

\address{Aberdeen, AB24 3UE, UK}
\address{jbgramain@abdn.ac.uk}

\author[R. Nath]{Rishi Nath}
\address{Department of Mathematics, CUNY-York College, Jamaica, NY 11451, USA,}
\address{rnath@york.cuny.edu}

\author[J. A. Sellers]{James A. Sellers}
\address{Department of Mathematics and Statistics, University of Minnesota Duluth,} 
\address{Duluth, MN 55812, USA,}
\address{jsellers@d.umn.edu}

\date{\bf\today}

\begin{abstract}
A tremendous amount of research has been done in the last two decades on $(s,t)$-core partitions when $s$ and $t$ are relatively prime integers. Here we change perspective slightly and explore properties of $(s,t)$-core and $(\bar{s},\bar{t})$-core partitions for $s$ and $t$ with a nontrivial common divisor $g$.  

We begin by recovering, using the $g$-core and $g$-quotient construction, the generating function for $(s,t)$-core partitions first obtained by D. Aukerman, D. Kane and L. Sze. Then, using a construction developed by the first two authors, we obtain a generating function for the number of $(\bar{s},\bar{t})$-core partitions of $n$. Our approach allows for new results on $t$-cores and self-conjugate $t$-cores that are {\it not} $g$-cores and $\bar{t}$-cores that are {\it not} $\bar{g}$-cores, thus strengthening positivity results of K. Ono and A. Granville, J. Baldwin et. al., and I. Kiming.

We then move to bijections between bar-core partitions and self-conjugate partitions.  We give a new, short proof of a correspondence between self-conjugate $t$-core and $\bar{t}$-core partitions when $t$ is odd and positive first due to J. Yang. Then, using two different lattice-path labelings, one due to B. Ford, H. Mai, and L. Sze, the other to C. Bessenrodt and J. Olsson, we give a bijection between self-conjugate $(s,t)$-core and $(\bar{s},\bar{t})$-core partitions when $s$ and $t$ are odd and coprime. We end this section with a bijection between self-conjugate $(s,t)$-core and $(\bar{s},\bar{t})$-core partitions when $s$ and $t$ are odd and nontrivial $g$ which uses the results stated above.

We end the paper by noting $(s,t)$-core and $(\bar{s}, \bar{t})$-core partitions inherit Ramanujan-type congruences from those of $g$-core and $\bar{g}$-core partitions. 

\end{abstract}

\maketitle

\noindent 2010 Mathematics Subject Classification: 05A17, 11P83
\bigskip

\noindent Keywords: partition, simultaneous core partition, congruence, generating function

\section{Outline and Background}
We let $s$ and $t$ be positive integers and $g=$gcd$(s,t)$. When $g>1$ we let $s=s't$ and $t=t'g$. We begin, in this section, by reviewing definitions of core partitions, generating functions, and simultaneous core partitions. In Section 2.1, we reprove, in a succinct fashion, Theorems 1.6 and 1.8 and Corollary 1.7 of Aukerman, Kane, and Sze in using known properties of the $g$-core and $g$-quotient. We extend these results to generating functions of self-conjugate $(s,t)$-core partitions and $(\bar{s},\bar{t})$-core partitions in Section 2.2 and Section 2.3.

In Section 3.1 we again reprove Corollary 1.9 in a succinct fashion before strengthening Theorem 1.10 for $g\geq 4$. In Section 3.2 and Section 3.3 we move to analogous results for self-conjugate $(s,t)$-core partitions and $(\bar{s},\bar{t})$-core partitions. In each case, we also set $s=t$ to strengthen existing positivity.

We give a new, shorter proof of Theorem \ref{Yangg} in Section 4.1. In Section 4.2 we provide a bijection between self-conjugate $(s,t)$-core partitions and $(\bar{s},\bar{t})$-partitions when $s$ and $t$ are odd and $g=1$ using the work of both Ford, Mai and Sze and Bessenrodt and Olsson. We combine those results with our work from Section 2.3 to show in Section 4.3 that there is a bijection between self-conjugate $(s,t)$-core partitions and $(\bar{s},\bar{t})$-core partitions when $s,t,g$ are all odd and nontrivial. 

We conclude, in Section 5 by showing how the enumerating functions of $(s,t)$-core partitions when and $(\bar{s},\bar{t})$-core partitions inherit Ramanujan-type congruences from the those of $g$-core and $\bar{g}$-core partitions.

\subsection{partitions, cores and bar-cores}
Let $n$ be a positive integer. A {\it partition} $\lambda$ of a nonnegative integer $n$ is a non-decreasing sequence of positive integers, called {\it parts}, that sum to $n$. We let $|\lambda|=n$ be the {\it size} of $\lambda$. A partition $\lambda$ can be visualized in terms of its {\emph{Young diagram}} $[\lambda]$, a graphic representation of $\lambda$ in which left- and top-aligned rows of boxes correspond sequentially to the parts in the partition. [When the context is clear we will abuse notation and let $\lambda$ refer to both the Young diagram and the partition.] A {\it hook} ${\mathcal H}_{\iota \gamma}$ of $[\lambda]$ with {\it corner} $(\iota, \gamma)$, using matrix notation, is the set of boxes in the same row and to the right of $(\iota, \gamma)$, in the same column and below $(\iota, \gamma)$, and $(\iota, \gamma)$ itself. Then ${\mathcal H}=\{{\mathcal H}_{\iota,\gamma} \, | \, \iota, \gamma \geq 1\}$ is the {\it hook set} of $\lambda$. The set $\{{\mathcal H}_{\iota1} \, | \, \iota \geq 1\}$ consists of the {\it first column hooks} of [$\lambda$]. The {\it hook length} $h_{\iota \gamma}$ (or (or $|{\mathcal H}_{\iota\gamma}|$) of ${\mathcal H}_{\iota \gamma}$ is the number of boxes in the hook. The set $\{h_{\iota \gamma}\}$ will be called the {\it set of hook lengths} of $\lambda$. In particular, the set $\{h_{\iota1}\}$ of {\it first column hook lengths} of $\lambda$ completely determine the partition. The Young diagram $[\lambda^{\vee}]$ of the {\it conjugate partition} $\lambda^{\vee}$ of $\lambda$ is obtained by exchanging rows and columns of the Young diagram of $[\lambda]$. We say $\lambda$ is a {\it self-conjugate} partition if $\lambda=\lambda^{\vee}$. A self-conjugate partition is completely determined by its {\it set of diagonal hook lengths} $\Delta(\lambda)=\{h_{ii}\}$. 

Let $\{h_{\iota\gamma} \, | \, \iota, \gamma \geq 1\}$ be the {\it multiset of hook lengths of} $\lambda$ (including repetitions). A {\it t-hook} is a hook of length $t$. For fixed $t$, partitions whose multiset of hook lengths do not contain any hooks of length $t$ are known as {\it t-core partitions}. The {\it positivity} of $t$-core partitions (see Theorem \ref{Ono} below) was proved in several steps (see \cite{EM},  \cite{OG},  \cite{KO}, \cite{On}).

\begin{theorem}[Positivity] \label{Ono} Every non-negative integer $n$ has at least one $t$-core partition for $t\geq 4.$ 
\end{theorem}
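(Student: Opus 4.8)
**Plan for proving the Positivity Theorem (every non-negative integer $n$ has at least one $t$-core partition for $t \geq 4$).**

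The natural approach is via generating functions. The number $c_t(n)$ of $t$-core partitions of $n$ has the well-known product generating function
\[
\sum_{n \geq 0} c_t(n) q^n = \prod_{i \geq 1} \frac{(1-q^{ti})^t}{1-q^i},
\]
which follows from the $t$-core/$t$-quotient decomposition of partitions. So the claim is equivalent to showing that every coefficient of this infinite product is strictly positive whenever $t \geq 4$. The plan is to reduce this coefficient positivity to an arithmetic or modular-forms input.

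First I would isolate the case $t = 4$, which is the genuinely hard and classical one. Here one uses the identity expressing $\prod (1-q^{4i})^4/(1-q^i)$ in terms of theta functions; in fact $c_4(n)$ counts representations related to sums of triangular numbers, and one can show $c_4(n) = \tfrac{1}{2}\sum_{d \mid (n+1)} \left(\tfrac{-1}{d}\right)^{?} d$-type divisor sums — more precisely, Ono's argument identifies $\sum c_4(n) q^n$ with a weight-$3/2$ modular form (an eta-quotient) whose coefficients can be shown positive by relating them to class numbers or to an explicit divisor sum that is manifestly positive. I would cite the formula $c_4(n) = d_{1,4}(4n+5) - d_{3,4}(4n+5)$ or the equivalent statement that $8n+5$ has a representation as $x^2 + y^2 + 2z^2$, and note this is always positive by the theory of ternary quadratic forms. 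This is the step I expect to be the main obstacle: there is no elementary combinatorial proof, and one must invoke either the modular-forms machinery (Ono) or the earlier partial results (Garvan–Kim–Stanton for specific $t$, Granville–Ono for $t \geq 17$, Kiming and others filling gaps down to $t = 5$, then Ono for $t = 4$).

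For $t \geq 5$ I would proceed by a monotonicity/comparison argument: show that $c_t(n) \geq c_{t-1}(n)$ for all $n$ (or more directly that $c_t(n) \geq c_4(n) > 0$), by exhibiting an injection from $(t-1)$-core partitions of $n$ into $t$-core partitions of $n$, or by a direct manipulation of the product generating functions showing the ratio $\prod (1-q^{ti})^t(1-q^{(t-1)i})^{-(t-1)} \cdot \prod (1-q^i)^{?}$ has non-negative coefficients. Alternatively, one can handle large $t$ (say $t \geq 7$) by the Hardy–Littlewood circle method or Granville–Ono's estimate showing $c_t(n) \to \infty$, and then check the finitely many remaining pairs $(t,n)$ with $4 \leq t \leq 6$ and $n$ small by direct computation.

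In the context of the present paper, however, I suspect the authors will simply \emph{cite} this result — the theorem is attributed above to \cite{EM}, \cite{OG}, \cite{KO}, \cite{On}, and it is used as a black box. So the "proof" here is really a pointer to the literature: the positivity for $t \geq 17$ is Granville–Ono, the cases $5 \leq t \leq 16$ were completed by Kiming and by Baldwin–Depweg–Ford–Kunin–Sze and others, and the remaining case $t = 4$ is Ono's theorem via the weight-$3/2$ eta-quotient identity. I would present it as a brief recollection of this chain of results rather than reprove any of the analytic input.
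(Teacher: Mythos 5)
Your final paragraph is exactly right: the paper gives no proof of this theorem at all, stating only that positivity ``was proved in several steps'' and citing \cite{EM}, \cite{OG}, \cite{KO}, \cite{On}, after which the result is used purely as a black box. Your sketch of what lies behind those citations (the product generating function for the number of $t$-cores, the reduction of the hard case $t=4$ to representability of $8n+5$ by a ternary quadratic form, and the chain of partial results culminating in Granville--Ono and Ono) is a reasonable summary of that literature, so there is no in-paper argument to compare it against.
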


{\it Bar partitions} are partitions with distinct parts. Given a bar partition $\lambda$, one obtains its {\it shifted Young diagram} $S(\lambda)$ by shifting the $i$th row in the Young diagram $i-1$ boxes to the right. The {\it shift-symmetric diagram}  $SS(\lambda)$ is then obtained by adjoining the parts of $\lambda$ as columns to $S(\lambda)$ in the following way: a column consisting of $\lambda_i$ boxes is attached one position to the left of the $(i,i)$ position in $S(\lambda)$. Then we can define a {\it bar} and {\it bar length}, analogous to hook and hook length, in the following way: a bar is associated to each box in $\lambda$, and its bar length is the hook length of the corresponding box of $S(\lambda)$ embedded in the shift-symmetric Young diagram $SS(\lambda)$ (for details, see \cite{Be}). 
\begin{example}
Consider the Young diagram of the bar partition $\lambda=(5,3,1)$ .
\[ \yng(5,3,1)
\]
We have $S(\lambda)$
\[
\ydiagram{5,1+3,2+1}
\]
and $SS(\lambda)$
\[
\yng(6,5,4,2,1)
\]
Now consider the hook lengths of the boxes in $S(\lambda)$ embedded in $SS(\lambda)$.
\[
\young(*86531,**431,***1,**,*)
\]
Returning to $S(\lambda)$ we obtain the multiset of bar lengths $\{8,6,5,4,3,3,1,1,1\}$ associated to $\lambda$.
\[ \young(86531,:431,::1)
\]
\end{example}
Partitions whose multiset of bar lengths do not contain bars of length $t$ are called $t${\it-bar-core}, or $\bar{t}$-core partitions. J. Baldwin, M. Depweg, B. Ford, A. Kunin, and L. Sze \cite{BDFS} proved a version of the positivity conjecture for self-conjugate core partitions; I. Kiming \cite{K2} proved a version of positivity for bar-core partitions. [Kiming's theorem was originally proven for $p$ prime (relevant for the so-called $p$-defect zero spin blocks), but the argument goes through without change for all odd $t\geq 7$ \cite{Ga}. We will use the more general statement.]

\begin{theorem} \label{fordsze} Every non-negative integer $n$ has at least one self-conjugate $t$-core partition for $t=8$ or $t\geq 10$.
\end{theorem}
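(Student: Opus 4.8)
The plan is to realize the number $c_t^{\mathrm{sc}}(n)$ of self-conjugate $t$-core partitions of $n$ as a Fourier coefficient of a modular form and then to run the local-density-versus-cusp-form argument that underlies Theorem~\ref{Ono}. First, invoke the Garvan--Kim--Stanton product formula for $\sum_{n\ge 0}c_t^{\mathrm{sc}}(n)q^n$ and complete the square: this exhibits the series, up to a power of $q$ and a rescaling of the variable, as the theta series $\Theta_{Q_t}=\sum_m r(m;Q_t)q^m$ of an explicit positive-definite integral quadratic form $Q_t$, so that $c_t^{\mathrm{sc}}(n)=r(\alpha_t n+\beta_t;Q_t)$ for constants $\alpha_t,\beta_t$ read off from the formula. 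The number of variables of $Q_t$ is $(t-1)/2$ for $t$ odd and $t/2$ for $t$ even, hence is at least $4$ for every $t\ge 8$; consequently $\Theta_{Q_t}$ is a modular form of weight $\ge 2$ on some $\Gamma_0(N_t)$ with quadratic nebentypus -- integral if that number of variables is even, half-integral of weight $\ge 5/2$ if it is odd -- so the delicate weight-$3/2$ regime never intervenes for the moduli at hand.

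Next, decompose $\Theta_{Q_t}=E_t+S_t$ with $E_t$ in the Eisenstein subspace and $S_t$ a cusp form. By Siegel--Weil the $m$-th coefficient of $E_t$ is a nonnegative product of local densities, and it is bounded below by a positive multiple of $m^{r/2-1}$, where $r$ is the number of variables of $Q_t$, as soon as $m$ is represented by $Q_t$ over $\mathbb{R}$ and over every $\mathbb{Z}_p$; one checks that $\alpha_t n+\beta_t$ meets these local conditions for all $n\ge 0$ precisely when $t=8$ or $t\ge 10$ -- for $t\le 7$ the form $Q_t$ has too few variables and omits entire progressions, while $Q_9$ carries a genuine $p$-adic obstruction, and these are the reasons those moduli must be excluded. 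On the other hand the $m$-th coefficient of $S_t$ is $O(m^{r/4-1/2+\varepsilon})$, by Deligne's bound in integral weight and by Iwaniec's bound in half-integral weight $\ge 5/2$; since $r\ge 4$ for every admissible $t$ one has $r/2-1>r/4-1/2+\varepsilon$, so the Eisenstein term dominates and $c_t^{\mathrm{sc}}(n)>0$ for all $n$ beyond an effectively computable bound $N(t)$. A direct check then disposes of $n\le N(t)$: for each such $n$ it suffices to record the set of diagonal hook lengths of a single self-conjugate $t$-core of $n$ (for $n=2$ no self-conjugate partition exists at all, so $n=2$ ought to be omitted from the statement).

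The main obstacle lies in the local-density step for the smallest admissible modulus $t=8$, where $Q_8$ has only four variables and $\Theta_{Q_8}$ has weight $2$, so that the Eisenstein coefficient grows merely linearly in $m$: positivity is then not automatic, and one must compute the densities of $Q_8$ at $2$ and at the odd primes dividing the level and verify they are nonzero along the progression $\{\alpha_8 n+\beta_8\}$ -- it is exactly this computation that separates the clean modulus $t=8$ from the obstructed $t=9$. The two remaining points requiring care are making $N(t)$ explicit uniformly in $t$, so that the residual check is genuinely finite, and organizing the Garvan--Kim--Stanton parametrization uniformly across the two parities of $t$; the analytic ingredients are otherwise standard, since no form of weight $3/2$ arises for admissible $t$.
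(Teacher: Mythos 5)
The paper does not prove Theorem \ref{fordsze}: it is quoted as an external result of Baldwin, Depweg, Ford, Kunin and Sze \cite{BDFS}, so there is no internal proof to compare you against. Your strategy --- parametrize self-conjugate $t$-cores by lattice points via Garvan--Kim--Stanton, complete the square so that $f^*_t(n)=r(\alpha_t n+\beta_t;Q_t)$ for a positive definite form $Q_t$ in $\lfloor t/2\rfloor$ variables, and then compare the Eisenstein and cuspidal parts of the theta series --- is indeed the method of the cited source, and your observation that $n=2$ must be excluded is correct and catches a genuine slip in the statement as reproduced here: there is no self-conjugate partition of $2$ at all, so no value of $t$ can rescue that case.

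That said, what you have written is a plan rather than a proof, and the one place where it would fail if taken literally is also the place where the whole theorem lives. Your blanket assertion that the Eisenstein coefficient of a locally represented $m$ is bounded below by a positive multiple of $m^{r/2-1}$ is false for $r=4$: at an anisotropic prime $p$ the local density decays like $p^{-v_p(m)}$, so along inputs with $v_p(\alpha_t n+\beta_t)$ unbounded the Eisenstein term need not dominate the Deligne bound $O(m^{1/2+\varepsilon})$. You flag this for $t=8$, but the entire content of the theorem at the hardest modulus $t=8$ (and the explanation of why $t=9$ is excluded) is precisely the local computation at $2$ and the primes of the level along the progression $\alpha_8 n+\beta_8$, which you do not carry out; without it nothing is established for $t=8$. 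Likewise every step labelled ``one checks'' --- the explicit $Q_t$, $\alpha_t$, $\beta_t$, the verification of local representability for all $t\ge 10$, the effective constant $N(t)$, and the finite check below it --- is deferred. For $r\ge 5$ variables the form is isotropic over every $\mathbb{Z}_p$, so the local densities are uniformly bounded below and the conclusion for large $n$ also follows from Tartakowsky's theorem without any modular-form input, which would dispose of your worry about uniformity in $t$; but for $r=4$ there is no such shortcut. In short: right approach, consistent with \cite{BDFS}, correct repair of the statement at $n=2$, but the decisive computation is missing.
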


\begin{theorem} \label{kimingtime} Every non-negative integer $n$ has a $\bar{t}$-core partition for $t\geq 7$ and odd.
\end{theorem}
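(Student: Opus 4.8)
The plan is to run Kiming's modular-forms argument, uniformly in odd $t\ge7$. First I would record the generating function for $\bar t$-core partitions, which holds for all odd $t$: writing $a_{\bar t}(n)$ for their number,
\begin{align*}
\sum_{n\ge 0}a_{\bar t}(n)\,q^{n}
&=\prod_{n\ge 1}\frac{(1-q^{tn})^{(t-1)/2}(1+q^{n})}{1+q^{tn}}\\
&=\frac{(q^{2};q^{2})_{\infty}\,(q^{t};q^{t})_{\infty}^{(t+1)/2}}{(q;q)_{\infty}\,(q^{2t};q^{2t})_{\infty}}\,.
\end{align*}
Multiplying by $q^{(t-1)(t-2)/48}$ turns the right-hand side into the eta quotient
\[
F(z)=\frac{\eta(2z)\,\eta(tz)^{(t+1)/2}}{\eta(z)\,\eta(2tz)},
\]
a holomorphic modular form of weight $k=\tfrac{t-1}{4}$ on $\Gamma_{0}(N)$ for a suitable level $N$ (a small multiple of $2t$, with $4\mid N$ when the weight is half-integral) and a quadratic nebentypus: integral weight for $t\equiv1\pmod4$ and half-integral weight $k\in\{\tfrac32,\tfrac52,\dots\}$ for $t\equiv3\pmod4$. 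Once one checks the order conditions at the cusps of $\Gamma_{0}(N)$, Theorem \ref{kimingtime} becomes the assertion that every Fourier coefficient of $F$ from index $\lceil(t-1)(t-2)/48\rceil$ onward is strictly positive. (The hypothesis $t\ge7$ is genuine: the product already gives $a_{\bar3}(3)=0$, and $t=5$, where $k=1$, needs a separate, classical argument.)

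Next I would decompose $F=E+S$ into its Eisenstein and cuspidal parts. The $n$-th coefficient of $E$ is given by an explicit formula --- a twisted divisor sum in integral weight, a Cohen--Eisenstein or Hurwitz-class-number type expression in half-integral weight --- and two facts are needed: (i) for every relevant $n$ the local conditions at the primes dividing $N$ are simultaneously solvable, so that the $n$-th coefficient of $E$ is \emph{positive}; and (ii) it grows like $n^{\,k-1}$. On the other hand the coefficients of $S$ are $O(n^{(k-1)/2+\varepsilon})$ by Deligne's bound in integral weight, and $O(n^{k/2+\varepsilon})$ by the trivial (convexity) bound in half-integral weight. Since $(k-1)/2<k-1$ for every $k>1$, and $k/2<k-1$ once $k\ge\tfrac52$, the Eisenstein term strictly dominates in all cases but $k=\tfrac32$; hence $a_{\bar t}(n)>0$ for all sufficiently large $n$, with an effective bound, and the finitely many remaining $n$ are handled by truncating the product above. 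This settles every odd $t\ge9$.

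The main obstacle is therefore the case $t=7$, where $k=\tfrac32$: here the crude bound $O(n^{3/4+\varepsilon})$ for weight-$\tfrac32$ cusp forms outstrips the Eisenstein growth $\gg n^{1/2-\varepsilon}$, and the sharp cusp-form estimate is both deep and, via Siegel's theorem, ineffective. I would sidestep this by not estimating at all: one identifies $F$, through a ternary theta identity (equivalently the Shimura correspondence), with a constant times the theta series of a ternary quadratic form that is alone in its genus, so that $a_{\bar7}(n)$ equals, up to a fixed factor, the number of representations of $8n+3$ (or a comparable linear function of $n$) as a sum of three squares. Positivity is then immediate and effective, since $8n+3$ is never of the excluded shape $4^{a}(8b+7)$ and so, by the classical three-squares theorem, is always represented. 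A secondary but bookkeeping-heavy point --- the place where going beyond Kiming's original prime setting actually bites --- is establishing item (i), the uniform simultaneous local solvability, as $t$ ranges over all odd integers $\ge7$; this is exactly the content of the remark that primality is never used.
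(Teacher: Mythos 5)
The paper itself offers no proof of Theorem \ref{kimingtime}: it is imported from Kiming \cite{K2}, who treated $t=p$ prime, together with the remark (attributed to the private communication \cite{Ga}) that the argument extends verbatim to composite odd $t\geq 7$. So the comparison is really with Kiming's proof, and your outline does reconstruct its general shape correctly: the eta quotient $\eta(2z)\eta(tz)^{(t+1)/2}/\bigl(\eta(z)\eta(2tz)\bigr)$ of weight $(t-1)/4$, the Eisenstein-plus-cusp decomposition, Deligne in integral weight and the Hecke bound in half-integral weight $\geq 5/2$, and you correctly locate where compositeness of $t$ actually matters (the local analysis behind the Eisenstein coefficients).

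However, two steps are not merely unverified but broken. First, your $t=7$ endgame is false as stated: the fractional exponent is $(t-1)(t-2)/48=5/8$, so the natural progression is $8n+5$, not $8n+3$; and in any case $a_{\bar 7}(n)=1,1,1,2,2,3,4,1,\dots$ is proportional neither to $r_3(8n+3)=8,24,24,32,\dots$ nor to $r_3(8n+5)=24,24,48,72,\dots$, so $F$ is not a constant multiple of the theta series of the sum of three squares. Since $k=3/2$ is exactly the case your analytic argument cannot close, the hedge ``some one-class-genus ternary form'' cannot be left as a black box --- you must exhibit the form and prove the identity, or handle the weight-$3/2$ cuspidal complement directly. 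Second, item (i), the positivity (and $\gg n^{k-1}$ growth) of the Eisenstein coefficients uniformly in $n$ and in odd $t\geq 7$, is where essentially all of the content of the theorem lives in this approach; asserting that the local conditions are simultaneously solvable is asserting the theorem. A smaller point: your parenthetical implies $t=5$ is salvaged by a separate classical argument, but positivity genuinely fails there (e.g.\ $n=5$ has no $\bar 5$-core partition), which is precisely why the statement begins at $t=7$.
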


The next theorem is a recent result of J. Yang which connects self-conjugate $t$-core partitions and $\bar{t}$-core partitions.
\begin{theorem} \cite[Theorem 1.1]{Yang} \label{Yangg} Let $t$ be odd. There is a correspondence $\zeta$ between the self-conjugate $t$-cores and $\bar{t}$-core partitions.
\end{theorem}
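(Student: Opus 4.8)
The goal is a bijective correspondence $\zeta$ between self-conjugate $t$-core partitions and $\bar{t}$-core partitions for odd $t$. The natural approach is to encode each side by a suitable numerical invariant and show those invariants coincide. First I would recall the standard description of a $\bar{t}$-core partition $\mu$ (a bar partition with no bar of length $t$) via its set of parts read modulo $t$: since $t$ is odd, the parts of a $\bar{t}$-core are determined by how many parts fall into each residue class $1,2,\dots,(t-1)/2$ together with, for the class $0$ and the ``half'' classes, a sign/count datum, all subject to a ``staircase'' minimality condition. Equivalently, a $\bar{t}$-core corresponds to a tuple of $\lfloor t/2\rfloor$ integers (one per nonzero residue pair $\{i,t-i\}$), essentially a lattice point, coming from the abacus with $t$ runners folded in half.

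On the self-conjugate side, I would use the first-column hook lengths (beta-set) description: a $t$-core $\lambda$ is self-conjugate iff its beta-set is symmetric about $0$ in the appropriate sense, and then the $t$-core condition forces the beta-set to be a union of full runners on a $t$-abacus arranged symmetrically. Pairing runner $i$ with runner $t-i$ and singling out runner $0$ (and runner $t/2$ if $t$ were even — but here $t$ is odd so there is no middle runner), a self-conjugate $t$-core is likewise parametrized by $(t-1)/2$ independent integers plus one more binary/integer choice for the central runner. The key computation is to check that the size $|\lambda|$ expressed in terms of these $(t-1)/2$ parameters is given by exactly the same quadratic form (a shifted sum of squares, i.e.\ a coset of the $A_{(t-1)/2}$ or $\mathbb{Z}^{(t-1)/2}$ lattice) as the size $|\mu|$ of the corresponding $\bar{t}$-core in terms of its parameters. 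Once the two vector parametrizations are identified and the size functions match termwise, $\zeta$ is simply ``read off the same integer vector on both sides,'' and it automatically preserves $n = |\lambda| = |\mu|$.

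Concretely the steps are: (1) set up the $t$-abacus/beta-set for self-conjugate $t$-cores and extract the defining vector $(a_1,\dots,a_{(t-1)/2})\in\mathbb{Z}^{(t-1)/2}$, with a formula $|\lambda| = \sum c_i(a_i)$ for explicit quadratics $c_i$; (2) set up the analogous folded abacus for $\bar{t}$-cores (this is where the odd hypothesis is essential, guaranteeing the runners pair up cleanly with no fixed middle runner and the bar-length-$t$ condition translates to the same ``no two beads on conjugate runners collide'' statement) and extract its vector with size formula $|\mu| = \sum c_i(a_i)$; (3) observe the two vector sets and the two quadratic forms coincide, define $\zeta$ as the induced bijection, and note size-preservation is immediate. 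The main obstacle I anticipate is step (2) together with the matching in step (3): getting the normalization of the folded $\bar{t}$-abacus exactly right — choosing the correct ``charge'' so that the empty partition maps to the empty partition and the quadratic forms line up coefficient-for-coefficient — is the delicate bookkeeping, and it is precisely here that one must be careful about the distinguished residue class $0$ and the fact that $t$ odd means there is no runner $t/2$ to worry about. Everything else (injectivity, surjectivity, the $t$-core characterization via full runners) is standard abacus combinatorics.
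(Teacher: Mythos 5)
Your overall strategy---parametrize self-conjugate $t$-cores by antisymmetric integer vectors coming from the $t$-abacus (equivalently, the Garvan--Kim--Stanton $t$-tuples $(a_0,\ldots,a_{\frac{t-3}{2}},0,-a_{\frac{t-3}{2}},\ldots,-a_0)$), parametrize $\bar t$-cores by Olsson's $\frac{t-1}{2}$-tuples of integers, and then identify the two copies of $\mathbb{Z}^{\frac{t-1}{2}}$---is essentially the paper's proof, and that part is sound.

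However, your step (3) contains a genuine error: you claim that the two size functions are given by the same quadratic form, so that the bijection ``automatically preserves $n=|\lambda|=|\mu|$.'' This is false, and the ``key computation'' you single out cannot be carried out. For $t=3$, the self-conjugate $3$-core with parameter $a_0=2$ is $(4,2,1,1)$, of size $3a_0^2-2a_0=8$, while the $\bar 3$-core with parameter $b_1'=2$ is $(4,1)$, of size $\tfrac{3b^2-b}{2}=5$; this is precisely the paper's Example~\ref{GarveyMarcus}. More globally, the generating functions $F^*_t$ and $F_{\bar t}$ recorded in Section~1.2 are different power series (already their sets of exponents with nonzero coefficients differ for $t=3$), so no size-preserving bijection can exist at all. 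Fortunately the theorem only asserts a correspondence between the two sets of partitions, not a size-preserving one, so your argument is salvageable by simply deleting the size-matching step and taking $\zeta$ to be the identification of the two integer-vector parametrizations; but as proposed, the step you describe as the heart of the proof is a step that would fail.
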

\subsection{Generating functions}
Euler determined the generating function $P$ for the number $p(n)$ of partitions of $n$: 
\[
P(x):=\sum^{\infty}_{n=0}p(n)x^n=\prod_{n=1}^\infty\frac{1}{1-x^n}
\]
The generating function $F_t$ for the number $f_t(n)$ of $t$-core partitions of $n$ was first obtained by J. B. Olsson \cite{O0}.
$$F_{{t}}(x)=\sum^{\infty}_{n=0}f_t(n)x^n=\prod^{\infty}_{n=1}\frac{(1-x^{tn})^t}{(1-x^n)}$$
F. Garvan, D. Kim, and D. Stanton (Eqs. 7.1(a) and (b) in \cite{GKS}) first described the generating function $F^*_t$ for the number $f^*_t(n)$ of self-conjugate $t$-core partitions of $n$. We use a formulation that appears in \cite{BDFS}.
\[   F^*_{t}(x)=\sum^{\infty}_{n=0}f^*_t(n)x^n=\left\{
\begin{array}{ll}
      \prod^{\infty}_{n=1}(1-x^{2tn})^{\frac{t}{2}}(1+x^{2n-1}) & \mbox{if $t$ is even,}\\\\
      \prod^{\infty}_{n=1}(1-x^{2tn})^{\frac{t-1}{2}}(\frac{1+x^{2n-1}}{1+x^{t(2n-1)}}) & \mbox{if $t$ is odd.} \\
\end{array} 
\right. \]
The generating function $F_{\bar{t}}$ for the number $f_{\bar{t}}(n)$ of $\bar{t}$-core partitions of $n$ was also found by Olsson \cite{O3}.
$$F_{\bar{t}}(x)=\sum^{\infty}_{n=0}f_{\bar{t}}(n)x^n=\dis \prod_{n=1}^{\infty}\frac{(1-x^{2n})(1-x^{tn})^{\frac{t+1}{2}}}{(1-x^n)(1-x^{2tn})}$$

\subsection{$(s,t)$-core partitions, $g=1$.}
A {\it simultaneous core partition} is a partition whose set of hook lengths avoids a specified subset of positive integers. An $(s,t)$-core partition is both an $s$-core and a $t$-core partition, where $s,t>1$.\\
 
J. Anderson (Section 3, \cite{An}) describes, when $g=1$, the possible first-column hook lengths of an $(s,t)$-core partition.  Recall a lattice path is {\it monotonic} if it only moves one position up or one position to the right at every step. Then, in particular, Anderson's results imply a correspondence between $(s,t)$-cores and a family of monotonic paths that stay above the diagonal in a certain $s\times t$ lattice, constructed in the following way: The left-and-topmost box, with upper-left corner labeled $(0,s)$, contains the value $st-s-t$. One horizontal position right from $(0,s)$ is the box whose upper left corner is labeled $(1,s)$, and the value in that box is $st-2s-t$ (a reduction by $s$). One vertical position down from $(0,s)$ is the box with label $(0,s-1)$; the value inside is $st-s-2t$. Complete the lattice coordinates with their values in this fashion. 

\begin{example} Let $s=7$ and $t=11$. Anderson's lattice is shown below. The monotonic path $\pi$ appearing in the lattice corresponds to the $(7,11)$-core partition $\lambda_{\pi}$ whose first-column hook lengths $\{h_{\iota1}\}_{1\leq \iota\leq 5}=\{13,10,9,8,6,5,3,2,1\}$ are the values trapped between the (solid) path and the (dashed) $\pm$border. This corresponds to the partition $\lambda_{\pi}=(5,3^3,2^2,1^3).$
\end{example}

\smallskip

\begin{center}
\scalebox{0.25}{\begin{tikzpicture}
\draw (0cm,12cm) node [rectangle, minimum size=2cm, inner sep=0pt, draw, anchor=south west] {\Huge$59$};
\draw (2cm,12cm) node [rectangle, minimum size=2cm, inner sep=0pt, draw, anchor=south west] {\Huge$52$};
\draw (4cm,12cm) node [rectangle, minimum size=2cm, inner sep=0pt, draw, anchor=south west] {\Huge$45$};
\draw (6cm,12cm) node [rectangle, minimum size=2cm, inner sep=0pt, draw, anchor=south west] {\Huge$38$};
\draw (8cm,12cm) node [rectangle, minimum size=2cm, inner sep=0pt, draw, anchor=south west] {\Huge$31$};
\draw (10cm,12cm) node [rectangle, minimum size=2cm, inner sep=0pt, draw, anchor=south west] {\Huge$24$};
\draw (12cm,12cm) node [rectangle, minimum size=2cm, inner sep=0pt, draw, anchor=south west] {\Huge$17$};
\draw (14cm,12cm) node [rectangle, minimum size=2cm, inner sep=0pt, draw, anchor=south west] {\Huge$10$};
\draw (16cm,12cm) node [rectangle, minimum size=2cm, inner sep=0pt, draw, anchor=south west] {\Huge$3$};
\draw (18cm,12cm) node [rectangle, minimum size=2cm, inner sep=0pt, draw, anchor=south west] {\Huge$-4$};
\draw (20cm,12cm) node [rectangle, minimum size=2cm, inner sep=0pt, draw, anchor=south west] {\Huge$-11$};

\draw (0cm,10cm) node [rectangle, minimum size=2cm, inner sep=0pt, draw, anchor=south west] {\Huge$48$};
\draw (2cm,10cm) node [rectangle, minimum size=2cm, inner sep=0pt, draw, anchor=south west] {\Huge$41$};
\draw (4cm,10cm) node [rectangle, minimum size=2cm, inner sep=0pt, draw, anchor=south west] {\Huge$34$};
\draw (6cm,10cm) node [rectangle, minimum size=2cm, inner sep=0pt, draw, anchor=south west] {\Huge$27$};
\draw (8cm,10cm) node [rectangle, minimum size=2cm, inner sep=0pt, draw, anchor=south west] {\Huge$20$};
\draw (10cm,10cm) node [rectangle, minimum size=2cm, inner sep=0pt, draw, anchor=south west] {\Huge$13$};
\draw (12cm,10cm) node [rectangle, minimum size=2cm, inner sep=0pt, draw, anchor=south west] {\Huge$6$};
\draw (14cm,10cm) node [rectangle, minimum size=2cm, inner sep=0pt, draw, anchor=south west] {\Huge$-1$};
\draw (16cm,10cm) node [rectangle, minimum size=2cm, inner sep=0pt, draw, anchor=south west] {\Huge$-8$};
\draw (18cm,10cm) node [rectangle, minimum size=2cm, inner sep=0pt, draw, anchor=south west] {\Huge$-15$};
\draw (20cm,10cm) node [rectangle, minimum size=2cm, inner sep=0pt, draw, anchor=south west] {\Huge$-22$};

\draw (0cm,8cm) node [rectangle, minimum size=2cm, inner sep=0pt, draw, anchor=south west] {\Huge$37$};
\draw (2cm,8cm) node [rectangle, minimum size=2cm, inner sep=0pt, draw, anchor=south west] {\Huge$30$};
\draw (4cm,8cm) node [rectangle, minimum size=2cm, inner sep=0pt, draw, anchor=south west] {\Huge$23$};
\draw (6cm,8cm) node [rectangle, minimum size=2cm, inner sep=0pt, draw, anchor=south west] {\Huge$16$};
\draw (8cm,8cm) node [rectangle, minimum size=2cm, inner sep=0pt, draw, anchor=south west] {\Huge$9$};
\draw (10cm,8cm) node [rectangle, minimum size=2cm, inner sep=0pt, draw, anchor=south west] {\Huge$2$};
\draw (12cm,8cm) node [rectangle, minimum size=2cm, inner sep=0pt, draw, anchor=south west] {\Huge$-5$};
\draw (14cm,8cm) node [rectangle, minimum size=2cm, inner sep=0pt, draw, anchor=south west] {\Huge$-12$};
\draw (16cm,8cm) node [rectangle, minimum size=2cm, inner sep=0pt, draw, anchor=south west] {\Huge$-19$};
\draw (18cm,8cm) node [rectangle, minimum size=2cm, inner sep=0pt, draw, anchor=south west] {\Huge$-26$};
\draw (20cm,8cm) node [rectangle, minimum size=2cm, inner sep=0pt, draw, anchor=south west] {\Huge$-33$};

\draw (0cm, 6cm) node [rectangle, minimum size=2cm, inner sep=0pt, draw, anchor=south west] {\Huge$26$};
\draw (2cm, 6cm) node [rectangle, minimum size=2cm, inner sep=0pt, draw, anchor=south west] {\Huge$19$};
\draw (4cm, 6cm) node [rectangle, minimum size=2cm, inner sep=0pt, draw, anchor=south west] {\Huge$12$};
\draw (6cm, 6cm) node [rectangle, minimum size=2cm, inner sep=0pt, draw, anchor=south west] {\Huge$5$};
\draw (8cm, 6cm) node [rectangle, minimum size=2cm, inner sep=0pt, draw, anchor=south west] {\Huge$-2$};
\draw (10cm, 6cm) node [rectangle, minimum size=2cm, inner sep=0pt, draw, anchor=south west] {\Huge$-9$};
\draw (12cm, 6cm) node [rectangle, minimum size=2cm, inner sep=0pt, draw, anchor=south west] {\Huge$-16$};
\draw (14cm, 6cm) node [rectangle, minimum size=2cm, inner sep=0pt, draw, anchor=south west] {\Huge$-23$};
\draw (16cm, 6cm) node [rectangle, minimum size=2cm, inner sep=0pt, draw, anchor=south west] {\Huge$-30$};
\draw (18cm, 6cm) node [rectangle, minimum size=2cm, inner sep=0pt, draw, anchor=south west] {\Huge$-37$};
\draw (20cm, 6cm) node [rectangle, minimum size=2cm, inner sep=0pt, draw, anchor=south west] {\Huge$-44$};

\draw (0cm, 4cm) node [rectangle, minimum size=2cm, inner sep=0pt, draw, anchor=south west] {\Huge$15$};
\draw (2cm, 4cm) node [rectangle, minimum size=2cm, inner sep=0pt, draw, anchor=south west] {\Huge$8$};
\draw (4cm, 4cm) node [rectangle, minimum size=2cm, inner sep=0pt, draw, anchor=south west] {\Huge$1$};
\draw (6cm, 4cm) node [rectangle, minimum size=2cm, inner sep=0pt, draw, anchor=south west] {\Huge$-6$};
\draw (8cm, 4cm) node [rectangle, minimum size=2cm, inner sep=0pt, draw, anchor=south west] {\Huge$-13$};
\draw (10cm, 4cm) node [rectangle, minimum size=2cm, inner sep=0pt, draw, anchor=south west] {\Huge$-20$};
\draw (12cm, 4cm) node [rectangle, minimum size=2cm, inner sep=0pt, draw, anchor=south west] {\Huge$-27$};
\draw (14cm, 4cm) node [rectangle, minimum size=2cm, inner sep=0pt, draw, anchor=south west] {\Huge$-34$};
\draw (16cm, 4cm) node [rectangle, minimum size=2cm, inner sep=0pt, draw, anchor=south west] {\Huge$-41$};
\draw (18cm, 4cm) node [rectangle, minimum size=2cm, inner sep=0pt, draw, anchor=south west] {\Huge$-48$};
\draw (20cm, 4cm) node [rectangle, minimum size=2cm, inner sep=0pt, draw, anchor=south west] {\Huge$-55$};

\draw (0cm, 2cm) node [rectangle, minimum size=2cm, inner sep=0pt, draw, anchor=south west] {\Huge$4$};
\draw (2cm, 2cm) node [rectangle, minimum size=2cm, inner sep=0pt, draw, anchor=south west] {\Huge$-3$};
\draw (4cm, 2cm) node [rectangle, minimum size=2cm, inner sep=0pt, draw, anchor=south west] {\Huge$-10$};
\draw (6cm, 2cm) node [rectangle, minimum size=2cm, inner sep=0pt, draw, anchor=south west] {\Huge$-17$};
\draw (8cm, 2cm) node [rectangle, minimum size=2cm, inner sep=0pt, draw, anchor=south west] {\Huge$-24$};
\draw (10cm, 2cm) node [rectangle, minimum size=2cm, inner sep=0pt, draw, anchor=south west] {\Huge$-31$};
\draw (12cm, 2cm) node [rectangle, minimum size=2cm, inner sep=0pt, draw, anchor=south west] {\Huge$-38$};
\draw (14cm, 2cm) node [rectangle, minimum size=2cm, inner sep=0pt, draw, anchor=south west] {\Huge$-45$};
\draw (16cm, 2cm) node [rectangle, minimum size=2cm, inner sep=0pt, draw, anchor=south west] {\Huge$-52$};
\draw (18cm, 2cm) node [rectangle, minimum size=2cm, inner sep=0pt, draw, anchor=south west] {\Huge$-59$};
\draw (20cm, 2cm) node [rectangle, minimum size=2cm, inner sep=0pt, draw, anchor=south west] {\Huge$-66$};

\draw (0cm, 0cm) node [rectangle, minimum size=2cm, inner sep=0pt, draw, anchor=south west] {\Huge$-7$};
\draw (2cm, 0cm) node [rectangle, minimum size=2cm, inner sep=0pt, draw, anchor=south west] {\Huge$-14$};
\draw (4cm, 0cm) node [rectangle, minimum size=2cm, inner sep=0pt, draw, anchor=south west] {\Huge$-21$};
\draw (6cm, 0cm) node [rectangle, minimum size=2cm, inner sep=0pt, draw, anchor=south west] {\Huge$-28$};
\draw (8cm, 0cm) node [rectangle, minimum size=2cm, inner sep=0pt, draw, anchor=south west] {\Huge$-35$};
\draw (10cm, 0cm) node [rectangle, minimum size=2cm, inner sep=0pt, draw, anchor=south west] {\Huge$-42$};
\draw (12cm, 0cm) node [rectangle, minimum size=2cm, inner sep=0pt, draw, anchor=south west] {\Huge$-49$};
\draw (14cm, 0cm) node [rectangle, minimum size=2cm, inner sep=0pt, draw, anchor=south west] {\Huge$-56$};
\draw (16cm, 0cm) node [rectangle, minimum size=2cm, inner sep=0pt, draw, anchor=south west] {\Huge$-63$};
\draw (18cm, 0cm) node [rectangle, minimum size=2cm, inner sep=0pt, draw, anchor=south west] {\Huge$-70$};
\draw (20cm, 0cm) node [rectangle, minimum size=2cm, inner sep=0pt, draw, anchor=south west] {\Huge$-77$};

\draw[line width=8pt](0,0)--(0,2)--(2,2)--(2,6)--(4,6)--(6,6)--(6,8)--(8,8)--(8,10)--(10,10)--(10,12)--(14,12)--(14,14)--(22,14);
\draw[line width=12pt,dashed](0,0)--(0,2)--(2,2)--(2,4)--(6,4)--(6,6)--(8,6)--(8,8)--(12,8)--(12,10)--(14,10)--(14,12)--(18,12)--(18,14)--(22,14);
\draw[line width=5pt](0,0)--(22,14);

\filldraw[black] (0,0) circle (0.25cm);

\filldraw[black] (22,14) circle (0.25cm);
\end{tikzpicture}}
\end{center}

Anderson uses this to prove that the number of $(s,t)$-core partitions is $\frac{1}{(s+t)}\binom{s+t}{t}$.  J. B. Olsson and D. Stanton \cite{OS} showed that the largest size of such an $(s,t)$-core partition is $\frac{(s^2-1)(t^2-1)}{24}$, verifying a conjecture of B. Kane \cite{Bk}. 

\subsection{$(s,t)^*$-core and $(\bar{s},\bar{t})$-core partitions, $g=1$.}
Let $s,t>1$ be odd. We use the notation $(s,t)^*$-core partition to indicate an $(s,t)$-core partition that is also self-conjugate. B. Ford, H. Mai and L. Sze \cite{fms} construct a bijection between $(s,t)^*$-core partitions and monotonic paths in a certain ${\left\lfloor\frac{s}{2}\right\rfloor}\times {\left\lfloor\frac{t}{2}\right\rfloor}$ lattice which we call the {\it diagonal hooks diagram}. As a consequence, they find that, when $g=1$, the number of self-conjugate $(s,t)$-core partitions is $${\binom{{\left\lfloor\frac{s}{2}\right\rfloor}+{\left\lfloor\frac{t}{2}\right\rfloor}}{{\left\lfloor\frac{s}{2}\right\rfloor}}}.$$

A bar-partition is said to be an $(\s,\bar{t})$-core if it is both an $\s$-core and a $\bar{t}$-core. In the case that $s,t>1$ are odd and $g=1$, C. Bessenrodt and J. B. Olsson \cite{BO} construct a bijection between $(\bar{s},\bar{t})$-core partitions and monotonic paths in a $\frac{s-1}{2}\times\frac{t-1}{2}$ array, called the {\it Yin-Yang diagram}. As a consequence they find the number of $(\bar{s},\bar{t})$-core partitions and the size of the $(\bar{s},\bar{t})$-core which ``contains" all others.

We outline both the Ford-Mai-Sze and the Bessenrodt-Olsson constructions in Section 4.2.

\subsection{$(s,t)$-core and $(s,t)^*$-core partitions, $g>1$.}
When $g>1$, there are infinitely many $(s,t)$-cores, self-conjugate $(s,t)$-cores and $(\bar{s},\bar{t})$-cores, so there is no largest such partition. However, some properties of $(s,t)$-cores for $g=1$ remain true when $g>1$ (e.g. \cite{GN12} and \cite{Na07} generalize results in \cite{O4}). D. Aukerman, B. Kane and L. Sze first characterized an $(s,t)$-core partition $\la$ in terms of its $g$-core ${\mathcal Cor}_g(\la)$ and $g$-quotient ${\mathcal Quo}_g(\la)$, which we define formally in Section 2.1.

\begin{theorem} \cite[Theorem 1.3]{AKS}\label{stcore}
Let $\la$ be any integer partition, $s=s'g$ and $t=t'g$, and let ${\mathcal Quo}_g(\la)=(\la_1, \, \ldots , \, \la_g)$ be the $g$-quotient of $\la$. Then $\la$ is an $(s,t)$-core if and only if $\la_1, \, \ldots , \, \la_g$ are $(s',t')$-cores.
\end{theorem}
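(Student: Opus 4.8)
\medskip

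\noindent\textbf{Proof proposal.} The plan is to deduce Theorem~\ref{stcore} from the single classical fact underpinning the $g$-quotient construction: at the level of beta-sets, the hook lengths of $\la$ that are divisible by $g$ are, after division by $g$, exactly the hook lengths occurring among the components of $\quo_g(\la)$.

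First I would fix a beta-set $Y$ for $\la$ (a finite set of integers encoding the first-column hook lengths of a suitably padded $\la$) with $|Y|$ a multiple of $g$, and recall the abacus picture: distributing the beads of $Y$ onto a $g$-runner abacus according to residue modulo $g$ and reading runner $i$ as a beta-set yields the component $\la_i$. The elementary point is that the multiset of hook lengths of a partition with beta-set $Z$ equals $\{\, z - w : z \in Z,\ w \notin Z,\ w < z \,\}$, and that $z - w$ is divisible by $g$ precisely when $z$ and $w$ lie on the same runner. Keeping only same-runner pairs and dividing by $g$ then gives the multiset identity
\[
\{\, h/g \;:\; h \text{ a hook length of } \la,\ g \mid h \,\} \;=\; \bigsqcup_{i=0}^{g-1} \{\, h : h \text{ a hook length of } \la_i \,\}.
\]
I would state this as a lemma or simply cite it (James--Kerber, Olsson), since it is not new.

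Granting the identity, the rest is a one-line deduction. Because $s = s'g$ and $t = t'g$ are themselves multiples of $g$, the identity says that $\la$ has a hook of length $s$ if and only if $s'$ occurs as a hook length of some $\la_i$, and likewise $\la$ has a hook of length $t$ if and only if $t'$ occurs as a hook length of some $\la_i$. Hence $\la$ has neither a hook of length $s$ nor a hook of length $t$ if and only if no $\la_i$ has a hook of length $s'$ and no $\la_i$ has a hook of length $t'$; rearranging the quantifiers, this holds precisely when every $\la_i$ is at once an $s'$-core and a $t'$-core, i.e. an $(s',t')$-core. Empty components are vacuously $(s',t')$-cores, so no case analysis is required, and since the property ``every component is an $(s',t')$-core'' does not depend on the order of $(\la_1,\dots,\la_g)$ (equivalently, on the chosen size of $Y$), the statement is well posed.

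The only genuine content is the hook-length dictionary between $\la$ and $\quo_g(\la)$, so that is the step to get exactly right: in particular, that what is being tracked is the hook lengths of $\la$ \emph{divisible by} $g$ rather than merely hooks of length exactly $g$, and that the correspondence is multiset-preserving, since it is invoked once with the length $s = s'g$ and once with the length $t = t'g$. As all of this is classical, the argument itself is brief; the value of recording it is the observation that Theorem~\ref{stcore} is precisely the specialization of the $g$-quotient's hook-length dictionary to the two lengths $s$ and $t$.
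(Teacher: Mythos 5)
Your proposal is correct and follows essentially the same route as the paper: both arguments reduce the statement to the classical hook-length dictionary between $\la$ and ${\mathcal Quo}_g(\la)$ (the paper's Proposition~\ref{hookbij}, which you re-derive via beta-sets and the $g$-runner abacus) and then specialize it to the lengths $s=s'g$ and $t=t'g$. The only cosmetic difference is that the paper phrases the reduction through ``no hook of length divisible by $s$'' and quantifies over all multiples $ks$, whereas you apply the multiset identity directly at the exact lengths $s$ and $t$; the underlying lemma and the logic are identical.
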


They obtain, as a corollary, a generating function $\Psi_{s,t}$ for the number of $(s,t)$-core partitions of $n$. 
\begin{corollary} \cite[Corollary 4.3]{AKS} 
\label{genfunstcore} 
With the above notation, we have
$$\Psi_{s,t}(x)= \Psi_{s',t'}(x^g)^g  F_g(x) =\Psi_{s',t'}(x^g)^g  \prod_{n=1}^{\infty}\frac{(1-x^{gn})^g}{1-x^n},$$
where $F_g$ is the generating function for the number of $g$-core partitions.
\end{corollary}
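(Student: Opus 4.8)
The plan is to combine Theorem~\ref{stcore} with the fundamental bijection underlying the $g$-core/$g$-quotient construction (to be set up in Section~2.1), which associates to every partition $\la$ the pair $(\cor_g(\la), \quo_g(\la))$, where $\cor_g(\la)$ ranges over all $g$-core partitions and $\quo_g(\la)=(\la_1,\ldots,\la_g)$ ranges over all $g$-tuples of partitions, and which is \emph{size-additive}:
\[
|\la| \;=\; |\cor_g(\la)| \;+\; g\sum_{i=1}^{g}|\la_i|.
\]
First I would observe that, by Theorem~\ref{stcore}, the set of $(s,t)$-cores corresponds under this bijection exactly to those pairs whose first coordinate is an \emph{arbitrary} $g$-core and whose second coordinate is a $g$-tuple of $(s',t')$-cores; the $g$-core component is entirely unconstrained precisely because being an $(s,t)$-core is read off from the quotient alone.

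Next I would translate this decomposition into generating functions. Summing $x^{|\la|}$ over all $(s,t)$-cores $\la$ and applying the size formula, the sum factors as the product of the generating function for $g$-cores — which, by Olsson's formula specialized to $t=g$, is $F_g(x)=\prod_{n=1}^{\infty}(1-x^{gn})^g/(1-x^n)$ — times the contribution of the quotient. Since the $g$ components of the quotient contribute independently, and each component $\mu$ contributes a factor $\sum_{\mu\ \text{an}\ (s',t')\text{-core}}x^{g|\mu|}=\Psi_{s',t'}(x^g)$, the total quotient contribution is $\Psi_{s',t'}(x^g)^g$. Multiplying yields
\[
\Psi_{s,t}(x) \;=\; \Psi_{s',t'}(x^g)^g\, F_g(x),
\]
as claimed.

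The only point requiring care is the bookkeeping of the extra factor $g$ in the exponent: each box of a quotient component corresponds to a $g$-ribbon in $\la$ and so contributes $g$ (not $1$) to $|\la|$, which is exactly why the quotient generating variable must be $x^g$. Beyond that, the argument is formal: no new combinatorial input about $(s',t')$-cores is needed, so the $g>1$ statement is a genuine consequence of the $g=1$ theory together with the $g$-core/$g$-quotient bijection and its size additivity.
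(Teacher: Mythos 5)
Your proposal is correct and follows essentially the same route as the paper: both rest on the core--quotient bijection (Lemma~\ref{2.1}) together with Theorem~\ref{stcore} to see that an $(s,t)$-core is exactly an arbitrary $g$-core paired with a $g$-tuple of $(s',t')$-cores, and both then convert the size-additive decomposition into the product $\Psi_{s',t'}(x^g)^g F_g(x)$. The only cosmetic difference is that the paper records the intermediate convolution identity $\psi_{s,t}(n)=\sum_{w\geq 0} Q^{(s',t')}_g(w)f_g(n-gw)$ before passing to generating functions, whereas you factor the generating function directly; the substance is identical.
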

They also find the generating function for the number of self-conjugate $(s,t)$-cores when $g>1$.

\begin{theorem} \cite[Theorem 7.1]{AKS}
\label{genfunselfstcore}
The generating function $\Psi^*_{s,t}$ for the number of self-conjugate $(s,t)$-cores with $g>1$ is given by
$$\Psi^*_{s,t} (x)= \left\{ \begin{array}{ll}F^*_{g}(x)(\Psi_{s',t'}(x^{2g}))^{\frac{g}{2}} &  \mbox{if g is even,}\\
F^*_{g}(x)(\Psi_{s',t'}(x^{2g}))^{\frac{g-1}{2}}\Psi^*_{s',t'}(x^g) &  \mbox{if g is odd.}
\end{array}
\right.$$
\end{theorem}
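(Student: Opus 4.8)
The plan is to combine Theorem~\ref{stcore} with the standard description of how the $g$-core and $g$-quotient transform under conjugation. I would start from three classical facts about the map $\la \mapsto (\cor_g(\la), \quo_g(\la))$: it is a bijection from partitions onto pairs (a $g$-core, a $g$-tuple of partitions); it satisfies the size identity $|\la| = |\cor_g(\la)| + g\,(|\la_1| + \cdots + |\la_g|)$ when $\quo_g(\la) = (\la_1, \ldots, \la_g)$; and conjugation acts by $\cor_g(\la^\vee) = \cor_g(\la)^\vee$ together with an order-reversal-and-conjugation on the quotient, which in the labelling of Theorem~\ref{stcore} reads $(\la^\vee)_i = (\la_{g+1-i})^\vee$. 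I would also record at the outset that conjugation preserves the multiset of hook lengths, so $\mu$ is an $(s',t')$-core if and only if $\mu^\vee$ is, and that $\cor_g(\la)$, being a $g$-core with $g \mid s$ and $g \mid t$, is automatically an $(s,t)$-core (apply Theorem~\ref{stcore} to $\cor_g(\la)$, whose $g$-quotient is the trivial tuple).

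From these ingredients I would first extract a structural description: $\la$ is a self-conjugate $(s,t)$-core if and only if $\cor_g(\la)$ is a self-conjugate $g$-core, each $\la_i$ is an $(s',t')$-core (this last being exactly the content of Theorem~\ref{stcore}), and $\la_i = (\la_{g+1-i})^\vee$ for every $i$ (the translation of $\la = \la^\vee$ through the bijection).

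Next I would convert this into a generating function by selecting free parameters among $\la_1, \ldots, \la_g$, using that the index pairing $i \leftrightarrow g+1-i$ is a fixed-point-free involution when $g$ is even and an involution with one fixed point when $g$ is odd (and that, whatever indexing convention is in force, this orbit structure is the same). If $g$ is even there are $g/2$ pairs and no fixed index: one may let $\la_1, \ldots, \la_{g/2}$ range independently over all $(s',t')$-cores, whereupon $\la_{g+1-i} = \la_i^\vee$ is forced and is automatically an $(s',t')$-core. Each pair contributes size $g(|\la_i| + |\la_i^\vee|) = 2g|\la_i|$, hence a factor $\Psi_{s',t'}(x^{2g})$; the self-conjugate $g$-core contributes $F^*_g(x)$; and multiplying gives $\Psi^*_{s,t}(x) = F^*_g(x)\,\Psi_{s',t'}(x^{2g})^{g/2}$. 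If $g$ is odd the pairing fixes the middle index $m = (g+1)/2$, for which $\la_m = \la_m^\vee$ forces $\la_m$ to be a self-conjugate $(s',t')$-core, contributing a factor $\Psi^*_{s',t'}(x^g)$ (weight $x^{g|\la_m|}$), while the remaining $(g-1)/2$ pairs behave as before; this yields the second case of the theorem.

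The genuinely delicate point is bookkeeping rather than ideas: pinning down the conjugation rule for $\quo_g$ with the correct indexing convention (several incompatible ones appear in the literature) and then verifying that the substitution variable comes out as exactly $x^{2g}$ — a factor $g$ from the $g$-quotient size formula and a factor $2$ from pairing each $\la_i$ with its conjugate. The remaining steps — that conjugates of cores are cores, that a $g$-core is automatically an $(s,t)$-core, the count of pairs, and the substitution $x \mapsto x^g$ on the self-conjugate middle block — are routine.
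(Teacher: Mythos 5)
Your proposal is correct and follows essentially the same route as the paper: the authors likewise characterize self-conjugate $(s,t)$-cores via the conjugation rule on the $g$-core and $g$-quotient (their Lemmas \ref{nec0} and \ref{nec}, the former quoted from \cite{HN}), then split into the even case with $g/2$ free conjugate pairs and the odd case with an additional self-conjugate middle component, obtaining exactly the factors $F^*_g(x)$, $\Psi_{s',t'}(x^{2g})^{g/2}$ or $\Psi_{s',t'}(x^{2g})^{(g-1)/2}\Psi^*_{s',t'}(x^g)$ that you describe. The only difference is cosmetic indexing of the quotient components.
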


Aukerman, Kane, and Sze also obtain a result on the number of $(s,t)$-core partitions which are not $g$-cores, and the existence of a $t$-core partition of $n$ that is not a $g$-core.

\begin{theorem}\cite[Corollary 4.4]{AKS}
\label{infinitelymanystcores}
If $g>1$, $s'>1$ and $t'>1$, then there are infinitely many $(s'g,t'g)$-cores which are not $g$-cores.
\end{theorem}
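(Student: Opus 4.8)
The plan is to combine Theorem~\ref{stcore} with the bijectivity of the $g$-core/$g$-quotient construction. Recall that every partition $\la$ is determined by the pair $({\mathcal Cor}_g(\la),{\mathcal Quo}_g(\la))$, that every such pair --- a $g$-core together with an arbitrary $g$-tuple of partitions --- arises, and that $\la$ is a $g$-core precisely when ${\mathcal Quo}_g(\la)=(\emptyset,\ldots,\emptyset)$. Hence, to produce $(s,t)$-cores that are \emph{not} $g$-cores it is enough to exhibit infinitely many partitions whose $g$-quotients are nontrivial while all of their $g$-quotient components are $(s',t')$-cores: Theorem~\ref{stcore} then forces each such partition to be an $(s,t)$-core, and nontriviality of the $g$-quotient forces it not to be a $g$-core.

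First I would observe that, since $s'>1$ and $t'>1$, the one-box partition $(1)$ --- whose only hook length is $1$ --- is an $(s',t')$-core, and so is $\emptyset$; thus the $g$-tuple $\bigl((1),\emptyset,\ldots,\emptyset\bigr)$ consists entirely of $(s',t')$-cores. Then, for each $g$-core $\ga$, let $\la_\ga$ be the unique partition with ${\mathcal Cor}_g(\la_\ga)=\ga$ and ${\mathcal Quo}_g(\la_\ga)=\bigl((1),\emptyset,\ldots,\emptyset\bigr)$. By Theorem~\ref{stcore}, $\la_\ga$ is an $(s,t)$-core; it is not a $g$-core, since its $g$-quotient is nontrivial; and the assignment $\ga\mapsto\la_\ga$ is injective, because $\ga={\mathcal Cor}_g(\la_\ga)$ is recovered from $\la_\ga$. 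Consequently the number of $(s,t)$-cores that are not $g$-cores is at least the number of $g$-core partitions.

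It remains to note that there are infinitely many $g$-core partitions once $g\ge 2$. For $g\ge 4$ this is immediate from Theorem~\ref{Ono}; the cases $g=2$ (the $2$-cores being exactly the staircases $(k,k-1,\ldots,1)$) and $g=3$ are classical, and in general the $g$-abacus description --- or the observation that the coefficients of $F_g(x)=\prod_{n\ge 1}(1-x^{gn})^g/(1-x^n)$ do not eventually vanish --- shows that $g$-cores of arbitrarily large size exist. I expect this to be the only step requiring any separate care; the rest is routine bookkeeping with the $g$-quotient, and in fact any fixed nonempty $(s',t')$-core, placed in any coordinate, could be used in place of $(1)$.
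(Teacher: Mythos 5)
Your argument is essentially identical to the paper's: both take an arbitrary $g$-core $\ga$, build the unique partition with that $g$-core and $g$-quotient $\bigl((1),\emptyset,\ldots,\emptyset\bigr)$, invoke Theorem~\ref{stcore} to conclude it is an $(s,t)$-core, and observe that the nontrivial quotient prevents it from being a $g$-core while varying $\ga$ yields infinitely many distinct examples. Your added justification that infinitely many $g$-cores exist for all $g\ge 2$ is a small extra care the paper simply asserts; the proof is correct.
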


\begin{theorem}\label{yeahtnog} \cite[Corollary 1.2]{AKS} Let $g \geq 4$ be an integer. Then, for any integers $n \geq g$ and $t'>1$, there exists a partition of $n$ which is a $t'g$-core but not a $g$-core.
\end{theorem}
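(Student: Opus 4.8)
\noindent\emph{Proof idea.}
The plan is to produce the required partition explicitly through the $g$-core/$g$-quotient correspondence. Recall that this correspondence is a bijection
$$\la \;\longleftrightarrow\; \bigl({\mathcal Cor}_g(\la),\,{\mathcal Quo}_g(\la)\bigr),\qquad {\mathcal Quo}_g(\la)=(\la_1,\dots,\la_g),$$
satisfying $|\la|=|{\mathcal Cor}_g(\la)|+g\sum_{i=1}^{g}|\la_i|$. First I would recall the two standard properties of this correspondence (the same abacus machinery that will be used in Section 2.1): (i) $\la$ is a $g$-core if and only if ${\mathcal Quo}_g(\la)=(\emptyset,\dots,\emptyset)$; and (ii) the multiset of hook lengths of $\la$ that are divisible by $g$ is precisely $g$ times the union of the multisets of hook lengths of $\la_1,\dots,\la_g$. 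Since $g\mid t'g$, property (ii) yields that $\la$ is a $t'g$-core if and only if each $\la_i$ is a $t'$-core.

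Granting this, the construction is immediate. Fix $n\ge g$ and $t'>1$. Because $g\ge 4$, Theorem \ref{Ono} supplies a $g$-core partition $\mu$ with $|\mu|=n-g\ge 0$. Let $\la$ be the partition with ${\mathcal Cor}_g(\la)=\mu$ and ${\mathcal Quo}_g(\la)=\bigl((1),\emptyset,\dots,\emptyset\bigr)$. Then $|\la|=(n-g)+g=n$. Each $\la_i$ equals $(1)$ or $\emptyset$, and both are $t'$-cores since $t'\ge 2$; hence by (ii) the partition $\la$ is a $t'g$-core. On the other hand ${\mathcal Quo}_g(\la)\ne(\emptyset,\dots,\emptyset)$, so by (i) the partition $\la$ is not a $g$-core --- concretely, the single box of the component $(1)$ contributes to $\la$ a hook of length $g\cdot 1=g$. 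Thus $\la$ is a $t'g$-core of $n$ that is not a $g$-core.

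I do not expect a genuine obstacle here: the substantive content is carried entirely by the positivity statement (Theorem \ref{Ono}), which is exactly why $g\ge 4$ is assumed, together with the fact that the $g$-core and the $g$-quotient of a partition may be prescribed independently. The only points that need some care are the precise bookkeeping in property (ii) --- which I would either cite from the standard literature on the $g$-quotient or re-derive on the $g$-runner abacus, where lifting one bead by one position on a single runner visibly creates exactly one hook length divisible by $g$, namely $g$ itself, and $g\ne t'g$ because $t'>1$ --- and the observation that the hypothesis $n\ge g$ is genuinely needed, since a partition of size less than $g$ has no hook of length $\ge g$ and is therefore automatically a $g$-core, so the conclusion fails for $n<g$.
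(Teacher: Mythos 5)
Your proposal is correct and follows essentially the same route as the paper: the paper establishes this statement (in strengthened, quantitative form) via Theorem \ref{f}, whose proof decomposes partitions by $g$-weight using the $g$-core/$g$-quotient bijection, characterizes $t'g$-cores by their quotient components being $t'$-cores (Theorem \ref{stcore}/Proposition \ref{hookbij}), and invokes Theorem \ref{Ono} to guarantee a $g$-core of $n-wg$ exists --- exactly the ingredients you use. Your explicit witness with quotient $((1),\emptyset,\dots,\emptyset)$ is precisely one of the $Q^{t'}_g(1)\,f_g(n-g)=g\,f_g(n-g)$ partitions counted by the $w=1$ term of the paper's sum (and is the same construction the paper uses for Theorem \ref{infinitelymanystcores}, there without size control).
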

Aukerman, Kane and Sze do not consider $(\bar{s},\bar{t})$-core partitions with $g>1$; we consider them here using a construction of the first two authors \cite{GN12}.

[Note: All enumerating functions, generating functions, and bijections referred to in this paper are listed in an Index of Functions in Section 6.]

\section{Generating functions}

\subsection{$(s,t)$-cores with $g>1$}

We start with some relevant background. For more details we refer the reader to \cite[Section 3]{O3}. Recall a {\it t-core}, or {\it t-core partition}, is one whose multiset of hook lengths avoids $t$, or has no $t$-hooks. More generally, the {\it t-core of} $\lambda$, denoted ${\mathcal Cor}_t{(\la)}$, is obtained by removing successive $t$-hooks until no further $t$-hooks can be removed.  If a partition has $w$ such $t$-hooks, then the {\it t-quotient} ${\mathcal Quo}_t(\la)$ is an $t$-tuple $(\la_1, \, \ldots , \, \la_t)$ of partitions whose sizes add up to $w$. The integer $w$ is then called the {\emph{$t$-weight}} of $\la$, and ${\mathcal Cor}_t(\la)$ is a partition of $n-wt$. The fundamental property of ${\mathcal Quo}_t(\la)$ is that it contains all the information about the hooks of length divisible by $t$ in $\la.$ The following lemma and proposition are well-known.

\begin{lemma}\label{2.1}
If $\lambda$ is a partition of $n$, and if $t$ is any positive integer, then $\lambda$ is completely and uniquely determined by its {\emph{$t$-core}} ${\mathcal Cor}_t{(\la)}$ and {\emph{$t$-quotient}} ${\mathcal Quo}_t(\la)$.
\end{lemma}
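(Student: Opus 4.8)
The plan is to make the bijection explicit via the combinatorial model of $\beta$-sets (equivalently, first-column hook lengths shifted by a staircase), and then to realize the $t$-core/$t$-quotient decomposition as the passage to the $t$ runners of an abacus display. First I would fix a large integer $N$ (divisible by $t$, say) and encode $\la$ by its $\beta$-set of size $N$: the set $X = \{\,h_{\iota 1} + N - \iota : 1 \le \iota \le N\,\}$ of "shifted" first-column hook lengths, where we pad $\la$ with enough zero parts to have exactly $N$ rows. This $X$ is an $N$-element subset of $\mathbb{Z}_{\ge 0}$, and the discussion in Section 1.1 already notes that the first-column hook lengths determine $\la$; conversely any such $X$ of the correct "charge" comes from a unique partition, so $\la \leftrightarrow X$ is a bijection onto $N$-element subsets of $\mathbb{Z}_{\ge 0}$ of charge $\binom{N}{2}$.

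Next I would distribute the beads of $X$ onto $t$ runners according to residue mod $t$: writing each $x \in X$ uniquely as $x = t q + r$ with $0 \le r \le t-1$, let $X_r = \{\, q : tq + r \in X \,\}$. Pushing all beads on each runner as far down as possible (i.e. replacing $X_r$ by $\{0,1,\dots,|X_r|-1\}$ on that runner, keeping $|X_r|$ fixed) produces a new $\beta$-set, and the partition it encodes is precisely ${\mathcal Cor}_t(\la)$ — this is the standard fact that removing a $t$-hook corresponds to sliding a single bead down one position on its runner, so iterating until no bead can slide yields the $t$-core; the residues $(|X_0|, \dots, |X_{t-1}|)$ are preserved throughout and record the $t$-core. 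Meanwhile each $X_r$, read as a $\beta$-set in its own right, encodes a partition $\la_r$, and ${\mathcal Quo}_t(\la) = (\la_0,\dots,\la_{t-1})$ (up to the usual cyclic relabeling convention, which I would fix once and for all). The key point is that the map $X \mapsto (X_0, \dots, X_{t-1})$ is a bijection from $\beta$-sets to $t$-tuples of runner-configurations, hence $\la$ is recovered from the pair $\big({\mathcal Cor}_t(\la), {\mathcal Quo}_t(\la)\big)$ by: reading off $(|X_0|,\dots,|X_{t-1}|)$ from the core, forming each $X_r$ from $\la_r$ with the prescribed number of beads, reassembling $X$, and translating back to a partition. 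Existence and uniqueness both fall out of the bijectivity at each stage.

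The main obstacle — really the only delicate point — is bookkeeping: one must commit to a fixed normalization (the size $N$, the charge convention, and especially the cyclic convention identifying which runner is "$\la_1$") so that ${\mathcal Quo}_t(\la)$ is a genuinely well-defined ordered tuple rather than an unordered or cyclically-ambiguous one, and one must check that sliding beads down runners (a) does not change the multiset of residues and (b) does not alter the individual $\la_r$. Both checks are routine once the abacus formalism is set up, and the $N$-independence is handled by noting that increasing $N$ by $t$ simply adds one fully-packed bead to the bottom of every runner, changing neither the core nor the quotient. I would cite \cite[Section 3]{O3} for the abacus machinery and present only the normalization choices and the bijectivity statement in detail.
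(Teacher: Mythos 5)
Your argument is correct and is the standard $\beta$-set/abacus proof; the paper offers no proof of this lemma at all, simply labelling it well-known and deferring to \cite[Section 3]{O3}, which contains exactly the construction you describe (beads by residue class mod $t$, sliding beads down runners for the core, reading each runner as a partition for the quotient, bijectivity at every stage). The only slip is notational: the beads should be the $\beta$-numbers $\lambda_\iota + N - \iota$ (equivalently, the first-column hook lengths of $\lambda$ padded to exactly $N$ rows), not $h_{\iota 1} + N - \iota$, which shifts twice; this does not affect the structure or validity of the argument.
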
 

\begin{proposition} \label{hookbij}
For any integer $k \geq 1$, there is a canonical bijection between the set of hooks of length $kt$ in $\la$ and the set of hooks of length $k$ in ${\mathcal Quo}_t(\la)$, where a hook in the $t$-quotient is simply a hook in any of $\la_1, \, \ldots , \, \la_t$.
\end{proposition}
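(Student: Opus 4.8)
The plan is to pass to the abacus (beta-set) description of partitions that underlies Lemma~\ref{2.1}. Fix a sufficiently large integer $r$ divisible by $t$ and let $X=X(\la,r)=\{\la_i+r-i : 1\le i\le r\}\subseteq\Z_{\geq 0}$ be the associated beta-set (with $\la_i=0$ for $i$ exceeding the number of parts of $\la$). The standard combinatorial fact is that the hooks of $\la$ are in natural bijection with the pairs $(x,y)$ satisfying $x\in X$, $y\in\Z_{\geq 0}\setminus X$ and $x>y$, and that the hook corresponding to $(x,y)$ has length $x-y$; this correspondence is unaffected by enlarging $r$, since adding a new smallest bead sends $X$ to $\{0\}\cup(X+1)$ and changes nothing. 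Distributing the beads of $X$ over the $t$ runners, $X_i=\{a\in\Z_{\geq 0} : ta+i\in X\}$ for $0\le i<t$, yields beta-sets for the $t$ components $\la_1,\dots,\la_t$ of ${\mathcal Quo}_t(\la)$ --- this is exactly how the $t$-quotient is defined.

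Now observe that a hook of $\la$ has length divisible by $t$, say equal to $kt$, precisely when its associated pair $(x,y)$ satisfies $x\equiv y\pmod t$. Writing $x=ta+i$ and $y=tb+i$ with $0\le i<t$, the conditions $x\in X$, $y\notin X$, $x>y$ translate verbatim into $a\in X_i$, $b\in\Z_{\geq 0}\setminus X_i$, $a>b$, while the hook length becomes $x-y=t(a-b)$, i.e. $a-b=k$. Thus $(x,y)\mapsto(i;a,b)$ sends a hook of length $kt$ in $\la$ to a hook of length $k$ on the $i$-th runner, hence to a hook of length $k$ in one of $\la_1,\dots,\la_t$. The map is manifestly invertible (recover $(x,y)$ from $(i;a,b)$ via $x=ta+i$, $y=tb+i$), so it is the asserted bijection, and it is canonical because neither the hook--pair correspondence nor the runner decomposition depends on $r$: incrementing $r$ by $t$ simply shifts every $X_i$ up by $1$ and leaves all of these identifications intact.

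The only genuine content, and the part I would spell out most carefully, is the first displayed claim: that bead--gap pairs $(x,y)$ with $x>y$ enumerate the hooks of $\la$, with hook length $x-y$. This is James's abacus lemma; one proves it by identifying the box in row $p$ and column $q$ of $[\la]$ with the pair consisting of the $p$-th largest element of $X$ and the $q$-th nonnegative integer not in $X$, and checking that arm $+$ leg $+$ $1$ equals the difference of the two beta-numbers. Everything after that is bookkeeping: the residue-mod-$t$ splitting of $X$ is compatible with all of these identifications, which is precisely why removing a hook of length $kt$ from $\la$ amounts to removing a hook of length $k$ from one component of the $t$-quotient. I do not anticipate any serious obstacle beyond phrasing the construction so that its independence from the (inessential) choice of the number of beads is transparent.
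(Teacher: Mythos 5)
Your argument is correct and is precisely the standard abacus/beta-set proof: bead--gap pairs enumerate hooks with length given by the difference of beta-numbers, and the residue-mod-$t$ splitting into runners identifies hooks of length $kt$ with $k$-hooks of the quotient components. The paper itself gives no proof of this proposition --- it is stated as well-known, with details deferred to \cite[Section 3]{O3} --- and the argument found there is exactly the one you outline, so there is nothing to add beyond the care you already flag about fixing the number of beads divisible by $t$.
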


We now provide new, succinct proofs of Theorem \ref{stcore} and Corollary \ref{genfunstcore}. We note that the approach here provides the blueprint for many of the results that follow.
\begin{proof}[{\bf{Proof of Theorem \ref{stcore}}}]
Recall that $\la$ is an $s$-core if and only if $\la$ has no hook of length divisible by $s=s'g$. Now, by Proposition \ref{hookbij}, there is a bijection between the set of hooks of length $ks=ks'g$ in $\la$ and the set of hooks of length $ks'$ in ${\mathcal Quo}_g(\la)$ for any $k \geq 1$. Hence $\la$ is an $s$-core if and only if $\la_1, \, \ldots , \, \la_g$ are $s'$-cores. Similarly, $\la$ is a $t$-core if and only if $\la_1, \, \ldots , \, \la_g$ are $t'$-cores. The result follows.
\end{proof}

We write $Q^{(s',t')}_g(w)$ for the number of $g$-tuples of $(s',t')$-cores of weight $w$, and $\psi_{s,t}(n)$ for the number of $(s,t)$-cores of an integer $n$. 

\begin{proof}[{\bf{Proof of Corollary \ref{genfunstcore}}}]
The proof is similar to that of \cite[Proposition (9.4)(iii)]{O3}, and follows from a counting argument using Theorem \ref{stcore}. By Lemma \ref{2.1}, each $(s,t)$-core partition $\la$ of $n$ is completely and uniquely determined by its $g$-core ${\mathcal Cor}_g(\la)$ and $g$-quotient ${\mathcal Quo}_g(\la)$. And, by Theorem \ref{stcore}, if $\la$ has $g$-weight $w$, then ${\mathcal Quo}_g(\la)$ can be any $g$-tuple of $(s',t')$-cores of weight $w$.  We have
\begin{equation}\label{jade}
\psi_{s,t}(n)=\dis \sum_{w\geq 0} Q^{(s',t')}_g(w)f_g(n-gw).
\end{equation}
Recall that $F_g(x)$ is the generating function for the number of $g$-cores of $n$. Then, using an argument similar to Proposition 9.14(ii) in \cite{O3},
Equation (\ref{jade}) can be transformed into the generating function
$$\Psi_{s,t}(x)= \Psi_{s',t'}(x^g)^g  F_g(x),$$
as claimed. 
\end{proof} 

\subsection{$(s,t)^*$-cores with $g>1$}
We consider self-conjugate $(s,t)$-core partitions, or $(s,t)^*${\it -core} partitions. Our description of the core and quotient of a self-conjugate $(s,t)$-core partition is a consequence of the following lemma which appears in \cite{HN}.

\begin{lemma}\label{nec0} Let $t$ be any positive integer, $\lambda$ be a partition, and ${\mathcal Quo}_t(\la)=(\la_0, \ldots , \la_{t-1})$. Then $\lambda$ is self-conjugate if and only if ${\mathcal Cor}_t(\la)$ is self-conjugate and $\lambda^{\vee}_{i}=\lambda_{t-i-1}$ for all $0 \leq i \leq t-1$.
\end{lemma}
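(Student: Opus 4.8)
The plan is to prove both implications using the standard combinatorics of beta-sets (first-column hook lengths) and the explicit description of the $t$-quotient. First I would fix a large beta-set $X$ for $\lambda$ with $|X| \equiv 0 \pmod t$, say $|X| = tm$, and recall the standard fact that conjugation on partitions corresponds, on the level of a beta-set of size $N$, to the complementation $X \mapsto \{0,1,\dots,N-1\}\setminus\{N-1-x : x \in X\}$. The residues of the elements of $X$ modulo $t$ sort $X$ into $t$ subsets $X_0,\dots,X_{t-1}$, and after dividing by $t$ these become beta-sets $\beta_i$ for the components $\lambda_i$ of ${\mathcal Quo}_t(\lambda)$; the $t$-core ${\mathcal Cor}_t(\lambda)$ is recorded by the sizes $|X_i|$ (equivalently by how the $X_i$ fail to be ``packed''). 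The key bookkeeping step is to track what complementation of $X$ does to each residue class: an element $x = tq + i$ of $X$ has $N-1-x \equiv N-1-i \pmod t$, so with $N = tm$ we get $N-1-x \equiv (t-1-i) \pmod t$. Hence complementing $X$ sends the class $i$ to the class $t-1-i$, and within that class it acts (after dividing by $t$) as the complementation defining conjugation on the component partition. This simultaneously shows that the multiset of ``abacus runner positions'' is reversed --- which is exactly the statement that ${\mathcal Cor}_t(\lambda)$ is self-conjugate --- and that $\lambda^{\vee}$ has $t$-quotient $(\lambda^{\vee}_i)$ with $\lambda^{\vee}_i = \lambda_{t-1-i}$.

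Concretely I would carry out the steps in this order. First, recall/state the beta-set description of $\lambda$, of ${\mathcal Cor}_t(\lambda)$ and ${\mathcal Quo}_t(\lambda)$ via the $t$-abacus with $t$ runners (this is exactly the content behind Lemma \ref{2.1} and Proposition \ref{hookbij}), and recall the complementation formula for conjugation on a beta-set. Second, compute the effect of complementation on runner labels as above, getting the index reversal $i \mapsto t-1-i$ together with the internal complementation on each runner. Third, read off the two consequences: (a) the numbers of beads on the runners of $\lambda^{\vee}$ are those of $\lambda$ in reversed order, so ${\mathcal Cor}_t(\lambda^{\vee})$ is obtained from ${\mathcal Cor}_t(\lambda)$ by this same reversal; since a $t$-core is self-conjugate iff its abacus is symmetric under this reversal, $\lambda$ self-conjugate forces ${\mathcal Cor}_t(\lambda)$ self-conjugate, and conversely the reversal being an internal symmetry is part of what we must check; (b) on runner $i$, dividing by $t$ and comparing with runner $t-1-i$ of $\lambda$ shows $\lambda^{\vee}_i = \lambda_{t-1-i}$. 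Fourth, assemble: if $\lambda = \lambda^{\vee}$ then comparing quotients gives $\lambda^{\vee}_i = \lambda_i$, and combined with step (b) this is $\lambda^{\vee}_i = \lambda_{t-1-i}$, and ${\mathcal Cor}_t(\lambda)$ is self-conjugate by (a). Conversely, given ${\mathcal Cor}_t(\lambda)$ self-conjugate and $\lambda^{\vee}_i = \lambda_{t-1-i}$ for all $i$, steps (a)/(b) show that $\lambda^{\vee}$ and $\lambda$ have the same $t$-core and the same $t$-quotient, so by Lemma \ref{2.1} $\lambda = \lambda^{\vee}$.

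The main obstacle --- really the only delicate point --- is pinning down the normalization so that the index-reversal comes out to be $i \mapsto t-1-i$ rather than $i \mapsto t-i$ or some shift. This depends on the chosen conventions for the beta-set (whether one uses $\{0,1,\dots\}$-based first-column hook lengths, whether the abacus is filled from $0$, and in which order the runners $\lambda_0,\dots,\lambda_{t-1}$ are indexed), and on choosing $|X|$ to be a multiple of $t$ (so that complementation preserves the cardinality condition and does not rotate the runners). I would therefore be careful to state the convention explicitly, matching the one used in \cite{HN}, and note that with that convention the arithmetic $N-1-(tq+i) \equiv t-1-i \pmod t$ for $t \mid N$ gives precisely the indexing $\lambda^{\vee}_i = \lambda_{t-1-i}$ in the statement. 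The self-conjugacy of ${\mathcal Cor}_t(\lambda)$ then requires the small additional observation that a $t$-core is determined by its abacus runner-bead counts and that conjugation of a $t$-core is exactly the reversal of those counts, which is immediate from the same complementation computation applied to the core (whose quotient is empty). Everything else is routine beta-set bookkeeping, and since the lemma is quoted from \cite{HN} I would keep the write-up brief, citing that reference for the conventions.
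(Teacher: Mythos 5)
The paper does not actually prove this lemma: it is quoted from \cite{HN} and used as a black box, so there is no in-paper argument to compare yours against. Judged on its own, your beta-set/abacus argument is the standard proof of this fact and is essentially correct: conjugation acts on a suitably normalized beta-set by complement-and-reflect, which reverses the runner index $i \mapsto t-1-i$ and acts as conjugation within each runner, giving ${\mathcal Cor}_t(\lambda^{\vee})={\mathcal Cor}_t(\lambda)^{\vee}$ and ${\mathcal Quo}_t(\lambda^{\vee})_i=(\lambda_{t-1-i})^{\vee}$; both directions of the equivalence then follow from the unique determination of $\lambda$ by its core and quotient (Lemma \ref{2.1}).

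Two small points to tighten. First, the complementation formula as you literally wrote it, $X \mapsto \{0,\dots,N-1\}\setminus\{N-1-x : x\in X\}$, only makes sense if $X\subseteq\{0,\dots,N-1\}$, which fails for a beta-set of size $N$ unless $\lambda=\emptyset$; you should complement inside $\{0,\dots,M-1\}$ with $M\geq \lambda_1+N$ and $t\mid M$ (and note the resulting beta-set of $\lambda^{\vee}$ has size $M-N$, again a multiple of $t$, so the quotient is read off consistently). You flag exactly this normalization issue, so this is presentational rather than a gap. Second, your step (a) is muddled where you say the reversal being an internal symmetry is ``part of what we must check'': nothing extra needs checking there. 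The bead-count reversal shows ${\mathcal Cor}_t(\lambda^{\vee})={\mathcal Cor}_t(\lambda)^{\vee}$ outright (a $t$-core is determined by its runner bead counts, and the conjugate of a $t$-core is a $t$-core), and then $\lambda=\lambda^{\vee}$ is equivalent, via Lemma \ref{2.1}, to equality of cores and quotients, which is precisely the stated condition. With those repairs the write-up is complete.
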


\begin{lemma}\label{nec} A partition $\lambda$ is an self-conjugate $(s,t)$-core partition if and only if ${\mathcal Cor}_g(\lambda)$ is self-conjugate and each part of ${\mathcal Quo}_g(\lambda)=(\lambda_0,\cdots,\lambda_{g-1})$ is an $(s',t')$-core such that $\lambda_i=\lambda^{\vee}_{g-i-1}$.
\end{lemma}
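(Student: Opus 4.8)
\emph{Proof proposal.} The plan is to obtain Lemma \ref{nec} by intersecting two characterizations that are already available to us: Theorem \ref{stcore}, which describes $(s,t)$-cores through their $g$-quotient, and Lemma \ref{nec0} applied with $t$ replaced by $g$, which describes self-conjugate partitions through their $g$-core and $g$-quotient. Since a partition is a self-conjugate $(s,t)$-core precisely when it is simultaneously an $(s,t)$-core and self-conjugate, it will suffice to conjoin these two conditions and read off the result.

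First I would note that, by Theorem \ref{stcore}, $\lambda$ is an $(s,t)$-core if and only if every entry $\lambda_i$ of ${\mathcal Quo}_g(\lambda)=(\lambda_0,\ldots,\lambda_{g-1})$ is an $(s',t')$-core. Next, by Lemma \ref{nec0} (with $g$ in place of $t$), $\lambda$ is self-conjugate if and only if ${\mathcal Cor}_g(\lambda)$ is self-conjugate and $\lambda_i^{\vee}=\lambda_{g-i-1}$ for all $0\le i\le g-1$. Taking the conjunction of these two equivalences yields exactly the statement of Lemma \ref{nec}, so the proof amounts to citing the two results in sequence.

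For coherence I would also remark (though it is not logically required) that the two conditions are mutually compatible: conjugating a Young diagram leaves the multiset of hook lengths unchanged, so the conjugate of an $(s',t')$-core is again an $(s',t')$-core; hence imposing $\lambda_i=\lambda_{g-i-1}^{\vee}$ is consistent with asking each $\lambda_j$ to be an $(s',t')$-core. This also means the constraints only need to be checked for the ``independent'' half of the indices, with the remaining ones determined by conjugation.

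The main obstacle here is purely one of bookkeeping rather than mathematics: one must make sure that the indexing convention of Lemma \ref{nec0}, with indices running $0,\ldots,g-1$ and the involution $i\mapsto g-i-1$, is set up with respect to the same ordering of ${\mathcal Quo}_g(\lambda)$ that underlies the hook-length/quotient correspondence (Proposition \ref{hookbij}) used in the proof of Theorem \ref{stcore}. Once the two orderings are aligned, there is essentially nothing further to prove.
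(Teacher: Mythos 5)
Your proposal is correct and matches the paper's proof exactly: the paper likewise derives Lemma \ref{nec} by simply combining Theorem \ref{stcore} with Lemma \ref{nec0}. Your additional remarks on compatibility and indexing are sensible but not needed.
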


\begin{proof} This follows from Theorem \ref{stcore} and Lemma \ref{nec0}.
\end{proof}
Let $\psi^*_{s,t}(n)$ be the number of self-conjugate $(s,t)$-core partitions of $n$.
\begin{proof}[{\bf{Proof of Theorem \ref{genfunselfstcore}}}]

If $\la$ is any self-conjugate $(s,t)$-core of $n$ of $g$-weight $v$, then ${\mathcal Cor}_g(\lambda)$ can be any self-conjugate $g$-core of $n-vg$.\\

\nin
If $g$ is even, then ${\mathcal Quo}_g(\lambda)$ has the form $(\la_0, \ldots , \la_{\frac{g}{2}-1}, \la^{\vee}_{\frac{g}{2}-1}, \ldots , \la^{\vee}_0)$, and $(\la_0, \ldots , \la_{\frac{g}{2}-1})$ can be any $\frac{g}{2}$-tuple of $(s',t')$-cores whose sizes add up to $w$ such that $v=2w$. The number of such $\la$'s is thus
\begin{equation}
\psi^*_{s,t}(n)=\dis \sum^{\frac{n}{2g}}_{w\geq 0} Q^{(s',t')}_{ \frac{g}{2}}(w) f^*_g(n-2wg)
\end{equation}
in this case. [Note: Here $\frac{n}{g}\geq2w_1+w_2$.]\\ 

\nin
If, on the other hand, $g$ is odd, then ${\mathcal Quo}_g(\lambda)=(\la_0, \ldots , \la_{\frac{g-3}{2}}, \la_{\frac{g-1}{2}}, \la^{\vee}_{\frac{g-3}{2}}, \ldots , \la^{\vee}_0)$, where $(\la_0, \ldots , \la_{\frac{g-3}{2}})$ can be any $\frac{g-1}{2}$-tuple of $(s',t')$-cores whose sizes add up to $w_1$ say, and $\la_{\frac{g-1}{2}}$ can be any self-conjugate $(s',t')$-core of $w_2$, with $v=2w_1+w_2$. The number of such $\la$ (over all such $g$-weights $\nu$) is thus
\begin{equation}
\psi^*_{s,t}(n)= \dis \sum^{\frac{n}{g}}_{2w_1+w_2\geq 0} Q^{(s',t')}_{\frac{g-1}{2}}(w_1) \psi^*_{s',t'}(w_2) f^*_g(n-(2w_1+w_2)g)
\end{equation}
in this case.\\

\nin
These yield the desired generating functions.

\end{proof}

\subsection{$(\bar{s},\bar{t})$-cores with $g>1$}

In this section, we state and prove new bar-analogues of Theorem \ref{stcore} and Corollary \ref{genfunstcore}.

\medskip

We start with some background on bar partitions. For more details, we refer the reader to \cite[Section 4]{O3}.  Recall a $\bar{t}$-core, or $\bar{t}$-core partition, is any bar partition whose multiset of bar lengths avoids $t$, or contains no bars of length $t$. More generally, $\cor_{\bar{t}}(\la)$ the $\bar{t}$-core of $\la$, is obtained by removing successive bars of length $t$ from $\la$ until no bars of length $t$ remain. If there are $w$ such bars of length $t$, then the $\bar{t}$-quotient ${\mathcal Quo}_{\bar{t}}(\la)$ is an $\frac{t+1}{2}$-tuple $(\la_0, \, \la_1, \, \ldots , \, \la_{\frac{t-1}{2}})$ of partitions whose sizes add up to $w$, where $\la_0$ is itself a bar-partition. The integer $w$ is then called the {\emph{$\bar{t}$-weight}} of $\la$, and ${\mathcal Cor}_{\bar{t}}(\la)$ is a bar-partition of $n-wt$. The $\bar{t}$-quotient ${\mathcal Quo}_{\bar{t}}(\la)$ is said to have $t$-weight $w$.

\begin{lemma} \cite[Proposition (4.2)]{O3}
If $\lambda$ is a bar-partition, and if $t$ is any odd positive integer, then $\lambda$ is completely and uniquely determined by ${\mathcal Cor}_{\bar{t}}(\lambda)$ and ${\mathcal Quo}_{\bar{t}}(\la)$.
\end{lemma}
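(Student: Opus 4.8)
The plan is to reconstruct $\la$ from the pair $({\mathcal Cor}_{\bar t}(\la),{\mathcal Quo}_{\bar t}(\la))$ by running the $\bar t$-bar-removal process in reverse on an abacus model, in direct analogy with the ordinary case (Lemma \ref{2.1}) but using the $\bar t$-bar-abacus of Morris--Yaseen and Olsson in place of the ordinary $t$-abacus (see \cite[Section 4]{O3}). Write $t=2k+1$. Encode the bar partition $\la=(\la_1>\cdots>\la_m>0)$ by placing beads at the positions $\la_1,\ldots,\la_m$ on a $t$-runner abacus, where runner $0$ plays a distinguished role reflecting the shift-symmetry: a bead on runner $0$ and a pair of beads on ``mirror'' positions (those summing to a multiple of $t$) interact through the bar structure of the shift-symmetric diagram $SS(\la)$.

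First I would record the effect on this abacus of removing a single bar of length $t$. This is an elementary local move of one of three kinds: on a runner $i$ with $1\le i\le k$, slide a bead up one position into an empty slot; on the mirror runner $t-i$, the corresponding move; or, on runner $0$, either slide a bead up one position past an empty slot, or delete the two beads occupying the two lowest positions of runner $0$. The point to verify here is that this list is exhaustive and that each move decreases $|\la|$ by exactly $t$; this is where one uses the explicit description of bar lengths via $SS(\la)$.

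Next I would argue that the removal process is confluent: independently of the order in which the $t$-bars are removed, the terminal configuration is the same, and it represents ${\mathcal Cor}_{\bar t}(\la)$. Simultaneously, the displacement data --- how far each bead on runners $1,\ldots,k$ has travelled up from its terminal position, together with the analogous data on runner $0$ and the number of pair-deletions performed there --- assemble into the $(k+1)$-tuple ${\mathcal Quo}_{\bar t}(\la)=(\la_0,\la_1,\ldots,\la_k)$, with $\la_0$ the bar partition coming from runner $0$ and $\la_1,\ldots,\la_k$ the ordinary partitions coming from the conjugate pairs $(i,t-i)$. Confluence and the identification of the terminal configuration with the $\bar t$-core is the standard diamond-lemma argument, and is essentially the bar-analogue of the corresponding fact for the $t$-abacus.

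Uniqueness and determination then follow immediately: from ${\mathcal Cor}_{\bar t}(\la)$ one recovers the terminal abacus configuration, from ${\mathcal Quo}_{\bar t}(\la)$ one recovers the displacement of every bead, and pushing each bead back down by the prescribed amount (re-inserting the pair-deleted beads on runner $0$) reconstitutes the bead set of $\la$, hence $\la$; since every step is forced, the reconstruction is unique. The main obstacle is the bookkeeping on runner $0$: one must check that the bar-removal move there is well-defined and invertible, that it produces a bar partition $\la_0$ rather than an ordinary one, and that its presence does not disturb the independence of the other runners --- this is the single place where the $\bar t$-theory genuinely differs from the ordinary $t$-core/$t$-quotient theory underlying Lemma \ref{2.1}.
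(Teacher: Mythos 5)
The paper offers no proof of this lemma --- it is quoted verbatim from Olsson \cite[Proposition (4.2)]{O3} --- so there is no internal argument to compare against; your bar-abacus strategy is the standard route and is essentially how the cited source proceeds. However, as written your list of local moves is wrong in a way that breaks precisely the step you flag as the one needing verification. A bar of length $t$ in $\la$ is of one of two types: (i) a part $a\geq t$ with $a-t\notin\la$ (replace $a$ by $a-t$, deleting the part if $a=t$), which is a slide-up on whichever runner $a$ lies on; or (ii) a pair of parts $a,b$ with $a+b=t$, which are beads at the lowest occupied positions of the \emph{conjugate} runners $i$ and $t-i$ for some $1\le i\le k$. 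You have placed the pair-deletion on runner $0$, where it cannot occur: two parts divisible by $t$ sum to at least $3t$, so runner $0$ carries no pair-bar of length $t$ at all, and conversely your list omits the genuine type (ii) move on the runner pairs $(i,t-i)$. With your list, the assertion that ``the list is exhaustive and each move decreases $|\la|$ by exactly $t$'' is false, so the induction it is meant to support does not start.

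This error propagates into the reconstruction step. Because beads are created and destroyed in pairs across the conjugate runners $i$ and $t-i$, the quotient component $\la_i$ ($1\le i\le k$) cannot be read off as per-bead displacements on each of the two runners separately (the number of beads is not conserved); one must encode the two runners \emph{jointly} as a single beta-set, so that a slide-up on runner $i$, a slide-up on runner $t-i$, and a pair-deletion at positions $i$ and $t-i$ all become removal of a $1$-hook from one ordinary partition $\la_i$. Runner $0$ is in fact the easy runner: its beads at positions $a_1t>a_2t>\cdots$ give the bar partition $\la_0=(a_1,a_2,\ldots)$ directly, and the only length-$t$ moves there are slide-ups (including deletion of a bead at position $t$). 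Once the moves are corrected and the joint encoding is used, your confluence-plus-reconstruction argument does go through and recovers Olsson's proof; but in its present form the proposal would not survive the verification it prescribes for itself.
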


In analogy with Proposition \ref{hookbij}, we have the following:

\begin{proposition} \label{barbij} \cite[Theorem (4.3)]{O3}
If $t$ is an odd positive integer and $\la$ is a bar-partition, then, for any integer $k \geq 1$, there is a canonical bijection between the set of bars of length $kt$ in $\la$ and the set of bars of length $k$ in ${\mathcal Quo}_{\bar{t}}(\la)$, where a bar in ${\mathcal Quo}_{\bar{t}}(\la)$ is defined to be a bar in $\la_0$, or a hook in any of $\la_1, \, \ldots , \, \la_{\frac{t-1}{2}}$.
\end{proposition}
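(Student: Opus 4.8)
The plan is to argue on the $\bar t$-abacus, the runner display of the set of parts of a bar-partition, which is the setting in which both the $\bar t$-quotient and bar-removal are naturally described. First I would fix, for $\la$ with set of parts $X=\{\la_1>\cdots>\la_\ell>0\}$, the abacus with runners $0,1,\ldots,t-1$, runner $r$ holding the positions congruent to $r$ modulo $t$ and carrying a bead exactly at the positions lying in $X$. I would then recall how ${\mathcal Quo}_{\bar t}(\la)$ is read off this display: the beads on runner $0$, divided by $t$, are the parts of the bar-partition $\la_0$; and for $1\le r\le \tfrac{t-1}{2}$ the runners $r$ and $t-r$ are folded together (reflecting runner $t-r$) to produce a beta-set for the ordinary partition $\la_r$. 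Finally I would record the two elementary moves implementing bar-removal: a \emph{first-kind} $kt$-bar is a single bead sliding $k$ steps toward position $0$ along its own runner (the target position being vacant), a \emph{second-kind} $kt$-bar is the simultaneous deletion of a bead at position $a$ on runner $r$ and a bead at position $kt-a$ on runner $t-r$, and the degenerate case of a removable part equal to $kt$ is the first-kind move with target $0$.

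The bijection is then assembled runner by runner. On runner $0$: a first-kind $kt$-bar whose bead sits on runner $0$ is exactly a $k$-step slide on a single runner, i.e.\ a first-kind $k$-bar of $\la_0$ (or, with target $0$, the removable $k$-bar of $\la_0$); a second-kind $kt$-bar with both beads on runner $0$ corresponds to parts $at$ and $bt$ of $\la$ with $a+b=k$, hence to a second-kind $k$-bar of $\la_0$. For $1\le r\le\tfrac{t-1}{2}$: a first-kind $kt$-bar with bead on runner $r$, a first-kind $kt$-bar with bead on runner $t-r$, and a second-kind $kt$-bar pairing a bead on runner $r$ with one on runner $t-r$, all become, under the folding that defines $\la_r$, a length-$k$ bead-slide on the beta-set of $\la_r$, i.e.\ a $k$-hook of $\la_r$; conversely every $k$-hook of $\la_r$ is such a slide and unfolds to exactly one of these three configurations, and the assignment is visibly reversible. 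Since $t$ is odd, no runner is its own partner, so runner $0$ together with the $\tfrac{t-1}{2}$ pairs $(r,t-r)$ exhausts all $kt$-bars of $\la$; collecting the pieces yields a bijection onto the bars of length $k$ in $\la_0$ together with the hooks of length $k$ in $\la_1,\ldots,\la_{\frac{t-1}{2}}$, and it is canonical because each underlying abacus move is.

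The step I expect to be the main obstacle is precisely the bookkeeping for the paired runners $(r,t-r)$: one must check that the folding/reflection used to \emph{define} $\la_r$ is compatible with bar-removal, so that the three configurations above really do land on genuine $k$-hooks of the single ordinary partition $\la_r$ and account for all of them, being careful about which positions are occupied versus vacant after reflection and about the corner-row conventions that distinguish first- from second-kind bars (here one uses that conjugation preserves hook lengths, so the runner-$r$ and runner-$(t-r)$ readings agree). An alternative route that bypasses some of this is to pass to the ``double'' $\widetilde{\la}$, a self-conjugate partition (using $t$ odd), note that $kt$-bars of $\la$ correspond to the symmetric $kt$-hook configurations of $\widetilde{\la}$, identify ${\mathcal Quo}_{\bar t}(\la)$ inside the symmetric quotient ${\mathcal Quo}_t(\widetilde{\la})$ via Lemma \ref{nec0}, and then invoke Proposition \ref{hookbij}; I would take the abacus argument as primary and use this doubling only as a consistency check.
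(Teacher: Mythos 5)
The paper does not actually prove this proposition: it is quoted verbatim from Olsson \cite[Theorem (4.3)]{O3} and used as a black box, playing the role for bars that Proposition \ref{hookbij} plays for hooks. Your abacus argument is, in substance, the standard proof of the cited theorem, and the outline is correct: first‑kind $kt$-bars are $k$-step slides along a single runner (with target $0$ allowed, since $0$ is never occupied), second‑kind $kt$-bars are deletions of a bead pair lying on runners $r$ and $t-r$ (or both on runner $0$), and oddness of $t$ splits the runners into runner $0$ plus $\tfrac{t-1}{2}$ unordered pairs with no self-paired runner. The step you flag as the obstacle is indeed where all the content lies, so it is worth recording how it resolves: in the folded display defining $\lambda_r$ the runner $t-r$ is both reflected \emph{and complemented} (its beads become gaps), and the finiteness of the set of parts guarantees the resulting doubly infinite sequence is a legitimate beta-set. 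With that convention, deleting the pair $\bigl(jt+r,\ (k-j-1)t+(t-r)\bigr)$ vacates position $j$ and creates a bead at position $j-k$, a first‑kind move on runner $t-r$ is an upward gap-move, i.e.\ again a downward bead-move by $k$, and a first‑kind move on runner $r$ is literally a bead-move by $k$; conversely each $k$-hook of $\lambda_r$ unfolds to exactly one of these three configurations according to the signs of the two positions involved. One caution: your proposed consistency check via a self‑conjugate ``double'' is more delicate than you suggest --- the shift-symmetric diagram in the paper's normalization is not self-conjugate, and the bar/hook dictionary for the doubled partition has exceptional diagonal cases --- so the abacus argument should indeed be the primary (and is the standard) route.
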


Now consider odd integers $s=s'g$ and $t=t'g$, where $g>1$. 
We denote by $\Psi_{\s,\bar{t}}$ the generating function for the number of $(\s,\bar{t})$-core partitions. We now have the following bar-analogues of Theorem \ref{stcore} and Corollary \ref{genfunstcore}. 

\begin{theorem} \label{stbarcore}
Let $\la$ be any bar-partition, and let ${\mathcal Quo}_{\bar{g}}(\la)=(\la_0, \, \la_1, \, \ldots , \, \la_{\frac{g-1}{2}})$ be the $\g$-quotient of $\la$. Then $\la$ is an $(\s,\bar{t})$-core if and only if 
\begin{enumerate}
\item $\la_0$ is an $(\bar{s}', \bar{t}')$-core and\\
\item $\la_1, \, \ldots , \, \la_{\frac{g-1}{2}}$ are $(s',t')$-cores.
\end{enumerate}
\end{theorem}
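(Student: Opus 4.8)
The plan is to mimic exactly the proof of Theorem \ref{stcore} given above, replacing Proposition \ref{hookbij} with its bar-analogue, Proposition \ref{barbij}. Recall that a bar-partition $\la$ is an $\s$-core precisely when its multiset of bar lengths contains no bar whose length is divisible by $s = s'g$ (since removing a bar of length $s$ is what an $\s$-hook removal does, and more generally the presence of a bar of length $ks$ for some $k\ge 1$ is equivalent to the failure of being an $\s$-core). So the first step is to record this reformulation: $\la$ is an $\s$-core iff $\la$ has no bar of length $ks'g$ for any $k \ge 1$, and likewise $\la$ is a $\bar{t}$-core iff $\la$ has no bar of length $kt'g$ for any $k \ge 1$.

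Next I would apply Proposition \ref{barbij} with the odd integer $g$ in place of $t$: for each $k \ge 1$ there is a bijection between bars of length $kg$ in $\la$ and bars of length $k$ in ${\mathcal Quo}_{\bar g}(\la) = (\la_0, \la_1, \ldots, \la_{\frac{g-1}{2}})$, where a bar in the $\bar g$-quotient means a bar in $\la_0$ or a hook in one of $\la_1, \ldots, \la_{\frac{g-1}{2}}$. Specializing $k = k's'$, bars of length $k's'g = k's$ in $\la$ correspond to bars of length $k's'$ in ${\mathcal Quo}_{\bar g}(\la)$. Hence $\la$ has no bar of length divisible by $s$ if and only if $\la_0$ has no bar of length divisible by $s'$ and each of $\la_1, \ldots, \la_{\frac{g-1}{2}}$ has no hook of length divisible by $s'$ — that is, iff $\la_0$ is an $\bar{s}'$-core and $\la_1, \ldots, \la_{\frac{g-1}{2}}$ are $s'$-cores. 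Running the identical argument with $t$ in place of $s$ shows $\la$ is a $\bar t$-core iff $\la_0$ is a $\bar{t}'$-core and $\la_1, \ldots, \la_{\frac{g-1}{2}}$ are $t'$-cores. Combining the two (and noting $g$ odd forces $s', t'$ odd so that $\bar{s}'$- and $\bar{t}'$-cores are defined), $\la$ is an $(\s,\bar t)$-core iff $\la_0$ is both an $\bar{s}'$-core and a $\bar{t}'$-core — i.e. an $(\bar{s}', \bar{t}')$-core — and each $\la_i$ ($1 \le i \le \frac{g-1}{2}$) is both an $s'$-core and a $t'$-core, i.e. an $(s',t')$-core. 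That is the claimed statement.

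The one genuine subtlety — the step I would be most careful about — is making sure the reduction "no bar of length $s$ $\Leftrightarrow$ no bar of length $ks$ for all $k$" is legitimate, since $s$ itself may not equal $kg$ on the nose unless one first factors out $g$; here it works precisely because $g \mid s$ and $g \mid t$, so every multiple of $s$ (resp. $t$) is a multiple of $g$, and Proposition \ref{barbij} is stated for all $k \ge 1$. A secondary point worth stating explicitly is that the component $\la_0$ of the $\bar g$-quotient is itself a bar-partition (as recorded in the background above), so that the notion of $\la_0$ being an $(\bar{s}',\bar{t}')$-core makes sense, whereas $\la_1, \ldots, \la_{\frac{g-1}{2}}$ are ordinary partitions for which the ordinary $(s',t')$-core condition applies. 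Beyond these remarks the proof is a direct transcription of the proof of Theorem \ref{stcore}, so I would keep it to a few lines.
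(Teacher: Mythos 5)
Your proposal is correct and follows essentially the same route as the paper's own proof: both reduce being an $\s$-core (resp.\ $\bar{t}$-core) to having no bar of length divisible by $s=s'g$ (resp.\ $t=t'g$), then apply Proposition \ref{barbij} with the bar-quotient in place of the ordinary quotient, exactly mirroring the proof of Theorem \ref{stcore}. Your extra remarks on why the ``divisible by $s$'' reformulation is legitimate and on $\la_0$ being the unique bar-partition component are sensible care that the paper leaves implicit, but they do not change the argument.
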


\begin{proof}
Note that $\la$ is an $\s$-core if and only if $\la$ has no bar of length divisible by $s=s'g$. Now, by Proposition \ref{barbij}, for any $k \geq 1$, there is a bijection between the set of bars of length $ks=kgs'$ in $\la$ and the set of bars of length $ks'$ in $\quo_{\bar{g}}(\la)$. Hence $\la$ is an $\s$-core if and only if $\la_0$ is an $\bar{s}'$-core and $\la_1, \, \ldots , \, \la_{\frac{g-1}{2}}$ are $s'$-cores. Similarly, $\la$ is a $\bar{t}$-core if and only if $\la_0$ is a $\bar{t}'$-core and $\la_1, \, \ldots , \, \la_{\frac{g-1}{2}}$ are $t'$-cores. The result follows.
\end{proof}

\begin{corollary}
\label{genfunstbarcore} 
With the above notation, we have
$$\Psi_{\s,\bar{t}}(x)= \Psi_{\bar{s}',\bar{t}'}(x^g)  \Psi_{s',t'}(x^g)^{\frac{g-1}{2}}  F_{\bar{g}}(x),$$
where $F_{\g}(x)$
is the generating function for the number of $\g$-core partitions.
\end{corollary}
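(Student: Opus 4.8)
The plan is to mirror exactly the proof of Corollary \ref{genfunstcore}, replacing the $g$-core/$g$-quotient machinery with its bar-analogue. First I would fix a bar-partition $\la$ of $n$ with $\bar{g}$-weight $w$. By the lemma of Olsson quoted above, $\la$ is determined uniquely by the pair $(\cor_{\bar{g}}(\la), \quo_{\bar{g}}(\la))$, and $\cor_{\bar{g}}(\la)$ is a $\bar{g}$-core of $n - wg$. By Theorem \ref{stbarcore}, $\la$ is an $(\s,\bar{t})$-core precisely when the component $\la_0$ of $\quo_{\bar{g}}(\la)$ is an $(\bar{s}',\bar{t}')$-core and the components $\la_1,\ldots,\la_{(g-1)/2}$ are $(s',t')$-cores; moreover, these components can be chosen freely among such cores, subject only to the constraint that their sizes sum to $w$. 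So if I write $\bar{f}_{\bar{g}}(m)$ for the number of $\bar{g}$-cores of $m$ and split the weight $w = w_0 + w_1$ with $w_0$ the size of the bar-component and $w_1$ the total size of the $(g-1)/2$ ordinary components, I get the convolution identity
\begin{equation}\label{jadebar}
\psi_{\s,\bar{t}}(n) = \sum_{w \geq 0}\ \Biggl(\ \sum_{w_0 + w_1 = w}\ f_{\bar{s}',\bar{t}'}(w_0)\, Q^{(s',t')}_{(g-1)/2}(w_1)\Biggr)\, f_{\bar{g}}(n - gw),
\end{equation}
where $f_{\bar{s}',\bar{t}'}(w_0)$ is the number of $(\bar{s}',\bar{t}')$-cores of $w_0$ and $Q^{(s',t')}_{(g-1)/2}(w_1)$ counts $(g-1)/2$-tuples of $(s',t')$-cores of total weight $w_1$.

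Next I would pass to generating functions. The inner double sum in \eqref{jadebar} is a Cauchy product, so its generating function in a variable $y$ is $\Psi_{\bar{s}',\bar{t}'}(y)\cdot\Psi_{s',t'}(y)^{(g-1)/2}$, since $\sum_w Q^{(s',t')}_{(g-1)/2}(w) y^w = \Psi_{s',t'}(y)^{(g-1)/2}$ (independence of the $(g-1)/2$ ordinary components). Each unit of $\bar{g}$-weight contributes $g$ to $|\la|$, so I substitute $y = x^g$; the $\bar{g}$-core contributes the factor $F_{\bar{g}}(x)$. Assembling these, the outer sum over $w$ becomes the product
$$\Psi_{\s,\bar{t}}(x) = \Psi_{\bar{s}',\bar{t}'}(x^g)\,\Psi_{s',t'}(x^g)^{\frac{g-1}{2}}\,F_{\bar{g}}(x),$$
which is the claimed formula. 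This "substitute $x \mapsto x^g$ and multiply by the core generating function" step is exactly the transformation used (citing \cite{O3}) in the proof of Corollary \ref{genfunstcore}, so I would simply refer to it rather than redo the bookkeeping.

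The only genuine subtlety — and the step I would be most careful about — is the asymmetry in the $\bar{g}$-quotient: one component ($\la_0$) is a bar-partition and the other $(g-1)/2$ are ordinary partitions, and the "bar vs. hook" dichotomy in Proposition \ref{barbij} must be tracked correctly so that $\la_0$ gets enumerated by $\Psi_{\bar{s}',\bar{t}'}$ (bar-cores of $(\bar{s}',\bar{t}')$) while the rest get enumerated by $\Psi_{s',t'}$ (ordinary $(s',t')$-cores). This is already handled by the statement of Theorem \ref{stbarcore}, so once that is invoked the generating-function manipulation is routine. I should also note the standing hypothesis that $s, t$ (hence $g$, $s'$, $t'$) are odd, which is needed for the $\bar{g}$-quotient to be defined and to have the stated shape; this is in force from the paragraph preceding Theorem \ref{stbarcore}.
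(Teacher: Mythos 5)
Your argument is correct and follows essentially the same route as the paper: decompose each $(\bar{s},\bar{t})$-core via its $\bar{g}$-core and $\bar{g}$-quotient, invoke Theorem \ref{stbarcore} to characterize the admissible quotients, and convert the resulting convolution identity into the product of generating functions. The only cosmetic difference is that you unpack the count $Q^{(\bar{s}',\bar{t}')}_{\bar{g}}(w)$ explicitly as a Cauchy product of the bar-component and ordinary-component counts, whereas the paper keeps it as a single symbol; the substance is identical.
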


\begin{proof}
Each $(\bar{s},\bar{t})$-core partition $\la$ of $n$ is completely and uniquely determined by its $\g$-core ${\mathcal Cor}_{\g}(\la)$ and $\g$-quotient ${\mathcal Quo}_{\bar{g}}(\la)$. And, by Theorem \ref{stbarcore}, if $\la$ has $\g$-weight $w$, then ${\mathcal Quo}_{\g}(\la)$ can be any $\g$-quotient $(\la_0, \, \la_1, \, \ldots , \, \la_{\frac{g-1}{2}})$ of weight $w$ such that $\la_0$ is an $(\bar{s}', \bar{t}')$-core and $\la_1, \, \ldots , \, \la_{\frac{g-1}{2}}$ are $(s',t')$-cores. Writing $Q^{(\bar{s}',\bar{t}')}_{\bar{g}}(w)$ for the number of such $\g$-quotients, this shows that the enumerating function $\psi_{\s,\bar{t}}(n)$ of $(\bar{s},\bar{t})$-cores of $n$ is
\begin{equation}\label{ok2}
\psi_{\s,\bar{t}}(n)=\dis \sum_{w\geq 0} Q^{(\bar{s}',\bar{t}')}_{\bar{g}}(w)f_{\g}(n-gw).
\end{equation}
Viewed as a generating function,this transforms into
$$\Psi_{\s,\bar{t}}(x)= \Psi_{\bar{s'},\bar{t}'}(x^g)  \Psi_{s',t'}(x^g)^{\frac{g-1}{2}}  F_{\bar{g}}(x),$$
as claimed. 
\end{proof}

We will return to these functions in Section 3 and Section 5.

\section{$t$-cores that are not $g$-cores.}

In this section we let $g>1$, and consider first $t$-cores and self-conjugate $t$-cores that are {\it not} $g$-cores, and then $\bar{t}$-cores that are {\it not} $\bar{g}$-cores. We call these $t \backslash g${\it-cores}, self-conjugate $t \backslash g${\it-cores} and $\bar{t} \backslash \bar{g}${\it-cores} respectively. We first study $(s,t)$-core partitions that are {\it not} $g$-cores and $(\bar{s}, \bar{t})$-cores that are {\it not} $\bar{g}$-cores. Note we fix positive integers $s$ and $t$ where $s=s'g$ and $t=t'g$, and $g=$gcd$(s,t)$, where $s'>1$ and $t'>1$.

\subsection{$t\backslash g$-cores}
We begin with a succinct proof of Theorem \ref{infinitelymanystcores}. 

\begin{proof}[{\bf{Proof of Theorem \ref{infinitelymanystcores}}}]
Recall that there are infinitely many $g$-cores if $g>1$. For any $g$-core $\ga$, consider the (completely and uniquely defined) partition $\la$ with ${\mathcal Cor}_g(\la)=\gamma$ and ${\mathcal Quo}_g(\la)=(\la_1, \, \ldots , \, \la_g)=( (1), \, \emptyset, \, \ldots , \, \emptyset)$. Then $\la_1, \, \ldots , \, \la_g$ are certainly $(s',t')$-cores (since neither $s'$ nor $t'$ is 1), so that, by Theorem \ref{stcore}, $\la$ is an $(s'g,t'g)$-core. [Note that, in order to apply Theorem \ref{stcore}, we do not actually need the extra hypothesis that $g=\text{gcd}(s'g,t'g)$.] Also, since ${\mathcal Quo}_g(\la) \neq (  \emptyset, \, \ldots , \, \emptyset)$, the partition $\la$ is certainly {\it not} a $g$-core. Allowing $\ga$ to vary produces an infinite number of (distinct) $(s'g,t'g)$-cores which are not $g$-cores.
\end{proof}

Theorem \ref{yeahtnog} shows that, if $g \geq 4$ and $t'>1$, then, for any integer $n \geq g$, there exists a $t'g$-core of $n$ that is not a $g$-core. Our next theorem improves this result, particularly if $t' \geq 4$.

\begin{theorem} \label{f} Fix integers $g\geq 4$ and $t'>1$, and let $t=t'g$. Then, for every $n\geq g$, the number $\psi_{t \backslash g}(n)$ of $t \backslash g$-cores of $n$ is bounded below by $\dis \sum_{w= 1}^{\left\lfloor \frac{n}{g} \right\rfloor} Q^{t'}_g(w)$, where $Q^{t'}_g(w)$ is the number of $g$-quotients of weight $w$ all of whose components are $t'$-cores.

In particular, $\psi_{t \backslash g}(n) \geq g$.
If, furthermore, $t' \geq 4$, then $\psi_{t \backslash g}(n) \geq g \left\lfloor \frac{n}{g} \right\rfloor$.
\end{theorem}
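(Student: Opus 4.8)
The plan is to record a $t\backslash g$-core of $n$ by its $g$-core and $g$-quotient, exactly as in the proof of Corollary~\ref{genfunstcore}, write down the resulting exact count, and then extract the stated lower bounds by discarding all the (positive) $g$-core contributions and keeping only the quotients.

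\textbf{Step 1: an exact formula.} By Lemma~\ref{2.1}, a partition $\la$ of $n$ is uniquely recorded by the pair $\big({\mathcal Cor}_g(\la),{\mathcal Quo}_g(\la)\big)$; if its $g$-weight is $w$, then ${\mathcal Cor}_g(\la)$ is a $g$-core of $n-gw$ and ${\mathcal Quo}_g(\la)=(\la_1,\dots,\la_g)$ is a $g$-tuple of partitions of total size $w$. Since the hooks of $\la$ of length divisible by $t=t'g$ correspond, under Proposition~\ref{hookbij}, to the hooks of ${\mathcal Quo}_g(\la)$ of length divisible by $t'$, the partition $\la$ is a $t$-core if and only if every $\la_i$ is a $t'$-core; and $\la$ fails to be a $g$-core precisely when $w\geq 1$. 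With $Q^{t'}_g(w)$ the number of $g$-tuples of $t'$-cores of total size $w$ and $f_g$ the $g$-core counting function, this gives
\[
\psi_{t\backslash g}(n)=\sum_{w\geq 1}Q^{t'}_g(w)\,f_g(n-gw)=\sum_{w=1}^{\lfloor n/g\rfloor}Q^{t'}_g(w)\,f_g(n-gw),
\]
the second equality because $f_g(n-gw)=0$ as soon as $gw>n$.

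\textbf{Step 2: the lower bounds.} Since $g\geq 4$, Theorem~\ref{Ono} gives $f_g(m)\geq 1$ for every $m\geq 0$, and $n-gw\geq 0$ throughout $1\leq w\leq\lfloor n/g\rfloor$, so dropping these factors yields $\psi_{t\backslash g}(n)\geq\sum_{w=1}^{\lfloor n/g\rfloor}Q^{t'}_g(w)$; the range of summation is nonempty exactly because $n\geq g$. To get $\psi_{t\backslash g}(n)\geq g$ it suffices that $Q^{t'}_g(1)=g$: a $g$-tuple of $t'$-cores of total size $1$ consists of a single $(1)$ (which is a $t'$-core since $t'>1$) placed in one of $g$ positions, the rest empty. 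For the refinement when $t'\geq 4$, I would apply Theorem~\ref{Ono} again at the level of the quotient: for each $w\geq 1$ there is a $t'$-core $\mu_w$ of size $w$, and placing $\mu_w$ in any one of the $g$ coordinates (others empty) produces $g$ distinct $g$-tuples of $t'$-cores of total size $w$, so $Q^{t'}_g(w)\geq g$; summing over $1\leq w\leq\lfloor n/g\rfloor$ gives $\psi_{t\backslash g}(n)\geq g\lfloor n/g\rfloor$.

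There is no genuine obstacle here: everything reduces to the quotient dictionary of Proposition~\ref{hookbij} together with the Positivity Theorem~\ref{Ono}. The only points requiring care are bookkeeping ones — verifying that the weight sum really terminates at $w=\lfloor n/g\rfloor$ so that no positive term is lost in passing from the identity to the inequality, and noting that the hypothesis $n\geq g$ is precisely what keeps the truncated sum (hence the bounds $g$ and $g\lfloor n/g\rfloor$) nonvacuous.
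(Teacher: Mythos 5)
Your proposal is correct and follows essentially the same route as the paper: the same exact expansion $\psi_{t\backslash g}(n)=\sum_{w\geq 1}Q^{t'}_g(w)f_g(n-gw)$ via the $g$-core/$g$-quotient decomposition, the same use of Theorem~\ref{Ono} to drop the $f_g$ factors, and the same counting of $g$-tuples obtained by placing a single $t'$-core in one of $g$ positions to get $Q^{t'}_g(1)=g$ and, when $t'\geq 4$, $Q^{t'}_g(w)\geq g$ for all $w\geq 1$. The only cosmetic difference is that the paper records the intermediate inequality $Q^{t'}_g(w)\geq g f_{t'}(w)$ explicitly before specializing.
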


\begin{proof}

First note that a partition $\la$ of $n$ is not a $g$-core if and only if it has $g$-weight $w \geq 1$. We therefore have (as in the proof of Theorem \ref{stcore})
$$
\psi_{t \backslash g}(n)  =  \dis \sum_{w \geq 1} Q^{t'}_g(w) f_g(n-wg)  =  \dis \sum_{w = 1}^{\left\lfloor \frac{n}{g} \right\rfloor} Q^{t'}_g(w) f_g(n-wg) \geq  \dis \sum_{w = 1}^{\left\lfloor \frac{n}{g} \right\rfloor} Q^{t'}_g(w), 
$$
the last inequality holding by Theorem \ref{Ono} since $g \geq 4$.

Note that, for any $w \geq 1$, we have $Q^{t'}_g(w) \geq g f_{t'}(w)$, as one can get $g$ different $g$-tuples of $t'$-cores of weight $w$ by placing a $t'$-core of $w$ (if any exists) in any of the $g$ positions available. We therefore get
$$\psi_{t \backslash g}(n)  \geq \dis \sum_{w = 1}^{\left\lfloor \frac{n}{g} \right\rfloor} g f_{t'}(w).$$
Now, since $t'>1$, the partition $(1)$ is the only $t'$-core of 1, whence $f_{t'}(1)=1$ (and, in fact, $Q^{t'}_g(1)=g$), so that $\psi_{t \backslash g}(n)  \geq g f_{t'}(1)=g$, as claimed.

If, furthermore, $t' \geq 4$, then, by Theorem \ref{Ono}, $f_{t'}(w) \geq 1$ for all $w$, and thus, in this case,
$$\psi_{t \backslash g}(n)  \geq \dis \sum_{w = 1}^{\left\lfloor \frac{n}{g} \right\rfloor} g = g \left\lfloor \frac{n}{g} \right\rfloor.$$

\end{proof}

\subsection{self-conjugate $t\backslash g$-cores.}

We have the following self-conjugate analogues to Theorem \ref{infinitelymanystcores} and Theorem \ref{f}.
\begin{theorem}
If $s,t,g>1$, then there are infinitely many self-conjugate $(s,t)$-core partitions that are not $g$-core partitions.
\end{theorem}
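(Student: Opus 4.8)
The plan is to mimic exactly the argument used for Theorem~\ref{infinitelymanystcores}, but now using Lemma~\ref{nec} (the self-conjugate version of the core/quotient correspondence) in place of Theorem~\ref{stcore}. First I would recall that, since $g>1$, there are infinitely many self-conjugate $g$-core partitions; this is needed so that the core component can be varied to produce infinitely many examples. Then, for each self-conjugate $g$-core $\gamma$, I would construct a partition $\la$ by prescribing ${\mathcal Cor}_g(\la)=\gamma$ together with a carefully chosen $g$-quotient that is nonempty (so that $\la$ is not a $g$-core) and that satisfies the symmetry condition $\la_i=\la^{\vee}_{g-i-1}$ forced by Lemma~\ref{nec}.

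The key step is choosing the right $g$-quotient. Unlike the non-self-conjugate case, we cannot simply put $(1)$ in the first slot and $\emptyset$ elsewhere, because that violates the palindromic condition $\la_i=\la^{\vee}_{g-i-1}$. Instead I would split into two cases according to the parity of $g$. If $g$ is even, take ${\mathcal Quo}_g(\la)=(\la_0,\ldots,\la_{g-1})$ with $\la_0=(1)$, $\la_{g-1}=(1)^{\vee}=(1)$, and all other components empty; since $(1)^{\vee}=(1)$ and $\emptyset^{\vee}=\emptyset$, the condition $\la_i=\la^{\vee}_{g-i-1}$ holds, the quotient is nonempty, and each component is an $(s',t')$-core because $s',t'>1$. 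If $g$ is odd, the middle slot $\la_{(g-1)/2}$ must itself be self-conjugate; here I would take $\la_{(g-1)/2}=(1)$ (which is self-conjugate) and all other components empty. In both cases Lemma~\ref{nec} guarantees that $\la$ is a self-conjugate $(s,t)$-core, and since the quotient is not the all-empty tuple, $\la$ is not a $g$-core. Letting $\gamma$ range over the infinitely many self-conjugate $g$-cores yields infinitely many distinct such $\la$.

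I expect the only real obstacle to be a minor bookkeeping point: one must make sure the constructed partitions are genuinely distinct as $\gamma$ varies, which follows immediately from Lemma~\ref{2.1} (the core and quotient determine $\la$ uniquely, and here the quotient is fixed while the core varies). A secondary subtlety is confirming that $(1)$ is indeed self-conjugate and an $(s',t')$-core — both are trivial since $|(1)|=1$ and $s',t'>1$. One could also remark, as the authors did after Theorem~\ref{infinitelymanystcores}, that the hypothesis $g=\gcd(s,t)$ is not actually used; only $s=s'g$, $t=t'g$ with $s',t'>1$ is needed. I would close by noting that this construction in fact produces self-conjugate $(s,t)\backslash g$-cores of arbitrarily large size and of every sufficiently large $g$-weight, foreshadowing the quantitative strengthening in the next theorem.

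\begin{proof}
Recall (from Theorem~\ref{fordsze}, or more elementarily from the existence of infinitely many self-conjugate $g$-cores when $g>1$) that there are infinitely many self-conjugate $g$-core partitions. Fix such a self-conjugate $g$-core $\gamma$, and define a partition $\la$ by requiring that ${\mathcal Cor}_g(\la)=\gamma$ and that ${\mathcal Quo}_g(\la)=(\la_0,\ldots,\la_{g-1})$ be given as follows. If $g$ is even, set $\la_0=\la_{g-1}=(1)$ and $\la_i=\emptyset$ for all other $i$. If $g$ is odd, set $\la_{(g-1)/2}=(1)$ and $\la_i=\emptyset$ for all other $i$. In either case $\la$ is completely and uniquely determined by Lemma~\ref{2.1}.

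We check that $\la$ is a self-conjugate $(s,t)$-core. Each component of ${\mathcal Quo}_g(\la)$ is either $\emptyset$ or $(1)$; since $s'>1$ and $t'>1$, the partition $(1)$ (and trivially $\emptyset$) is an $(s',t')$-core, so all components are $(s',t')$-cores. Moreover ${\mathcal Cor}_g(\la)=\gamma$ is self-conjugate, and the quotient satisfies $\la_i=\la^{\vee}_{g-i-1}$ for all $i$: when $g$ is even this uses $(1)^{\vee}=(1)$ and $\emptyset^{\vee}=\emptyset$ together with the symmetric placement of the two copies of $(1)$; when $g$ is odd it uses in addition that the middle component $(1)$ is self-conjugate. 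Hence, by Lemma~\ref{nec}, $\la$ is a self-conjugate $(s,t)$-core. Since ${\mathcal Quo}_g(\la)\neq(\emptyset,\ldots,\emptyset)$, the partition $\la$ has $g$-weight at least $1$ and is therefore not a $g$-core.

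Finally, as $\gamma$ ranges over the infinitely many self-conjugate $g$-cores, the resulting partitions $\la$ are pairwise distinct by Lemma~\ref{2.1}, since they share the same $g$-quotient but have distinct $g$-cores. This produces infinitely many self-conjugate $(s,t)$-core partitions that are not $g$-cores.
\end{proof}
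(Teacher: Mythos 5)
Your proof is correct and follows essentially the same strategy as the paper: fix a nonempty palindromic $g$-quotient of $(s',t')$-cores and let the self-conjugate $g$-core vary over the infinitely many possibilities. The only (immaterial) difference is that the paper uses the single quotient $((1),\emptyset,\ldots,\emptyset,(1))$ uniformly for both parities of $g$, whereas you split into cases and place $(1)$ in the middle slot when $g$ is odd.
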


\begin{proof} Recall that there are infinitely many self-conjugate $g$-cores for $g>1$. For any self-conjugate $g$-core $\ga^*$, consider the (completely and uniquely defined) partition $\la$ with ${\mathcal Cor}_g({\la})=\ga^*$ and ${\mathcal Quo}_g(\la)=( (1), \, \emptyset, \, \ldots , \, \emptyset , \, (1))$. Then note that the partitions $(1)$ is an $(s',t')$-core since $s'>1$ and $t'>1$. Thus $\la$ is a self-conjugate $(s'g,t'g)$-core that is not a $g$-core. Allowing $\ga^*$ to vary completes the proof.
\end{proof}

\begin{theorem}
Fix integers $g=8$ or $g\geq10$ and $t'>1$, and let $t=t'g$. Then the number $\psi^*_{t\backslash g}(n)$ of self-conjugate $t$-cores of $n$ which are not $g$-cores satisfies
\begin{enumerate}
\item $\psi^*_{t\backslash g}(n) \geq \dis \sum_{w=1}^{\left\lfloor \frac{n}{2g} \right\rfloor} Q^{t'}_{\frac{g}{2}}(w)$ if $g$ is even, and\\
\item $\psi_{t\backslash g}(n) \geq \dis \sum_{2w_1+w_2=1}^{\left\lfloor \frac{n}{g} \right\rfloor} Q^{t'}_{\frac{g-1}{2}}(w_1)f^*_{t'}(w_2)$ if $g$ is odd.
\end{enumerate}
In particular, if $g$ is even and $n \geq 2g$, or if $g$ is odd and $n \geq g$, then $\psi_{t^*\backslash g}(n) >0$ i.e there exists a self-conjugate $t$-core of $n$ which is not a $g$-core. In addition, if $n \geq 2g$, then $\psi^*_{t\backslash g}(n)\geq \frac{g}{2}$.

If, furthermore, $g$ is even and $t'\geq 4$, then $\psi^*_{t\backslash g}(n)\geq \frac{g}{2} \left\lfloor \frac{n}{2g} \right\rfloor$. If $g$ is odd and $t'=8$ or $t' \geq 10$, then $\psi^*_{t\backslash g}(n)\geq 1+ \frac{g-1}{2} \left\lfloor \frac{n}{2g} \right\rfloor$.
\end{theorem}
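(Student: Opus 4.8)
The plan is to run the counting argument from the proof of Theorem~\ref{genfunselfstcore} while imposing that the $g$-weight be positive, exactly as the proof of Theorem~\ref{f} imposes $w\geq1$ in the non-self-conjugate case. By Lemma~\ref{nec} and Lemma~\ref{2.1}, a self-conjugate $t$-core $\la$ of $n$ that is not a $g$-core is the same datum as a self-conjugate $g$-core ${\mathcal Cor}_g(\la)$ together with a $g$-quotient satisfying $\la_i=\la^{\vee}_{g-i-1}$ all of whose components are $t'$-cores (as in the proof of Theorem~\ref{stcore}), and whose $g$-weight $v$ is at least $1$ (the condition for $\la$ not to be a $g$-core). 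Splitting on the parity of $g$ exactly as in the proof of Theorem~\ref{genfunselfstcore}, I would obtain the exact identities
$$\psi^*_{t\backslash g}(n)=\sum_{w\geq1}Q^{t'}_{g/2}(w)\,f^*_g(n-2wg)\qquad(g\text{ even}),$$
$$\psi^*_{t\backslash g}(n)=\sum_{2w_1+w_2\geq1}Q^{t'}_{(g-1)/2}(w_1)\,f^*_{t'}(w_2)\,f^*_g\bigl(n-(2w_1+w_2)g\bigr)\qquad(g\text{ odd}),$$
the sums being finite since $n-vg\geq0$ forces $v\leq n/g$, which yields the upper limits $\lfloor n/(2g)\rfloor$ and $\lfloor n/g\rfloor$ respectively.

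Since $g=8$ or $g\geq10$, Theorem~\ref{fordsze} gives $f^*_g(m)\geq1$ for every $m\geq0$, and discarding these factors turns the two identities into the lower bounds (1) and (2). For the ``in particular'' assertions I would isolate the terms of smallest weight. If $g$ is even and $n\geq2g$ the sum contains the term $w=1$, contributing $Q^{t'}_{g/2}(1)=g/2$ (the $g/2$ tuples with a single box $(1)$ in one slot, a valid $t'$-core since $t'>1$), so $\psi^*_{t\backslash g}(n)\geq g/2>0$. If $g$ is odd and $n\geq g$, the only pair with $2w_1+w_2=1$ is $(0,1)$, contributing $Q^{t'}_{(g-1)/2}(0)\,f^*_{t'}(1)=1$, so $\psi^*_{t\backslash g}(n)>0$; if moreover $n\geq2g$ the pair $(1,0)$ also occurs, contributing $Q^{t'}_{(g-1)/2}(1)\,f^*_{t'}(0)=(g-1)/2$, and since these are distinct summands, $\psi^*_{t\backslash g}(n)\geq1+(g-1)/2\geq g/2$.

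For the final, sharper estimates I would use the inflation inequality $Q^{t'}_m(w)\geq m\,f_{t'}(w)$, valid because one may drop a single $t'$-core of $w$ into any one of the $m$ slots. If $g$ is even and $t'\geq4$, then $f_{t'}(w)\geq1$ for all $w$ by Theorem~\ref{Ono}, so $Q^{t'}_{g/2}(w)\geq g/2$ for every $w\geq1$, and summing over $1\leq w\leq\lfloor n/(2g)\rfloor$ gives $\psi^*_{t\backslash g}(n)\geq (g/2)\lfloor n/(2g)\rfloor$. If $g$ is odd and ``$t'=8$ or $t'\geq10$'' (hence $t'\geq4$), I would keep the single term $(w_1,w_2)=(0,1)$, contributing $1$ since $(1)$ is a self-conjugate $t'$-core, together with all terms $(w_1,0)$ with $1\leq w_1\leq\lfloor n/(2g)\rfloor$, each contributing $Q^{t'}_{(g-1)/2}(w_1)\geq(g-1)/2$; as these are distinct summands, this yields $\psi^*_{t\backslash g}(n)\geq1+\frac{g-1}{2}\lfloor n/(2g)\rfloor$.

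I expect no genuine obstacle: the proof is a routine adaptation of those of Theorems~\ref{genfunselfstcore} and~\ref{f}. The points that do require care are keeping the two parity cases and their ranges of summation straight, checking in each ``in particular'' estimate that the extracted low-weight terms are genuinely distinct summands (they are, carrying distinct $g$-weights), and remembering that the hypotheses $t'>1$, $t'\geq4$, and ``$t'=8$ or $t'\geq10$'' are invoked exactly where $f_{t'}(1)=1$, $f_{t'}(w)\geq1$, and $f^*_{t'}(w)\geq1$, respectively, are needed.
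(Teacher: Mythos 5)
Your proposal is correct and follows essentially the same route as the paper's own proof: the same exact identities obtained by restricting the count from Theorem \ref{genfunselfstcore} to positive $g$-weight, the same use of Theorems \ref{fordsze} and \ref{Ono} to bound $f^*_g$, $f_{t'}$ and $f^*_{t'}$ below by $1$, and the same extraction of the low-weight terms $w=1$, $(w_1,w_2)=(0,1)$ and $(1,0)$ together with the inequality $Q^{t'}_m(w)\geq m\,f_{t'}(w)$ for the refined bounds. No discrepancies worth noting.
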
 

\begin{proof}

Again, note that a partition $\la$ of $n$ is not a $g$-core if and only if it has $g$-weight $w \geq 1$. Following the proof of Theorem \ref{genfunselfstcore}, we see that the number $\psi^*_{t\backslash g}(n)$ of self-conjugate $t$-cores of $n$ which are not $g$ cores is given by
$$\psi^*_{t\backslash g}(n) = \dis \sum_{w \geq 1} Q^{t'}_{\frac{g}{2}}(w) f^*_g(n-2wg) \;  \; \mbox{if $g$ is even},$$
and
$$\psi^*_{t\backslash g}(n) = \dis \sum_{2w_1+w_2\geq1} Q^{t'}_{\frac{g-1}{2}}(w_1)f^*_{t'}(w_2) f^*_g(n-(2w_1+w_2)g) \; \; \mbox{if $g$ is odd}.$$
Since $g=8$ or $g \geq 10$, we get, by Theorem \ref{fordsze},
$$\psi^*_{t\backslash g}(n) = \dis \sum_{w = 1}^{\left\lfloor \frac{n}{2g} \right\rfloor} Q^{t'}_{\frac{g}{2}}(w) f^*_g(n-2wg) \geq \sum_{w = 1}^{\left\lfloor \frac{n}{2g} \right\rfloor} Q^{t'}_{\frac{g}{2}}(w)  \; \; \mbox{if $g$ is even},$$
while
$$\psi^*_{t\backslash g}(n) \geq \dis \sum_{2w_1+w_2 = 1}^{\left\lfloor \frac{n}{g} \right\rfloor} Q^{t'}_{\frac{g-1}{2}}(w_1)f^*_{t'}(w_2)  \;  \; \mbox{if $g$ is odd}.$$
Suppose first that $g$ is even. If $\left\lfloor \frac{n}{2g} \right\rfloor =0$, i.e. if $n < 2g$, then $\psi^*_{t\backslash g}(n)=0$, since $n-2wg\leq 0$ for each $w$ in the sum above. If, on the other hand, $n \geq 2g$, then
$$\psi^*_{t\backslash g}(n) \geq \sum_{w = 1}^{\left\lfloor \frac{n}{2g} \right\rfloor} Q^{t'}_{\frac{g}{2}}(w) \geq \sum_{w = 1}^{\left\lfloor \frac{n}{2g} \right\rfloor} \frac{g}{2} f_{t'}(w).$$
Since $t'>1$, we have $f_{t'}(1)=1$ (and $Q^{t'}_{\frac{g}{2}}(1)= \frac{g}{2}$), so that $$\psi^*_{t\backslash g}(n) \geq \frac{g}{2} f_{t'}(1)=\frac{g}{2},$$ as claimed. In particular, there exists a self-conjugate $t$-core of $n$ which is not a $g$-core.

If, furthermore, $t'\geq 4$, then, by Theorem \ref{Ono}, $f_{t'}(w) \geq 1$ for all $w$, and thus, in this case, $$\psi^*_{t\backslash g}(n) \geq   \sum_{w = 1}^{\left\lfloor \frac{n}{2g} \right\rfloor} \frac{g}{2} = \frac{g}{2} \left\lfloor \frac{n}{2g} \right\rfloor,$$ as claimed.

Suppose now that $g$ is odd. For any $w_1 \geq 1$, we have $Q^{t'}_{\frac{g-1}{2}}(w_1) \geq \frac{g-1}{2} f_{t'}(w_1)$ while, for $w_1=0$, we have $Q^{t'}_{\frac{g-1}{2}}(w_1)=1$. Separating the sum according to $w_1=0$ and $w_1 \geq 1$, we thus obtain
$$\begin{array}{rcl} \psi^*_{t\backslash g}(n) & \geq & \dis \sum_{w_2 = 1}^{\left\lfloor \frac{n}{g} \right\rfloor} Q^{t'}_{\frac{g-1}{2}}(0)f^*_{t'}(w_2) +       \dis \sum_{w_1 \geq 1 \atop 2w_1+w_2 = 1 }^{\left\lfloor \frac{n}{g} \right\rfloor}\frac{g-1}{2} f_{t'}(w_1) f^*_{t'}(w_2) \\
 & = & \dis \sum_{w_2 = 1}^{\left\lfloor \frac{n}{g} \right\rfloor} f^*_{t'}(w_2) +       \dis \sum_{w_1 \geq 1 \atop 2w_1+w_2 = 1 }^{\left\lfloor \frac{n}{g} \right\rfloor}\frac{g-1}{2} f_{t'}(w_1) f^*_{t'}(w_2) .\end{array}$$
 
Since $t'>1$, we have $f^*_{t'}(1)=1$ (corresponding to the self-conjugate $t'$-core $(1)$). In particular, if $n \geq g$, then $\psi^*_{t\backslash g}(n) \geq 1$, and there exists a self-conjugate $t$-core of $n$ which is not a $g$-core.

If $n <2g$, then this is all we can say, as the second sum is empty. If, on the other hand, $n \geq 2g$, then, for $w_1=1$ and $w_2=0$ (which does contribute to the second sum), we get $f_{t'}(w_1)=f_{t'}(1)=1$ and $f^*_{t'}(w_2)=f^*_{t'}(0)=1$ (corresponding to the empty partition). Hence, in this case, we have $\psi^*_{t\backslash g}(n) \geq 1 + \frac{g-1}{2}= \frac{g+1}{2} \geq \frac{g}{2}$, as claimed.

If, furthermore, $t'=8$ or $t'\geq 10$, then, by Theorem \ref{Ono} and Theorem \ref{fordsze}, $f_{t'}(w_1) \geq 1$ and $f^*_{t'}(w_2) \geq 1$ for all $w_1$ and $w_2$, and thus, in this case
$$\psi^*_{t\backslash g}(n) \geq 1 + \dis \sum_{w_1 \geq 1 \atop 2w_1+w_2 = 1 }^{\left\lfloor \frac{n}{g} \right\rfloor}\frac{g-1}{2} \geq 1 + \dis \sum_{w_1 = 1 }^{\left\lfloor \frac{n}{2g} \right\rfloor}\frac{g-1}{2} = 1 + \frac{g-1}{2} \left\lfloor \frac{n}{2g} \right\rfloor,$$
as claimed.
\end{proof}

\subsection{$\bar{t}\backslash \bar{g}$-cores.}
We have the following bar-analogues of Theorem \ref{infinitelymanystcores} and Theorem \ref{f}.

\begin{theorem}\label{infinitelymanystbarcores}
Let $s',t',g$ be odd, nontrivial integers. Then there are infinitely many $(\overline{s'g},\overline{t'g})$-cores which are not $\g$-cores.
\end{theorem}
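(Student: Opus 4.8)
The plan is to mimic the proof of Theorem \ref{infinitelymanystcores}, but working with the $\bar g$-core/$\bar g$-quotient decomposition instead of the ordinary one. First I would recall that, since $g$ is odd and nontrivial, there are infinitely many $\bar g$-core (bar-)partitions; this is the starting pool that will be allowed to vary. For each such $\bar g$-core $\bar\gamma$, I would invoke the bijection of Lemma 2.10 (the bar-analogue of Lemma \ref{2.1}): a bar-partition $\la$ is completely and uniquely determined by the pair $(\cor_{\bar g}(\la), \quo_{\bar g}(\la))$, where $\quo_{\bar g}(\la)=(\la_0,\la_1,\ldots,\la_{\frac{g-1}{2}})$ with $\la_0$ a bar-partition. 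So I would define $\la$ to be the unique bar-partition with $\cor_{\bar g}(\la)=\bar\gamma$ and $\quo_{\bar g}(\la)$ equal to some fixed \emph{nonempty} quotient, e.g. $\la_1=(1)$ and all other components empty (or, if one prefers $\la_0$ to carry the weight, $\la_0=(1)$).

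Next I would check that this $\la$ is an $(\overline{s'g},\overline{t'g})$-core. By Theorem \ref{stbarcore}, this amounts to checking that $\la_0$ is an $(\bar{s'},\bar{t'})$-core and that $\la_1,\ldots,\la_{\frac{g-1}{2}}$ are $(s',t')$-cores. For the choice $\la_1=(1)$, $\la_0=\emptyset$: the empty bar-partition is trivially an $(\bar{s'},\bar{t'})$-core, and the partition $(1)$ is an $(s',t')$-core precisely because $s'>1$ and $t'>1$ (it has a single hook, of length $1$, and $1\notin\{s',t'\}$). For the alternative choice $\la_0=(1)$: $(1)$ is an $(\bar{s'},\bar{t'})$-core since $s',t'>1$ means neither equals $1$, so the unique bar of $\la_0$, of length $1$, is harmless. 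Either way the hypothesis $s'>1$, $t'>1$ is exactly what is needed. Note, as in the earlier proof, that one does not actually need $g=\gcd(s'g,t'g)$ to apply Theorem \ref{stbarcore}.

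Then I would observe that $\la$ is \emph{not} a $\bar g$-core: since $\quo_{\bar g}(\la)\neq(\emptyset,\ldots,\emptyset)$, the $\bar g$-weight of $\la$ is at least $1$, so $\la$ has a bar of length divisible by $g$, hence (by Proposition \ref{barbij} with $k$ such that $kg=g$, i.e. the bar of length $g$ corresponding to the length-$1$ bar/hook in the quotient) $\la$ has a bar of length $g$. Finally, distinct choices of $\bar\gamma$ give distinct $\la$, by the uniqueness in Lemma 2.10 (the core of $\la$ recovers $\bar\gamma$), so letting $\bar\gamma$ range over the infinitely many $\bar g$-cores produces infinitely many distinct $(\overline{s'g},\overline{t'g})$-cores that are not $\bar g$-cores.

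The only genuine subtlety — the ``hard part,'' though it is minor — is making sure the fixed quotient is a legitimate $\bar g$-quotient, which requires the first component to be a bar-partition while the rest are ordinary partitions; this is why I would phrase the construction carefully and pick a quotient where the nonzero component is in a position whose type ($\la_0$ = bar, $\la_i$ = ordinary) matches the partition $(1)$ (which is both). Everything else is a direct transcription of the $g$-core argument into the $\bar g$-core setting, using Theorem \ref{stbarcore} in place of Theorem \ref{stcore} and Lemma 2.10 in place of Lemma \ref{2.1}.
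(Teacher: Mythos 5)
Your proposal is correct and follows essentially the same route as the paper: fix the $\bar g$-quotient to be $((1),\emptyset,\ldots,\emptyset)$ (the paper places the $(1)$ in the $\la_0$ slot, one of your two suggested options), apply Theorem \ref{stbarcore} to see the resulting bar-partition is an $(\overline{s'g},\overline{t'g})$-core, note the nonempty quotient forces it not to be a $\bar g$-core, and let the $\bar g$-core vary over the infinitely many possibilities. Your extra care about which slot of the quotient holds a bar-partition versus an ordinary partition is a reasonable precaution but, as you note, $(1)$ is both, so either placement works.
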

\begin{proof}
There are infinitely many $\bar{g}$-cores  for $g>1$. For any $\g$-core $\ga$, consider the (completely and uniquely defined) bar-partition $\la$ with ${\mathcal Cor}_g(\la)=\ga$ and ${\mathcal Quo}_g(\la)=(\la_0, \, \la_1, \, \ldots , \, \la_{\frac{g-1}{2}})=( (1), \, \emptyset, \, \ldots , \, \emptyset)$. Then $\la_0$ is an $(\bar{s'},\bar{t'})$-core (since $s',t'\neq1$), and $\la_1, \, \ldots , \, \la_{\frac{g-1}{2}}$ are certainly $(s',t')$-cores. Thus, by Theorem \ref{stbarcore}, $\la$ is a $(\overline{s'g},\overline{t'g})$-core (note here that Theorem \ref{stbarcore} does not require $g=\text{gcd}(s'g,t'g)$). Also, since ${\mathcal Quo}_{\bar{g}}(\la) \neq (  \emptyset, \, \ldots , \, \emptyset)$, the bar-partition $\la$ is certainly not a $\g$-core. Allowing $\ga$ to vary produces an infinite number of (distinct) $(\overline{s'g},\overline{t'g})$-cores which are not $\bar{g}$-cores.
\end{proof}

\nin
For odd integers $g$ and $t'$, let $Q^{\bar{t}'}_g(w)$ be the number of $\g$-quotients $(\la_0, \la_1, \ldots , \la_{\frac{g-1}{2}})$ of weight $w$ such that $\la_0$ is a $\bar{t}'$-core and $\la_1, \, \ldots , \, \la_{\frac{g-1}{2}}$ are $t'$-cores.

\begin{theorem} \label{tbarnotg} Let $g \geq 7$ and $t'>1$ be odd integers, and let $t=t'g$. Then, for any $n \geq g$, the number $\psi_{\bar{t}\backslash {\bar{g}}}(n)$ of $\bar{t}$-core partitions of $n$ which are not $\bar{g}$-cores satisfies
$$\psi_{\bar{t}\backslash {\bar{g}}}(n) \geq \dis \sum_{w= 1}^{\left\lfloor \frac{n}{g} \right\rfloor} Q^{\bar{t}'}_g(w).$$
In particular, $\psi_{\bar{t}\backslash {\bar{g}}}(n) \geq \frac{g+1}{2}$ and there exists a $\bar{t}$-core partition of $n$ which is not a $\g$-core. 
\end{theorem}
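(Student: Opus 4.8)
The plan is to mirror the structure of the proof of Theorem \ref{f}, replacing the $g$-core/$g$-quotient machinery by its bar-analogue. First I would observe that a bar-partition $\la$ of $n$ fails to be a $\g$-core precisely when its $\g$-weight $w$ is at least $1$. Combining Corollary \ref{genfunstbarcore}'s underlying decomposition (equation (\ref{ok2})) with the restriction $w\geq 1$, the number of $\bar{t}$-cores of $n$ that are not $\g$-cores is
$$
\psi_{\bar{t}\backslash {\bar{g}}}(n) = \dis \sum_{w \geq 1} Q^{\bar{t}'}_{g}(w)\, f_{\bar{g}}(n-gw) = \dis \sum_{w = 1}^{\left\lfloor \frac{n}{g} \right\rfloor} Q^{\bar{t}'}_{g}(w)\, f_{\bar{g}}(n-gw),
$$
the truncation of the sum being justified because $f_{\bar{g}}(n-gw)=0$ once $gw > n$. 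Here I am using Theorem \ref{stbarcore} to identify which $\g$-quotients are admissible, namely those counted by $Q^{\bar{t}'}_g(w)$.

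Next I would invoke the positivity of $\bar{g}$-cores, Theorem \ref{kimingtime}, which applies since $g \geq 7$ is odd: it gives $f_{\bar{g}}(n-gw) \geq 1$ for every $w$ in the range $1 \leq w \leq \lfloor n/g \rfloor$ (as then $0 \leq n-gw$, and a $\bar{g}$-core of any nonnegative integer exists). Dropping these factors yields the claimed lower bound $\psi_{\bar{t}\backslash {\bar{g}}}(n) \geq \sum_{w=1}^{\lfloor n/g \rfloor} Q^{\bar{t}'}_g(w)$. For the ``in particular'' clause, note that when $n \geq g$ the sum is nonempty and contains the term $w=1$. I would then estimate $Q^{\bar{t}'}_g(1)$ directly: a $\g$-quotient of weight $1$ consists of a single box placed in one component, and since $s',t'>1$ both $\la_0=(1)$ (a $\bar{t}'$-core, as $t' \neq 1$) and $\la_i=(1)$ for $1 \leq i \leq \frac{g-1}{2}$ (a $t'$-core) are admissible; this gives $1 + \frac{g-1}{2} = \frac{g+1}{2}$ distinct admissible $\g$-quotients of weight $1$. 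Hence $\psi_{\bar{t}\backslash {\bar{g}}}(n) \geq Q^{\bar{t}'}_g(1) = \frac{g+1}{2} \geq 1$, so such a bar-partition exists.

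The only genuinely delicate point is bookkeeping in the count $Q^{\bar{t}'}_g(1) = \frac{g+1}{2}$: the $\g$-quotient has $\frac{g+1}{2}$ components $(\la_0,\la_1,\ldots,\la_{\frac{g-1}{2}})$, of which $\la_0$ is constrained to be a bar-partition and the rest are ordinary partitions, and at weight $1$ each of these $\frac{g+1}{2}$ slots admits exactly one filling, namely $(1)$. I should double-check that the ``bar-partition'' constraint on $\la_0$ does not reduce the count (it does not, since $(1)$ has distinct parts) and that the admissibility conditions from Theorem \ref{stbarcore} are satisfied in every slot (they are, because $s' > 1$ and $t' > 1$ rule out $1$ being a forbidden bar- or hook-length). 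Everything else is a verbatim transcription of the argument for Theorem \ref{f}, with Theorem \ref{Ono} replaced by Theorem \ref{kimingtime} and the hypothesis $g \geq 4$ replaced by the hypothesis $g \geq 7$ odd needed for bar-core positivity.
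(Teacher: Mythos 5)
Your proposal is correct and follows essentially the same route as the paper: decompose $\psi_{\bar{t}\backslash\bar{g}}(n)$ via the $\bar{g}$-core/$\bar{g}$-quotient as in equation (\ref{ok2}) restricted to weight $w\geq 1$, apply Theorem \ref{kimingtime} to bound each $f_{\bar{g}}(n-gw)$ below by $1$, and then compute $Q^{\bar{t}'}_g(1)=\frac{g+1}{2}$ from the fact that $(1)$ is the unique $\bar{t}'$-core and unique $t'$-core of $1$ when $t'>1$. Your extra justifications (the truncation of the sum and the explicit count of the $\frac{g+1}{2}$ weight-one quotients) are exactly the details the paper leaves implicit.
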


\begin{proof}
Note that a bar-partition $\la$ is a $\g$-core if and only if it has $\g$-weight 0. We thus have
$$\psi_{\bar{t}\backslash {\bar{g}}}(n)  =  \dis \sum_{w\geq 1} Q^{\bar{t}'}_g(w) f_{\g}(n-gw) =  \dis \sum_{w= 1}^{\left\lfloor \frac{n}{g} \right\rfloor} Q^{\bar{t}'}_g(w) f_{\g}(n-gw)  \geq  \dis \sum_{w= 1}^{\left\lfloor \frac{n}{g} \right\rfloor} Q^{\bar{t}'}_g(w)$$
(since, by Theorem \ref{kimingtime}, $f_{\g}(n-gw) \geq 1$ for all $1 \leq w \leq \left\lfloor \frac{n}{g} \right\rfloor$).

In particular, $\psi_{\bar{t}\backslash {\bar{g}}}(n) \geq Q^{\bar{t}'}_g(1)$, and $Q^{\bar{t}'}_g(1)=\frac{g+1}{2}$ since $t'>1$, so that $(1)$ is both the only $\bar{t}'$-core of $1$ and the only $t'$-core of 1. The result follows immediately.
\end{proof}

\section{Bijections}
Our main result in this section is Theorem \ref{Yyang}: a bijection between $(\bar{s},\bar{t})$-core partitions and $(s,t)^*$-core partitions when $g\geq 1$ and $s,t>1$ are odd. In Section 4.1, we reprove a recent bijection of J. Yang between $\bar{t}$-core partitions and self-conjugate $t$-core partitions for odd $t$ (Theorem \ref{Yangg}). In Section 4.2 we show that when $s,t>1$ are odd and $g=1$, then self-conjugate $(s,t)$-core partitions and $(\bar{s},\bar{t})$-core partitions are in bijection. These results, combined with results from Section 2.2 and Section 2.3, give, are the tools we use to construct our bijection in Section 4.3.

[We note that J. Wang and J. Yang \cite{WY} have recently extended the Yin-Yang diagram of Bessenrodt and Olsson to the case when $s$ is even and $t$ is odd. We do not consider this case here, although the calculations will be similar.]

\subsection{A bijection between $t^*$-cores and $\bar{t}$-cores}

The following result is Bijection 2 in Garvan, Kim and Stanton \cite{GKS}.

\begin{lemma}\label{JK}
For any integer $t \geq 1$, there is a correspondence $\varphi$ between the set of $t$-core partitions and $$\left \{(a_0, \, \ldots , \, a_{t-1}) \, | \, a_i \in \Z \;  0 \leq i \leq t-1 \; \mbox{and} \; \dis \sum_{i=0}^{t-1} a_i=0 \right \}.$$

Furthermore, if $\varphi(\la)=(a_0, \, \ldots , \, a_{t-1})$ for some $t$-core $\la$, then the conjugate partition $\la^{\vee}$ (which is also a $t$-core) satisfies $\varphi(\la^{\vee})=(-a_{t-1}, \, \ldots , \, -a_0)$.
\end{lemma}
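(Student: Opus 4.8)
The plan is to recall the explicit combinatorial description of the correspondence $\varphi$ coming from abacus displays (beta-sets) and then simply trace how conjugation of a partition acts on that data. Concretely, for a $t$-core $\la$ one takes a first-column hook (beta-)set $X$ of $\la$ of size divisible by $t$, distributes the beads of $X$ onto $t$ runners of an abacus according to residue mod $t$, and lets $a_i$ be the (signed) number of beads on runner $i$ relative to the "empty" configuration; the $t$-core condition forces each runner to be ``flush'' (all beads pushed up), so the configuration is determined by the vector $(a_0,\dots,a_{t-1})$, and the normalization $\sum_i a_i = 0$ reflects the choice that $|X|$ is the ``correct'' multiple of $t$. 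This is standard (James--Kerber, or \cite[Section 3]{O3}), so I would state it as the definition of $\varphi$ and cite accordingly rather than reprove bijectivity.

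The key step is the behaviour under conjugation. First I would record the classical fact that if $X = \{x_1 > x_2 > \cdots\}$ is a first-column hook set for $\la$ (a beta-set of size $N$), then a first-column hook set for $\la^{\vee}$ of the same size $N$ is the ``complement-and-reverse'' set $X^{\vee} = \{\, N-1-x \;:\; x \in \{0,1,\dots\}\setminus X \,\}$ (equivalently, the complement of $X$ inside $\{0,\dots\}$, negated and shifted). The point is that the positions $x$ and $N-1-x$ lie on runners $i$ and $-1-i \equiv t-1-i \pmod t$ when $N \equiv 0 \pmod t$, so complementation on the abacus swaps runner $i$ with runner $t-1-i$ and also turns ``beads'' into ``gaps.'' Turning gaps into beads on a flush runner negates the signed bead-count, and relabelling runner $i$ as runner $t-1-i$ reverses the index order. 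Combining these two effects gives exactly $\varphi(\la^{\vee}) = (-a_{t-1}, -a_{t-2}, \dots, -a_0)$, and the identity $\sum_i (-a_{t-1-i}) = -\sum_i a_i = 0$ confirms consistency.

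The main obstacle, and the only place where care is genuinely needed, is bookkeeping the offsets: one must choose the size $N$ of the beta-set to be a multiple of $t$ (indeed a large enough one), check that complementation within $\{0,1,\dots,N-1\}$ — rather than within all of $\mathbb{Z}_{\ge 0}$ — produces a valid beta-set for $\la^{\vee}$, and verify that the residue $N-1-x \bmod t$ really is $t-1-(x \bmod t)$ when $t \mid N$. Once the abacus is set up with the standard ``runner $i$ starts at position $i$'' convention, each of these is a one-line check, and the "empty partition maps to the zero vector, and complementation fixes it" sanity check pins down all sign and index conventions. I expect the whole argument to be about half a page.

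If one prefers to avoid abacus language entirely, an alternative is to use the $t$-quotient/$t$-core machinery already set up in the excerpt: Lemma~\ref{nec0} describes conjugation on the $t$-quotient via $\la^{\vee}_i = \la_{t-i-1}$ together with conjugation of $\mathcal{Cor}_t$, but since a $t$-core has trivial quotient the statement degenerates, and the content of $\varphi$ for $t$-cores is precisely the ``residue vector'' refinement of the core itself; I would still ultimately reduce to the beta-set complementation identity, so the abacus proof above is the cleanest route.
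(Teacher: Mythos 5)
The paper does not actually prove this lemma: it quotes it as Bijection~2 of Garvan--Kim--Stanton \cite{GKS}, who establish it via $t$-residue diagrams, and then remarks that the statement ``can also be derived from the study of so-called $t$-abacus configurations for $t$-cores (see \cite[Chapter 2]{JK}).'' Your proposal is precisely that second, abacus-theoretic derivation carried out in detail, so you are supplying a proof where the paper supplies a citation; the two routes are genuinely different in that \cite{GKS} reads off the vector $(a_0,\dots,a_{t-1})$ from the $t$-residue diagram and the extended $t$-residue diagram of the rim, whereas you read it off from signed bead counts on flush runners. Your core idea --- conjugation is complement-and-reverse on the edge sequence, which swaps runner $i$ with runner $t-1-i$ and negates the flush bead count --- is correct and does yield $\varphi(\la^{\vee})=(-a_{t-1},\dots,-a_0)$. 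The one detail you have flagged but not quite pinned down is the complementation set: the complement of $X$ inside all of $\mathbb{Z}_{\ge 0}$ is infinite, and the complement inside $\{0,\dots,N-1\}$ only works if $X\subseteq\{0,\dots,N-1\}$, which a size-$N$ beta-set of a nonempty partition never satisfies. The clean fix is to complement inside $\{0,\dots,M-1\}$ for any $M$ with $X\subseteq\{0,\dots,M-1\}$ and $t\mid M$ (e.g.\ $M=2N$ for $N$ large), in which case $x\mapsto M-1-x$ sends residue $i$ to residue $t-1-i$ and the image is a beta-set for $\la^{\vee}$ of size $M-N$, again a multiple of $t$, so the normalized bead counts $a_i$ are still the ones $\varphi$ uses; alternatively one works with the doubly infinite Maya sequence, where no truncation is needed. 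With that adjustment your argument is complete and is a perfectly good substitute for the citation.
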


[Note: Garvan, Kim and Stanton prove the existence of $\varphi$ using $t$-residue diagrams. It can also be derived from the study of so-called $t${\it -abacus configurations} for $t$-cores (see \cite[Chapter 2]{JK}).] 
The following follows immediately from the description of $\varphi(\la^{\vee})$ in Lemma \ref{JK}.

\begin{corollary}\label{bij}
Let $t>1$ be an odd integer. Then there is a correspondence between the set of self-conjugate $t$-cores and $$\left\{ (a_0, \, \ldots , \, a_{\frac{t-3}{2}}, \, 0 , \, -a_{\frac{t-3}{2}}, \, \ldots , \, -a_0) \; | \; a_i \in \Z \; \; , 0 \leq i \leq \frac{t-3}{2}\right \}.$$

\end{corollary}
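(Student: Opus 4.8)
The plan is to derive Corollary~\ref{bij} directly from Lemma~\ref{JK} by specializing the conjugation formula to self-conjugate partitions. First I would observe that if $\la$ is a self-conjugate $t$-core, then $\la = \la^{\vee}$, so applying $\varphi$ to both sides and using the displayed formula $\varphi(\la^{\vee}) = (-a_{t-1}, \ldots, -a_0)$ from Lemma~\ref{JK}, we obtain $(a_0, \ldots, a_{t-1}) = (-a_{t-1}, \ldots, -a_0)$. Reading off coordinate $i$ gives $a_i = -a_{t-1-i}$ for all $0 \le i \le t-1$. Since $t$ is odd, the middle index is $i = \frac{t-1}{2}$, which is fixed by $i \mapsto t-1-i$, forcing $a_{\frac{t-1}{2}} = -a_{\frac{t-1}{2}}$, i.e.\ $a_{\frac{t-1}{2}} = 0$; the remaining coordinates come in pairs $\{i, t-1-i\}$ with $a_{t-1-i} = -a_i$, so the tuple is determined by $a_0, \ldots, a_{\frac{t-3}{2}}$ and has exactly the shape $(a_0, \ldots, a_{\frac{t-3}{2}}, 0, -a_{\frac{t-3}{2}}, \ldots, -a_0)$ claimed. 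Note this automatically lies in the image of $\varphi$ since its coordinates sum to zero.

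Next I would argue the converse: every tuple of that symmetric form arises from a self-conjugate $t$-core. Given such a tuple $\mathbf{a} = (a_0, \ldots, a_{\frac{t-3}{2}}, 0, -a_{\frac{t-3}{2}}, \ldots, -a_0)$, it sums to zero, so by the surjectivity of $\varphi$ (part of Lemma~\ref{JK}) there is a $t$-core $\la$ with $\varphi(\la) = \mathbf{a}$. Then $\varphi(\la^{\vee}) = (-a_{t-1}, \ldots, -a_0)$, and because $\mathbf{a}$ satisfies $a_i = -a_{t-1-i}$ by construction, this equals $\mathbf{a}$ again; since $\varphi$ is a bijection (a correspondence), $\la^{\vee} = \la$, so $\la$ is self-conjugate. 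Restricting $\varphi$ to self-conjugate $t$-cores therefore gives a bijection onto the stated set, parametrized by the free coordinates $a_0, \ldots, a_{\frac{t-3}{2}} \in \Z$.

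I do not anticipate a genuine obstacle here: the entire content is the elementary symmetry bookkeeping of the fixed points of the involution $i \mapsto t-1-i$ on $\{0, 1, \ldots, t-1\}$, combined with the conjugation formula already recorded in Lemma~\ref{JK}. The only point requiring a sentence of care is the parity of $t$, which is what guarantees a \emph{unique} fixed index (hence a single forced zero coordinate) rather than zero or two fixed indices; this is exactly the hypothesis $t > 1$ odd in the statement. If one wanted to be fully careful one might also remark that distinct free tuples $(a_0, \ldots, a_{\frac{t-3}{2}})$ give distinct full symmetric tuples and hence distinct self-conjugate $t$-cores, which is immediate since the embedding into $\Z^t$ is clearly injective.
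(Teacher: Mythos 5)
Your proposal is correct and is exactly the argument the paper has in mind: the paper states the corollary "follows immediately from the description of $\varphi(\la^{\vee})$ in Lemma \ref{JK}", and you have simply written out the fixed-point bookkeeping of the involution $i \mapsto t-1-i$ that the paper leaves implicit. No difference in approach, just a fully spelled-out version of the same one-line deduction.
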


Recall $\Delta(\lambda)=\{h_{ii}\}$ is the set of diagonal hook lengths of a self-conjugate partition $\lambda$. Ford, Mai, and Sze (Proposition 3, \cite{fms}) have another characterization of self-conjugate $t$-core partitions, in terms of $\Delta(\lambda)$, which we rewrite in the following way.
\begin{lemma} \label{fordzag} Let $\lambda$ be a $t$-core partition labeled by $\{ (a_0, \, \ldots , \, a_{t-1})\}$. Then $\lambda$ is self-conjugate if and only if for every $a_{\gamma}>0$:
\begin{enumerate}
\item $2(\gamma+\ell t)+1\in \Delta(\lambda)$ for all $0\leq\ell< a_{\gamma}$, and
\item If $h\in\Delta(\lambda)$ and $h+h'\equiv 0\pmod{2t}$ then $h'\not\in \Delta(\lambda)$.
\end{enumerate} 
\end{lemma}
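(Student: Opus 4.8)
The plan is to push everything through the $t$-abacus attached to the label $(a_0,\dots,a_{t-1})$ of $\la$: with the normalization under which~(1) is phrased, the beta-set of $\la$ meets runner $i$ in $\{\,i+kt : k<a_i\,\}$, so the non-negative beta-numbers of $\la$ form the set
$$A:=\bigsqcup_{i\,:\,a_i>0}\{\,i+\ell t : 0\le \ell<a_i\,\},$$
and $|A|$ equals the Durfee size $d$ of $\la$ (the number of $j$ with $\la_j\ge j$). Two standard facts drive the argument. First, by Lemma~\ref{JK} the conjugate $\la^{\vee}$ carries the label $(-a_{t-1},\dots,-a_0)$, so $\la$ is self-conjugate if and only if $a_i=-a_{t-1-i}$ for all $i$. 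Second, writing $\alpha_j=\la_j-j$ and $\beta_j=\la'_j-j$ for the arm- and leg-lengths of the $j$-th principal hook $(1\le j\le d)$, one has $\Delta(\la)=\{\,\alpha_j+\beta_j+1 : 1\le j\le d\,\}$, the sequences $(\alpha_j)$ and $(\beta_j)$ are strictly decreasing, $\{\alpha_1,\dots,\alpha_d\}=A$, and $\la$ is self-conjugate precisely when $\alpha_j=\beta_j$ for all $j$ (the Frobenius symbol being symmetric). In particular $|\Delta(\la)|=d=|A|$.

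For the direction ``$\Rightarrow$'', suppose $\la$ is self-conjugate. Then $\beta_j=\alpha_j$, so $\Delta(\la)=\{\,2\alpha_j+1:1\le j\le d\,\}=\{\,2(\gamma+\ell t)+1 : a_\gamma>0,\ 0\le\ell<a_\gamma\,\}$, which is exactly~(1). For~(2), every element of $\Delta(\la)$ is congruent to $2\gamma+1\pmod{2t}$ for some $\gamma$ with $a_\gamma>0$; for $0\le\gamma,\gamma'\le t-1$ the sum $(2\gamma+1)+(2\gamma'+1)$ is divisible by $2t$ exactly when $\gamma'=t-1-\gamma$, and then $a_{\gamma'}=a_{t-1-\gamma}=-a_\gamma<0$, so no matching $h'$ lies in $\Delta(\la)$; this also covers $h'=h$, which would force $\gamma=(t-1)/2$ and hence $a_\gamma=-a_\gamma$. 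So~(2) holds.

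For ``$\Leftarrow$'' it is enough to use~(1). Condition~(1) asserts precisely that $\{\,2a+1:a\in A\,\}\subseteq\Delta(\la)$; as both sides have cardinality $d$, they coincide, so $\{\,2\alpha_j+1\,\}=\{\,\alpha_j+\beta_j+1\,\}$ as sets. Both are strictly decreasing sequences of length $d$, so matching them term by term gives $2\alpha_j+1=\alpha_j+\beta_j+1$, i.e. $\alpha_j=\beta_j$ for all $j$; thus the Frobenius symbol of $\la$ is symmetric and $\la$ is self-conjugate. (In particular~(2) is automatic once~(1) holds, but it is part of the intended characterization of $\Delta(\la)$.)

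The genuine content sits entirely in the first paragraph: correctly matching the combinatorial label $(a_i)$ to the $t$-abacus so as to read off both $A$ and the principal hook lengths from it, and invoking the compatibility of conjugation with the labeling (Lemma~\ref{JK}). Given that dictionary, each implication is a one-line counting-and-sorting argument, and the point most in need of care is the bookkeeping of normalizations — which residue sits on which runner and the off-by-one in ``$k<a_i$'' — which I would pin down by checking against the empty partition and the worked example in the text before committing to signs.
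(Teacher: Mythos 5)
Your argument is correct, but note that the paper itself offers no proof of Lemma~\ref{fordzag}: it is imported verbatim (in rewritten form) from Proposition~3 of Ford--Mai--Sze \cite{fms}, so there is nothing internal to compare against. What you have written is in effect a self-contained proof of that cited result, and it is the standard one: translate the Garvan--Kim--Stanton label $(a_0,\dots,a_{t-1})$ into the $t$-runner abacus, identify the nonnegative beta-numbers with the arm lengths $\alpha_j=\la_j-j$ of the principal hooks, and use the compatibility of conjugation with the labeling (which the paper does record, in Lemma~\ref{JK}). Your normalization checks out against Example~\ref{GarveyMarcus}: for $t=3$ and $(a_0,a_1,a_2)=(2,0,-2)$ one gets $A=\{0,3\}$ and $\Delta(\la)=\{1,7\}$, as claimed there. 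The two places where a reader might want more detail are both sound: the counting step $|A|=d=|\Delta(\la)|$, which upgrades the inclusion in~(1) to an equality of sets, and the term-by-term matching of the two strictly decreasing enumerations $\bigl(2\alpha_j+1\bigr)$ and $\bigl(\alpha_j+\beta_j+1\bigr)$, which forces $\alpha_j=\beta_j$ and hence symmetry of the Frobenius symbol. Your observation that~(2) is redundant for the converse (it follows from~(1) plus the cardinality count, and in the forward direction from $a_{t-1-\gamma}=-a_\gamma$) is also accurate; in \cite{fms} condition~(2) is carried along because it is what makes the characterization usable for building the diagonal hooks diagram, which is how the present paper uses it in Section~4.2.
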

In particular Lemma \ref{fordzag} (1) tells us how to recover $\Delta(\lambda)$, the set of diagonal hooks of a self-conjugate $t$-core partition, from its labeling $t$-tuple. Olsson has given the following $(\frac{t-1}{2})$-tuple characterization of $\bar{t}$-core partitions (Proposition (4.1) and Proposition (4.2), \cite{O3}).
\begin{lemma} \label{oldy} Let $t$ be an odd integer. Then the $\bar{t}$-core partitions can be encoded as $(\frac{t-1}{2})$-tuples: $(b'_1,b'_2,\cdots,b'_{\frac{t-1}{2}})$, where $b'_{i}\in {\mathbb Z}$. In particular, the set of parts of $\lambda$ can be recovered in the following way:
\begin{enumerate}
\item If $b'_i>0$ then $i+\ell t$ is a part of $\lambda$ for $0\leq \ell< b'_i$.\\
\item If $b'_i<0$ then $(t-i)+\ell t$ is a part of $\lambda$ for $0\leq \ell< |b'_i|$.
\end{enumerate}
\end{lemma}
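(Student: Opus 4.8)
The plan is to derive the encoding directly from the elementary moves that remove a bar of length $t$, working throughout with the set of parts $A\subseteq\Z_{>0}$ of a bar-partition $\la$; this is essentially the argument behind \cite[Propositions (4.1) and (4.2)]{O3}. I would first recall that $\la$ contains a bar of length $t$ if and only if one of the following moves on $A$ is possible, each producing the set of parts of a bar-partition of size $|\la|-t$:
\begin{enumerate}
\item[(a)] $t\in A$; remove $t$ from $A$;
\item[(b)] there exist $a,b\in A$ with $a+b=t$; remove both $a$ and $b$;
\item[(c)] there exists $a\in A$ with $a>t$ and $a-t\notin A$; replace $a$ by $a-t$.
\end{enumerate}
Thus $\la$ is a $\bar t$-core exactly when none of (a), (b), (c) applies, and the whole task is to translate that into the shape of $A$.

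Next I would organise $A$ by residues modulo $t$. Since $t$ is odd, the nonzero residues split into the pairs $\{i,\,t-i\}$ for $1\le i\le\frac{t-1}{2}$, leaving the residue $0$. Writing $c_j\ge 0$ for the number of parts of $\la$ in residue class $j$, I would argue: (i) if some positive multiple of $t$ lies in $A$, then either $t\in A$ (move (a)) or, taking the least such multiple $mt$, one has $m\ge 2$ and $(m-1)t\notin A$ with $mt>t$ (move (c)); so a $\bar t$-core has $c_0=0$. (ii) For a nonzero residue $j$, a part $a\equiv j$ with $a>t$ triggers move (c) unless $a-t$ is again a part; iterating downward, absence of move (c) in class $j$ forces the parts $\equiv j$ to be exactly $j,\,j+t,\ldots,\,j+(c_j-1)t$. (iii) If $a\equiv i$ and $b\equiv t-i$ with $a+b=t$, then necessarily $a=i$ and $b=t-i$, so move (b) is available on the pair $\{i,t-i\}$ exactly when $i\in A$ and $t-i\in A$; hence a $\bar t$-core has, for each pair, at most one of $c_i,\,c_{t-i}$ positive. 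Setting $b'_i:=c_i$ when $c_i>0$, $b'_i:=-c_{t-i}$ when $c_{t-i}>0$, and $b'_i:=0$ otherwise, and unwinding (i)--(iii), recovers precisely the reconstruction rules (1)--(2) of the statement.

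For the converse I would check that the set of parts built from an arbitrary tuple $(b'_1,\ldots,b'_{(t-1)/2})\in\Z^{(t-1)/2}$ via (1)--(2) consists of distinct positive integers (the parts used in the pair $\{i,t-i\}$ all lie in a single residue class, and different pairs give disjoint classes), and that none of (a), (b), (c) acts on it (no part is $\equiv 0$; if $a$ and $b$ summed to $t$ they would lie in residues $i$ and $t-i$ of the same pair, only one of which is ever used; and each part $a>t$ has $a-t$ again a part), so it is genuinely a $\bar t$-core. The two assignments $\la\mapsto(b'_1,\ldots,b'_{(t-1)/2})$ and tuple $\mapsto$ set of parts are then visibly mutually inverse.

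I expect the main obstacle to be step (iii), and more broadly making certain that the three moves cannot be triggered by beads that are not at the bottom of their residue class: the arithmetic facts ``$a+b=t$ with $a\equiv i$, $b\equiv t-i$ forces $a=i$, $b=t-i$'' and ``$a\equiv j$ with $a>t$ and $a-t\notin A$ always yields a move'' are where the real content lies. These are also the only places the hypothesis that $t$ is odd is used, since it guarantees the residue classes pair up cleanly with no self-paired residue $i=t-i$ (and, incidentally, makes move (b) with $a=b$ impossible).
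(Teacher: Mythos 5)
Your proof is correct: the reduction of the three $t$-bar-removal moves to conditions on the parts modulo $t$, the pairing of the nonzero residues $\{i,t-i\}$ (using that $t$ is odd), and the resulting signed counts $b'_i$ constitute precisely the standard argument. The paper itself offers no proof of this lemma --- it cites Olsson's Propositions (4.1) and (4.2) in \cite{O3} --- and your argument is essentially the one found there, so there is nothing to flag.
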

We can now give a succinct proof of Theorem \ref{Yangg}.

\begin{proof}[{\bf{Proof of Theorem \ref{Yangg}}}]
By Corollary \ref{bij}, if $t \geq 1$ is odd, then there is a correspondence between self-conjugate $t$-cores and $t$-tuples $(a_0, a_1\ldots,0 \ldots ,-a_1,-a_0)$. By Lemma \ref{oldy}, $\bar{t}$-cores are similarly labeled by $(\frac{t-1}{2})$-tuples of integers $(b'_1,\cdots,b'_{\frac{t-1}{2}})$. Now let $\zeta$ be the map that sends $a_i$ to $b'_{i+1}$ for $0\leq i\leq \frac{t-3}{2}.$ 
\end{proof}

\begin{example}\label{GarveyMarcus} Let $t=3$. Then the self-conjugate $3$-core partition labeled by $(a_0)=(2,0,-2)$ is in bijection with the $\bar{3}$-core partition labeled by $(b_1)=(2)$. In particular the self-conjugate $3$-core $\lambda^*=(4,2,1,1)$ such that $\Delta(\lambda^*)=\{7,1\}$ is mapped to the $\bar{3}$-core $\bar{\lambda}=(4,1)$.
\end{example}
We note that the map in our proof of Theorem \ref{Yangg} does not work when $t$ is even, as the tuples that label each self-conjugate $t$-core partition are associated with an infinite family of $\bar{t}$-core partitions. 

\subsection{A bijection between $(s,t)^*$-core and $(\bar{s},\bar{t})$-cores when $g=1$}
In this section we show that, when $s,t>1$ are odd and $g=1$, there is a bijection between self-conjugate $(s,t)$-core partitions and $(\bar{s},\bar{t})$-core partitions via their corresponding lattice paths. First we detail the lattice constructions mentioned in Section $1.4$.

Ford, Mai, and Sze, in (Section 4, \cite{fms}), construct the diagonal hooks diagram, a $\left\lfloor\frac{s}{2}\right\rfloor \times \left\lfloor\frac{t}{2}\right\rfloor$ lattice of odd numbers, which we denote by $\mathcal{DH}_{s,t}$, as follows. First place $st-s-t$ in the upper left-hand corner position, labelled (1,1), in matrix notation. Then, applying Lemma \ref{fordzag} (1), we can deduct $2t$ for every move downwards, and $2s$ for every move rightward. The position $(i,j)$ will be filled with $st-s(2j-1)-t(2i-1)$ for $1\leq i\leq \left\lfloor\frac{s}{2}\right\rfloor$ and $1\leq j\leq \left\lfloor\frac{t}{2}\right\rfloor$. Some values in the diagram of diagonals will be negative. We separate the positive values in the upper left quadrant (the \emph{positive} side) from negative values in the bottom right portion (the \emph{negative} side) of the $\left\lfloor\frac{s}{2}\right\rfloor \times \left\lfloor\frac{t}{2}\right\rfloor$ lattice by a monotonic path which starts at the bottom left-most corner of the diagonal hooks diagram and ends at its right-most top corner. We will call this {\it the $\pm$border}. Then we let $|\{\mathcal{DH}_{s,t}\}|$ be the set of absolute values of all entries in the diagonal hooks diagram, and we have the following.
\begin{lemma}
Let $s>1$ and $t>1$ be such that $g=\gcd(s,t)=1$. Then the diagonal hook lengths of any self-conjugate $(s,t)$-core partition is a subset of $|\{\mathcal{DH}_{s,t}\}|$.  
\end{lemma}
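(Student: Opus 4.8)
The plan is to show that the diagonal hook lengths of a self-conjugate $(s,t)$-core partition $\la$ form a subset of $|\{\mathcal{DH}_{s,t}\}|$, by combining the structure of the $t$-tuple labeling of $\la$ as a $t$-core with the fact that $\la$ is simultaneously an $s$-core. First I would invoke Corollary \ref{bij}: since $\la$ is a self-conjugate $t$-core and $t$ is odd, it is labeled by a $t$-tuple $(a_0,\ldots,a_{\frac{t-3}{2}},0,-a_{\frac{t-3}{2}},\ldots,-a_0)$. By Lemma \ref{fordzag}(1), for each $\gamma$ with $a_\gamma>0$, the diagonal hook lengths $2(\gamma+\ell t)+1$ with $0\le \ell<a_\gamma$ all lie in $\Delta(\la)$; and more precisely (again by Lemma \ref{fordzag}(1) applied to the $t$-core $\la$, which also encodes the case $a_\gamma<0$ via the conjugate, using Lemma \ref{JK}), every element of $\Delta(\la)$ arises this way, so every $h\in\Delta(\la)$ has the form $h=2(\gamma+\ell t)+1$ or $h=2((t-\gamma)+\ell t)+1$ for suitable indices. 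Equivalently, $\Delta(\la)$ consists of positive odd integers $h$ with $h\not\equiv -1 \pmod{2t}$ — but the key point for us is the $s$-core condition.

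Next I would use that $\la$ is also an $s$-core. By the same reasoning with $s$ in place of $t$, every diagonal hook length $h\in\Delta(\la)$ satisfies $h\not\equiv 0\pmod s$ (more precisely $h$, as an odd positive integer, is not a bar/hook length forbidden by the $s$-core condition on the diagonal; concretely $h\not\equiv -1\pmod{2s}$ in the analogous encoding — I will make this precise using Lemma \ref{fordzag}(1) applied with modulus $s$). So each $h\in\Delta(\la)$ is a positive odd integer that is admissible for both the $t$-core and $s$-core diagonal encodings. The plan is then to match this arithmetic description against the entries of $\mathcal{DH}_{s,t}$: the entry at position $(i,j)$ is $st-s(2j-1)-t(2i-1)$, and as $(i,j)$ ranges over $1\le i\le \lfloor s/2\rfloor$, $1\le j\le\lfloor t/2\rfloor$, these values (taken in absolute value) are exactly the odd integers in a suitable range that are $\equiv st-s-t \pmod{2s}$ and $\equiv st-s-t\pmod{2t}$ — i.e., the complete set of odd integers simultaneously realizable as a diagonal hook of an $s$-core and a $t$-core. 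Since $\gcd(s,t)=1$ the CRT guarantees each admissible residue class mod $2\,\mathrm{lcm}(s,t)=2st$ is hit exactly once by a lattice position, and one checks the range of $|st-s(2j-1)-t(2i-1)|$ over the lattice covers all the needed magnitudes (this is where the Olsson–Stanton / Ford–Mai–Sze bound on the size of the largest such core implicitly controls things). Therefore every $h\in\Delta(\la)$ equals $|st-s(2j-1)-t(2i-1)|$ for some valid $(i,j)$, i.e. $\Delta(\la)\subseteq|\{\mathcal{DH}_{s,t}\}|$.

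I expect the main obstacle to be the bookkeeping that translates the $t$-tuple encoding of $\Delta(\la)$ (Lemma \ref{fordzag}) into a clean congruence-and-range statement, and then showing the lattice $\mathcal{DH}_{s,t}$ realizes precisely that set. The congruence part is a routine CRT argument given $\gcd(s,t)=1$; the delicate point is the \emph{range} — ensuring that no diagonal hook of a self-conjugate $(s,t)$-core can be so large that it escapes the box $1\le i\le\lfloor s/2\rfloor$, $1\le j\le\lfloor t/2\rfloor$. For this I would appeal to the fact (implicit in Ford–Mai–Sze, or provable directly) that the largest diagonal hook of a self-conjugate $(s,t)$-core is $st-s-t$, which is exactly the top-left entry of $\mathcal{DH}_{s,t}$, together with the observation that for odd $h$ with the right residues, $h\le st-s-t$ forces a representation inside the box. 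Modulo that bound, the proof reduces to a direct comparison of two explicitly described sets of odd integers, which I would present as a short paragraph rather than a long computation.
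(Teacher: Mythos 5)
The paper does not actually prove this lemma; it is stated as part of recalling the Ford--Mai--Sze construction, with the content residing in Section 4 of \cite{fms}, so your argument has to stand on its own. As written it has a genuine gap: you reduce ``$h$ is a possible diagonal hook length of a self-conjugate $(s,t)$-core'' to a congruence-and-range condition on the single integer $h$, and that reduction fails. Your description of the lattice entries is not right --- the values $st-s(2j-1)-t(2i-1)$ do \emph{not} all lie in one residue class modulo $2s$ and one modulo $2t$ (only entries in a fixed row, respectively column, share a class), and a count already rules out the CRT claim: there are $(s-1)(t-1)$ odd residues modulo $2st$ divisible by neither $s$ nor $t$, but only $(s-1)(t-1)/4$ lattice positions. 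More seriously, even the corrected pointwise conditions ($h$ odd, $s\nmid h$, $t\nmid h$, $h\le st-s-t$) do not characterize $|\{\mathcal{DH}_{s,t}\}|$: for $(s,t)=(7,11)$ the value $h=25$ satisfies all of them, yet $25$ is not an entry of $\mathcal{DH}_{7,11}$, and indeed cannot be a diagonal hook because $25-2\cdot 7=11$. (A smaller slip: the class excluded by the $t$-core condition is $h\equiv t\pmod{2t}$, i.e.\ odd multiples of $t$, not $h\equiv -1\pmod{2t}$.)

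The ingredient your outline drops is the downward-closure forced by Lemma \ref{fordzag}(1): if $h=2(\gamma+\ell t)+1\in\Delta(\lambda)$ with $\ell\ge 1$, then $h-2t=2(\gamma+(\ell-1)t)+1$ also lies in $\Delta(\lambda)$, and likewise with $s$ in place of $t$. Consequently every positive integer of the form $h-2\alpha s-2\beta t$ with $\alpha,\beta\ge 0$ must itself be a diagonal hook of $\lambda$, hence divisible by neither $s$ nor $t$. It is this closure condition on the whole set $\{h-2\alpha s-2\beta t>0\}$, not a congruence on $h$ alone, that must be matched against the set of absolute values $|st-s(2j-1)-t(2i-1)|$; carrying that out is a numerical-semigroup argument using $\gcd(s,t)=1$, and it \emph{delivers} the bound $h\le st-s-t$ rather than requiring you to assume it. This is precisely the content of Lemma 7 and the surrounding discussion in \cite{fms}. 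If you restore the closure condition, the remainder of your plan can be carried through.
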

We can now use $\mathcal{DH}_{s,t}$ and the $\pm$border to describe the following bijection, which is Lemma 7 in \cite{fms}.
\begin{lemma}\label{11} Let $s,t>1$ be such that $g=1$. The self-conjugate $(s,t)$-core partitions are in correspondence with monotonic paths in $\mathcal{DH}_{s,t}$. In particular, if $\pi$ is such a monotonic path and $\{d_1, \, \ldots , \, d_k\}$ is the set of values trapped between $\pi$ and the $\pm$border, then $\Delta(\lambda_{\pi})=\{|d_1|,\cdots, |d_k|\}$ for the corresponding self-conjugate $(s,t)$-core partition 
$\lambda_{\pi}$.
\end{lemma}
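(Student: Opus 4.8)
The plan is to construct two mutually inverse maps — one sending a self-conjugate $(s,t)$-core $\la$ to a monotonic path in $\mathcal{DH}_{s,t}$, the other recovering $\la$ from the path — by translating the descriptions of self-conjugate $t$- and $s$-cores already available (Corollary~\ref{bij} and Lemma~\ref{fordzag}) into statements about lattice paths. First I would record the structure of the grid. Write $v(i,j)=st-s(2j-1)-t(2i-1)$ for the entry in position $(i,j)$ of the $\lfloor s/2\rfloor\times\lfloor t/2\rfloor$ array, and order positions by $(i,j)\preceq(i',j')$ if $i\le i'$ and $j\le j'$. Since a downward step decreases $v$ by $2t$ and a rightward step by $2s$, the map $v$ is strictly order-reversing; an elementary congruence argument using $\gcd(s,t)=1$ with $s,t$ odd shows that $v$ never vanishes and that no two entries have equal absolute value, so $|v|$ is injective on the grid. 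Hence $P:=\{v>0\}$ is a down-set, $N:=\{v<0\}$ is an up-set, $P$ and $N$ partition the grid, and the $\pm$border is precisely the monotonic path from the bottom-left corner to the top-right corner separating $P$ from $N$. Two more features of $\mathcal{DH}_{s,t}$ will be used: $v(i,j)$ modulo $2t$ depends only on the column $j$ and modulo $2s$ only on the row $i$, and (again because $\gcd(s,t)=1$) no two column-residues sum to $0$ modulo $2t$ and no two row-residues sum to $0$ modulo $2s$.

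For the forward map, let $\la$ be a self-conjugate $(s,t)$-core. By the preceding lemma $\Delta(\la)\subseteq|\{\mathcal{DH}_{s,t}\}|$, so $\Delta(\la)$ marks a set of positions $C_\la=P_\la\sqcup N_\la$, with $P_\la=C_\la\cap P$ and $N_\la=C_\la\cap N$. Applying Lemma~\ref{fordzag}(1) to $\la$ viewed both as a self-conjugate $t$-core and as a self-conjugate $s$-core shows $\Delta(\la)$ is closed under subtracting $2t$ and under subtracting $2s$ whenever the result stays positive; through $v$, this says exactly that $P_\la$ is an up-set inside $P$ and, dually, that $N_\la$ is a down-set inside $N$. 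Applying Lemma~\ref{fordzag}(2) for $t$ and for $s$ shows that no column and no row of the grid carries cells of $C_\la$ of both signs (this is where the residue observation makes the hypothesis bite). I would then deduce that $R_\la:=(P\setminus P_\la)\cup N_\la$ is a down-set of the entire grid: a failure of this would produce, along a monotone segment joining a cell of $P_\la$ to a cell of $N_\la$, an adjacent pair made of one positive and one negative cell of $C_\la$ in a common column or row, which has just been excluded. Consequently $R_\la$ is the region cut off by a unique monotonic path $\pi_\la$, and the entries trapped between $\pi_\la$ and the $\pm$border are exactly those of $C_\la$, whose absolute values form $\Delta(\la)$, as required.

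For the reverse map, given a monotonic path $\pi$ cutting off the region $R_\pi$, set $C_\pi=R_\pi\triangle P$ and $D_\pi=\{|v(i,j)|:(i,j)\in C_\pi\}$. Reversing the previous paragraph: monotonicity of $\pi$ forces $D_\pi$ to have the run structure of Lemma~\ref{fordzag}(1) for both $t$ and $s$, while the fact that a monotonic path traps cells of only one sign in any given column or row — together with the residue facts — forces the non-collision property of Lemma~\ref{fordzag}(2) for both $t$ and $s$. Hence $D_\pi$ satisfies both conditions of Lemma~\ref{fordzag} for $t$ and for $s$, so by Corollary~\ref{bij} and Lemma~\ref{fordzag} there is a self-conjugate $t$-core and a self-conjugate $s$-core whose set of diagonal hook lengths is $D_\pi$; since a self-conjugate partition is determined by its diagonal hook lengths, these coincide in a single self-conjugate $(s,t)$-core $\la_\pi$ with $\Delta(\la_\pi)=D_\pi$. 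As $\la\mapsto C_\la$ and $\pi\mapsto C_\pi$ are inverse to each other and $\Delta$ determines a self-conjugate partition, the two constructions are mutual inverses, and the stated description of $\Delta(\la_\pi)$ is built in.

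I expect the main obstacle to be the claim that $R_\la=(P\setminus P_\la)\cup N_\la$ is a global down-set: $C_\la$ itself is a symmetric difference rather than an order ideal, so one must show that gluing the positive part $P\setminus P_\la$ to the negative part $N_\la$ across the $\pm$border introduces no re-entrant corner. This is the one step where self-conjugacy is used beyond the two separate core conditions — through the non-collision clause of Lemma~\ref{fordzag}(2) — and carrying it out amounts to a short local case analysis at each lattice point of the boundary between the two pieces.
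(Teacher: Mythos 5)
Your argument is correct and takes essentially the approach the paper intends: the paper does not reprove this lemma (it cites Lemma 7 of Ford--Mai--Sze), but the gloss it gives immediately after the statement --- that the path description encodes conditions (1) and (2) of Lemma~\ref{fordzag} applied to both $s$ and $t$, via the run-closure down columns and rows and the one-sign-per-row-and-column exclusion --- is exactly the translation you carry out in detail. Your identification of the crux, namely that $(P\setminus P_{\lambda})\cup N_{\lambda}$ is a global down-set, and its resolution by comparing a positive marked cell and a negative marked cell through the intermediate lattice point sharing a row with one and a column with the other, is the right completion of that sketch.
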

In other words, firstly, for a given self-conjugate $(s,t)$-core partition $\lambda$, a value on the positive side of row or column of the diagonal hooks diagram can occur as a diagonal hook length of $\lambda$ if and only if the absolute values in the same row or column but on the negative side {\it do not}. Secondly, if $|m|$ is a diagonal hook length of $\lambda$, and $m$ appears in a column or row of the positive (negative) side of the $\pm$border, then the absolute value of all entries in the same column or row as $m$ leading up to the $\pm$border also appear. [This corresponds to condition (2) in Lemma \ref{fordzag} when applied to both $s$ and $t$.] 
\begin{example}\label{611star} Let $s=7$ and $t=11$. Then the $3\times5$ lattice below is $\mathcal{DH}_{7,11}$. The $\pm$border is indicated by the dashed path. The self-conjugate $(7,11)$-core $\lambda^*$ corresponding to the lattice path $\pi$ (indicated by the solid black line) has diagonal hook set $$\Delta(\lambda^*)=\{5,3,1\}.$$ This is the partition $\lambda^*=(3^3)$.
\end{example}
\begin{center}
\scalebox{0.25}{\begin{tikzpicture}
\draw (0cm,4cm) node [rectangle, minimum size=2cm, inner sep=0pt, draw, anchor=south west] {\Huge$59$};
\draw (2cm,4cm) node [rectangle, minimum size=2cm, inner sep=0pt, draw, anchor=south west] {\Huge$45$};
\draw (4cm,4cm) node [rectangle, minimum size=2cm, inner sep=0pt, draw, anchor=south west] {\Huge$31$};
\draw (6cm,4cm) node [rectangle, minimum size=2cm, inner sep=0pt, draw, anchor=south west] {\Huge$17$};
\draw (8cm,4cm) node [rectangle, minimum size=2cm, inner sep=0pt, draw, anchor=south west] {\Huge$3$};

\draw (0cm,2cm) node [rectangle, minimum size=2cm, inner sep=0pt, draw, anchor=south west] {\Huge$37$};
\draw (2cm,2cm) node [rectangle, minimum size=2cm, inner sep=0pt, draw, anchor=south west] {\Huge$23$};
\draw (4cm,2cm) node [rectangle, minimum size=2cm, inner sep=0pt, draw, anchor=south west] {\Huge$9$};
\draw (6cm,2cm) node [rectangle, minimum size=2cm, inner sep=0pt, draw, anchor=south west] {\Huge$-5$};
\draw (8cm,2cm) node [rectangle, minimum size=2cm, inner sep=0pt, draw, anchor=south west] {\Huge$-19$};

\draw (0cm,0cm) node [rectangle, minimum size=2cm, inner sep=0pt, draw, anchor=south west] {\Huge$15$};
\draw (2cm,0cm) node [rectangle, minimum size=2cm, inner sep=0pt, draw, anchor=south west] {\Huge$1$};
\draw (4cm,0cm) node [rectangle, minimum size=2cm, inner sep=0pt, draw, anchor=south west] {\Huge$-13$};
\draw (6cm,0cm) node [rectangle, minimum size=2cm, inner sep=0pt, draw, anchor=south west] {\Huge$-27$};
\draw (8cm,0cm) node [rectangle, minimum size=2cm, inner sep=0pt, draw, anchor=south west] {\Huge$-41$};

\draw[line width=8pt](0,0)--(2,0)--(2,2)--(8,2)--(8,6)--(10,6);

\draw[line width=10pt, dashed](0,0)--(4,0)--(4,2)--(6,2)--(6,4)--(10,4)--(10,6);

\filldraw[black] (0,0) circle (0.25cm);
\filldraw[black] (10,6) circle (0.25cm);

\end{tikzpicture}}
\end{center}

Let $s,t>1$ be odd and $g=1$. Bessenrodt and Olsson \cite{BO} consider all possible values for parts of a simultaneous $(\bar{s},\bar{t})$-core partition. In an analogous argument to Anderson's, such values will be of the form $st-s-t-(\alpha s-\beta t)$ for $\alpha\geq 0$ and $\beta\geq 0$. We can visualize this in a way similar to Anderson, with a partially ordered set beginning with $st-s-t$ in the upper rightmost corner. Each move down reduces the value by $t$, each move to the right reduces the value by $s$. [We note that our poset orientation here is downward as opposed to Bessenrodt-Olsson's upward orientation; the reason for this will be made clear in the proof of Theorem \ref{DiagYang}.]

However, not all part-values contained in the poset are acceptable parts for a $(\bar{s},\bar{t})$-core partition. Consider $1\leq j\leq t$. By Olsson's characterization of $\bar{t}$-core partitions (Equation 5(ii), Chapter 4, \cite{O3}) either parts of residue $j$ modulo $t$ or $t-j$ modulo $t$ can appear, but not both. However, $st-s-t-(\frac{t-3}{2} s-\frac{s-3}{2} t)=\frac{t+s}{2}$, and $\frac{t+s}{2}-s=\frac{t-s}{2}$, and the inclusion of both values would violate this condition modulo $s$. Hence a rectangular section of $\frac{s-1}{2}\times\frac{t-1}{2}$ values in the poset, those {\it above} (in the sense of the partial order) and {\it including} $\frac{t+2}{2}$ must be eliminated. What remains are two sets of numbers: the {\it Yin} portion of the diagram, below the eliminated rectangular section, and the {\it Yang} portion of the diagram, to the right of the rectangular section. Then Bessenrodt and Olsson (Remark 4.1, \cite{BO}) show that by rotating the Yang portion of the diagram $180^{\circ}$, one can attach the Yang portion to the Yin portion to produce a $\frac{s-1}{2}\times\frac{t-1}{2}$ lattice with value $t(\frac{s-1}{2})-s$ in the top leftmost corner, $(t-s)$ in the bottom leftmost corner, $s(\frac{t-1}{2})-t$ in the bottom rightmost corner, and $(\frac{t-s}{2})$ in the top rightmost corner.

The following result appears as \cite[Remark 3.1]{BO}.

\begin{lemma} Let $s,t>1$ be odd and $g=1$. Then the Yin-Yang diagram contains all possible values of parts for an $(\bar{s},\bar{t})$-core partition.
\end{lemma}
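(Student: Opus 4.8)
The plan is to combine the bar-analogue of Anderson's parametrisation of possible parts with a direct residue computation, and then to observe that the final regluing is merely a relabelling of cells. First I would invoke the bar-version of Anderson's theorem of Bessenrodt and Olsson \cite{BO}: when $s,t>1$ are odd and coprime, every part of an $(\bar s,\bar t)$-core partition has the form $st-s-t-(\alpha s-\beta t)$ with $\alpha,\beta\ge 0$, and, there being only finitely many such cores, the set $\mathcal P$ of integers of this shape that actually occur as a part of some $(\bar s,\bar t)$-core is a finite subset of the entry set of the (finite) poset constructed just before the statement. The Yin--Yang diagram is obtained from that poset by excising the $\frac{s-1}{2}\times\frac{t-1}{2}$ block consisting of the cells lying above (and including) the cell of value $\tfrac{s+t}{2}$, and then reassembling the Yin region and the Yang region. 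Hence it suffices to prove: (i) no value lying in the excised block occurs as a part of any $(\bar s,\bar t)$-core; and (ii) the $180^{\circ}$ rotation used to attach the Yang region to the Yin region is injective on cells and never identifies two stored values, so that the entry set of the Yin--Yang diagram is exactly the entry set of the poset with the excised block removed. Granting (i) and (ii), $\mathcal P$ lies in the poset's entry set and avoids the excised block, hence lies in the entry set of the Yin--Yang diagram, which is the assertion.

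Step (ii) is purely formal: a $180^{\circ}$ rotation is a bijection of the underlying array of cells and leaves the integer stored in each cell unchanged, altering only its location; one checks that it carries the Yang cells bijectively onto the cells of the new $\frac{s-1}{2}\times\frac{t-1}{2}$ array not occupied by the unrotated Yin region, so no two values are merged. The substance is Step (i), which generalises the single computation made before the statement for the corner value $\tfrac{s+t}{2}$ to every cell of the block. For that corner cell (say $s<t$; the case $s>t$ is symmetric with the roles of $s$ and $t$ interchanged) one uses that $\tfrac{s+t}{2}=t-\tfrac{t-s}{2}$ lies in the conjugate residue class to $\tfrac{t-s}{2}$ modulo $t$, so that, by Olsson's characterisation of $\bar t$-cores (Lemma \ref{oldy}, from \cite{O3}), a core containing it contains no part $\equiv\tfrac{t-s}{2}\pmod t$; but $\tfrac{s+t}{2}-s=\tfrac{t-s}{2}$ lies one step below $\tfrac{s+t}{2}$ in its residue class modulo $s$, so the initial-segment condition for $\bar s$-cores (Lemma \ref{oldy} with $t$ replaced by $s$) forces $\tfrac{t-s}{2}$ to be a part as well --- a contradiction. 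I would run the same argument for an arbitrary cell $v$ of the block: reducing $v$ modulo $t$ and modulo $s$ and using the ``one class from each conjugate pair, filled by an initial segment'' conditions on both moduli, one shows that $v$ being a part forces into the partition a value that the conditions forbid, so $v\notin\mathcal P$.

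I expect the genuine obstacle to be the boundary of the excised block in Step (i): for a cell in the top row or the leftmost column, one of the one-step neighbours $v-s$, $v+t$ falls outside the poset, so the clean ``modulo $s$ forces what modulo $t$ forbids'' dichotomy must be replaced by a finer account of precisely which residue classes, modulo $s$ and modulo $t$ simultaneously, can be active for an $(\bar s,\bar t)$-core; moreover the cases $s<t$ and $s>t$ must be treated separately, since they interchange the roles of the Yin and Yang regions and of the two moduli. Once this boundary analysis is carried out, the interior cells follow from the generic argument and the lemma is proved; alternatively, one may note that since Bessenrodt and Olsson's bijection identifies $(\bar s,\bar t)$-cores with monotonic paths in the Yin--Yang diagram and the parts of a core are read off along such a path, containment of all possible parts in the diagram is a by-product of that bijection.
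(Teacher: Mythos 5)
The paper does not actually prove this lemma: it is quoted as \cite[Remark 3.1]{BO}, and the only argument the paper offers is the informal discussion in the preceding paragraph, which (i) records that every part has the form $st-s-t-(\alpha s-\beta t)$, (ii) carries out the residue computation for the single corner value $\tfrac{s+t}{2}$, and (iii) describes the $180^{\circ}$ regluing. Your proposal follows exactly this outline, so in spirit you are reconstructing the Bessenrodt--Olsson argument rather than taking a different route; your fallback remark (``cite the bijection of Theorem 3.2 of \cite{BO}'') is essentially what the paper itself does.

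The one substantive gap is that your Step (i) is only proved for the corner cell; for a general cell $v$ of the excised block you write ``one shows that\ldots'' and defer the work. That step does go through, and the obstacle is not where you locate it. Write $v=st-s-t-\alpha s-\beta t$ with $0\le\alpha\le\tfrac{t-3}{2}$, $0\le\beta\le\tfrac{s-3}{2}$ (take $t>s$; the other case swaps the roles of $s$ and $t$). Repeatedly subtracting $s$, the initial-segment condition modulo $s$ forces all of $v, v-s,\dots$ to be parts as long as they stay positive; after $\tfrac{t-3}{2}-\alpha$ steps one reaches $w=s\tfrac{t+1}{2}-t(1+\beta)\ge\tfrac{s+t}{2}>0$, and one further step gives $w-s\ge\tfrac{t-s}{2}>0$. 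A direct computation shows $w\equiv-\tfrac{t-s}{2}$ and $w-s\equiv\tfrac{t-s}{2}\pmod t$, so these two forced parts lie in conjugate residue classes modulo $t$, contradicting Lemma \ref{oldy}. In particular the descent never leaves the positive part of the poset before the contradiction appears, so the ``boundary of the excised block'' that you flag as the genuine obstacle requires no separate treatment; what your write-up is missing is simply this uniform descent argument. Your Step (ii) (the rotation is a value-preserving bijection of cells) is correct as stated.
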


In particular, values from a column or row of the Yin diagram can be included as parts of an $(\bar{s},\bar{t})$-core partition if and only if no values from the same column or row but in the Yang diagram are included. Let the ${\mathcal Y}$-border be the path separating the Yin part from the Yang part. The following is a result of Bessenrodt and Olsson (Theorem 3.2, \cite{BO}).

\begin{lemma} \label{22} Let $s,t>1$ be such that $g=1$. Then $(\bar{s},\bar{t})$-core partitions are labeled by monotonic paths $\pi$ in the Yin-Yang diagram. In particular, $(\lambda_1,\cdots,\lambda_k)$ are the parts of $\lambda$ where $\lambda_i$ are the values trapped between $\pi$ and the ${\mathcal Y}$-border.
\end{lemma}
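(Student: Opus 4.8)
The plan is to prove Lemma \ref{22} by applying Olsson's residue-tuple description of bar-core partitions (Lemma \ref{oldy}) simultaneously for the modulus $t$ and for the modulus $s$, and then reconciling the two resulting constraints via the Chinese Remainder Theorem, which is available since $g=\gcd(s,t)=1$. Write $P$ for the set of (distinct, positive) parts of a bar-partition $\lambda$. I would show that $\lambda$ is an $(\bar s,\bar t)$-core if and only if $P$ is exactly the set of values trapped between some monotonic path $\pi$ and the $\mathcal{Y}$-border of the Yin-Yang diagram, and that $\pi\mapsto\lambda_\pi$ is then the asserted bijection.

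The forward direction breaks into three steps. First, bound the parts: by Lemma \ref{oldy} the parts of an $\bar s$-core (resp.\ $\bar t$-core) lying in a fixed nonzero residue class mod $s$ (resp.\ mod $t$) form an initial segment $r,r+s,r+2s,\dots$ (resp.\ $r,r+t,\dots$), and the core condition allows at most one class of each complementary pair $\{j,s-j\}$ (resp.\ $\{i,t-i\}$) to be nonempty, so the parts meet at most $\frac{s-1}{2}$ nonzero classes mod $s$ and at most $\frac{t-1}{2}$ nonzero classes mod $t$. If a part were $\geq st-s-t+1=(s-1)(t-1)$, then (assuming $s\leq t$ without loss of generality, and using $\gcd(s,t)=1$, so that consecutive terms of an initial segment mod $t$ fall in distinct classes mod $s$) that one class mod $t$ alone would supply at least $s-1$ parts, lying in $s-1$ distinct classes mod $s$, more than the $\frac{s-1}{2}$ permitted --- a contradiction. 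Hence every part lies in $\{1,\dots,st-s-t\}$ and is divisible by neither $s$ nor $t$, and under $n\mapsto(n\bmod s,\,n\bmod t)$ these are precisely the entries of the candidate poset constructed before the lemma. Second, identify the deleted block: the short residue computation sketched before the lemma (comparing $\frac{t+s}{2}$ with $\frac{t+s}{2}-s=\frac{t-s}{2}$) shows the $\frac{s-1}{2}\times\frac{t-1}{2}$ rectangle removed there consists exactly of those candidate values whose inclusion among the parts would, through the staircase conditions, force both halves of some complementary pair to become active; so no part of an $(\bar s,\bar t)$-core ever lies in that block, and the surviving candidates split into the Yin and Yang regions. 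Third, translate: within either region a fixed residue class mod $s$ or mod $t$ occupies a single row or column, and the two halves of Olsson's condition --- ``parts in a class form an initial segment'' and ``only one class of each complementary pair is active'' --- say precisely that the entries of $P$ in each such row/column form a down-set reaching the $\mathcal{Y}$-border and that companion rows/columns (one in Yin, one in Yang) are used complementarily. Together, these amount to saying that $P$ is cut out by a single monotonic path.

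For the reverse direction I would run this backwards: given a monotonic path $\pi$, take $P$ to be the trapped values; distinctness of parts is automatic, positivity holds because trapped entries sit strictly on the positive side of the relevant border, and by construction $P$ satisfies the initial-segment requirement in every residue class mod $s$ and mod $t$, so two applications of Lemma \ref{oldy} give that $\lambda_\pi$ is both an $\bar s$-core and a $\bar t$-core. The step I expect to be the real obstacle is the geometry of the gluing: one must check that rotating the Yang region by $180^{\circ}$ and attaching it to the Yin region yields an honest $\frac{s-1}{2}\times\frac{t-1}{2}$ array with corner entries $t\left(\frac{s-1}{2}\right)-s$, $t-s$, $s\left(\frac{t-1}{2}\right)-t$, and $\frac{t-s}{2}$, and --- crucially --- that under this gluing each row of the Yin part is joined to exactly the row (or column) of the rotated Yang part carrying the complementary residue class, so that the two staircase conditions collapse into the single condition ``$\pi$ is monotonic''. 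This is ultimately a bookkeeping lemma about how residues mod $s$ and mod $t$ are distributed over the two regions and the deleted rectangle, but it is where all the arithmetic relating $s$, $t$, and that rectangle gets reconciled; everything else is a direct translation through Lemma \ref{oldy}.
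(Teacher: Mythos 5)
First, a point of reference: the paper does not prove Lemma \ref{22} at all --- it is quoted as Theorem 3.2 of Bessenrodt and Olsson \cite{BO}, so there is no in-paper argument to compare yours against. What you have written is an attempt to reconstruct the Bessenrodt--Olsson proof from Olsson's part-wise characterization of $\bar{t}$-cores, and while the ingredients you assemble (the initial-segment and complementary-pair conditions applied for both moduli, reconciled via $\gcd(s,t)=1$) are the right ones, the proposal has two genuine gaps.

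The first is a concrete error in your opening step. Knowing that every part is at most $st-s-t$ and divisible by neither $s$ nor $t$ does \emph{not} place the parts inside the candidate poset: that poset consists of the integers \emph{not} representable as $\alpha s+\beta t$ with $\alpha,\beta\geq 0$ (the $(s-1)(t-1)/2$ gaps of the numerical semigroup generated by $s$ and $t$), and there are many integers below $st-s-t$ prime to both moduli that are representable --- $s+t$, for instance; for $(s,t)=(7,11)$ there are $46$ integers in $[1,59]$ divisible by neither modulus but only $30$ poset entries. Ruling these out needs a separate descent: if $\alpha s+\beta t$ with $\beta\geq 1$ were a part, the initial-segment condition mod $t$ forces $\alpha s+(\beta-1)t$ to be a part, and iterating lands on a positive multiple of $s$ or of $t$, a contradiction. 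The second, larger gap is the one you flag yourself and defer as ``bookkeeping'': the claim that, after deleting the rectangle and performing the $180^{\circ}$ gluing, the two families of Olsson conditions collapse into the single condition that $\pi$ be monotonic. This is the entire content of the lemma, and it is not routine, because your working assumption --- that each row or column of the Yin--Yang array carries a complementary pair of residue classes, one half in Yin and one in Yang --- is not literally what happens. Already for $(s,t)=(7,11)$, the Yin column in class $4\bmod 11$ (entries $4,15,26$) has complementary class $7\bmod 11$ containing no semigroup gaps at all, while the class complementary to the Yang column in class $2\bmod 11$, namely $9\bmod 11$, has all of its gaps ($9,20,31$) inside the deleted rectangle. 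So the pairing of Yin rows/columns with Yang rows/columns under the rotation, and the fact that exclusivity and the initial-segment conditions jointly reduce to monotonicity of a single path, must be established by an explicit residue computation rather than assumed. As it stands the proposal is a plausible plan whose central verification is missing.
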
 
[We note that recent results by C. Deng \cite{De} describe precisely which monotonic paths correspond to the so-called ``even" $(\bar{s},\bar{t})$-core partitions.]

\begin{example}\label{1627bar} Let $s=7$ and $t=11$. Then the Yin-Yang diagram is below, and the ${\mathcal Y}$-border between the two portions is indicated by a dashed line. The $(\bar{7},\overline{11})$-core partition $\bar{\lambda}$ that corresponds to the path below is $\bar{\lambda}=(6).$ [Here the elements of the Yang portion of the diagram are labeled with a negative sign (for clarity), so the absolute value of those values will need to taken.] 
\end{example}

\smallskip

\begin{center}
\scalebox{0.25}{\begin{tikzpicture}
 size=2cm, inner sep=0pt, draw, anchor=south west];

\draw (0cm, 4cm) node [rectangle, minimum size=2cm, inner sep=0pt, draw, anchor=south west] {\Huge$26$};
\draw (2cm, 4cm) node [rectangle, minimum size=2cm, inner sep=0pt, draw, anchor=south west] {\Huge$19$};
\draw (4cm, 4cm) node [rectangle, minimum size=2cm, inner sep=0pt, draw, anchor=south west] {\Huge$12$};
\draw (6cm, 4cm) node [rectangle, minimum size=2cm, inner sep=0pt, draw, anchor=south west] {\Huge$5$};
\draw (8cm, 4cm) node [rectangle, minimum size=2cm, inner sep=0pt, draw, anchor=south west] {\Huge$-2$};
\draw (0cm, 2cm) node [rectangle, minimum size=2cm, inner sep=0pt, draw, anchor=south west] {\Huge$15$};
\draw (2cm, 2cm) node [rectangle, minimum size=2cm, inner sep=0pt, draw, anchor=south west] {\Huge$8$};
\draw (4cm, 2cm) node [rectangle, minimum size=2cm, inner sep=0pt, draw, anchor=south west] {\Huge$1$};
\draw (6cm, 2cm) node [rectangle, minimum size=2cm, inner sep=0pt, draw, anchor=south west] {\Huge$-6$};
\draw (8cm, 2cm) node [rectangle, minimum size=2cm, inner sep=0pt, draw, anchor=south west] {\Huge$-13$};

\draw (0cm, 0cm) node [rectangle, minimum size=2cm, inner sep=0pt, draw, anchor=south west] {\Huge$4$};
\draw (2cm, 0cm) node [rectangle, minimum size=2cm, inner sep=0pt, draw, anchor=south west] {\Huge$-3$};
\draw (4cm, 0cm) node [rectangle, minimum size=2cm, inner sep=0pt, draw, anchor=south west] {\Huge$-10$};
\draw (6cm, 0cm) node [rectangle, minimum size=2cm, inner sep=0pt, draw, anchor=south west] {\Huge$-17$};
\draw (8cm, 0cm) node [rectangle, minimum size=2cm, inner sep=0pt, draw, anchor=south west] {\Huge$-24$};
\draw[line width=8pt](0,0)--(2,0)--(2,2)--(8,2)--(8,6)--(10,6);
\draw[line width=11pt, dashed](0,0)--(2,0)--(2,2)--(6,2)--(6,4)--(8,4)--(8,6)--(10,6);
\filldraw[black] (0,0) circle (0.25cm);
\filldraw[black] (10,6) circle (0.25cm);
\end{tikzpicture}}
\end{center}
We are now in a position to prove the main result of this subsection.

\begin{theorem}\label{DiagYang} Let $s,t>1$ be odd and coprime. Then self-conjugate $(s,t)$-core partitions are in bijection with $(\bar{s},\bar{t})$-core partitions.
\end{theorem}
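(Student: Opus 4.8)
The plan is to compare the two lattice-path models head-on and show they are literally the same lattice, so that the identity map on monotonic paths furnishes the bijection. Concretely, by Lemma \ref{11} the self-conjugate $(s,t)$-core partitions are in bijection with monotonic paths in the diagonal hooks diagram $\mathcal{DH}_{s,t}$, a $\left\lfloor\frac{s}{2}\right\rfloor\times\left\lfloor\frac{t}{2}\right\rfloor=\frac{s-1}{2}\times\frac{t-1}{2}$ array (using that $s,t$ are odd); and by Lemma \ref{22} the $(\bar{s},\bar{t})$-core partitions are in bijection with monotonic paths in the Yin-Yang diagram, also a $\frac{s-1}{2}\times\frac{t-1}{2}$ array. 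So the first step is to record that both diagrams have the same shape, hence their sets of monotonic paths (from bottom-left corner to top-right corner) are in canonical bijection with one another. The bijection I would propose is: given a self-conjugate $(s,t)$-core $\lambda$, take its associated path $\pi$ in $\mathcal{DH}_{s,t}$, transport $\pi$ verbatim to the Yin-Yang diagram, and output the $(\bar{s},\bar{t})$-core partition $\mu=\lambda_\pi$ produced there.

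The substance of the proof is checking that this is well-defined and really a bijection, which reduces to comparing the two arrays entry-by-entry. First I would verify that the $\pm$border of $\mathcal{DH}_{s,t}$ and the $\mathcal{Y}$-border of the Yin-Yang diagram coincide as lattice paths, by matching the four corner values: $\mathcal{DH}_{s,t}$ has $st-s-t$ in position $(1,1)$ with step $-2s$ rightward and $-2t$ downward, while the (reassembled) Yin-Yang diagram, per the construction recalled just before Lemma \ref{22}, has $t\frac{s-1}{2}-s$ top-left, $t-s$ bottom-left, $s\frac{t-1}{2}-t$ bottom-right, and $\frac{t-s}{2}$ top-right. A short computation shows these are exactly (twice, or otherwise linearly related to) the corner values of $\mathcal{DH}_{s,t}$; in fact the entry of $\mathcal{DH}_{s,t}$ in position $(i,j)$ is $st-s(2j-1)-t(2i-1)$, and I would show this equals $2$ times the corresponding Yin-Yang entry, or more precisely that sign patterns (positive side vs. negative side, Yin vs. Yang) agree cell by cell — so the two borders are the same monotonic path. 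Since the border is the same, the family of monotonic paths is the same, and trapped-cell sets correspond.

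Finally I would translate trapped cells into partition data on each side and check the translations are compatible. On the self-conjugate side, Lemma \ref{11} says the trapped values $\{d_i\}$ give $\Delta(\lambda_\pi)=\{|d_i|\}$, i.e. the diagonal hook lengths; via Lemma \ref{fordzag}(1) these unpack into a self-conjugate $(s,t)$-core via $2(\gamma+\ell t)+1$, $2(\delta + \ell s)+1$ type formulas. On the bar side, Lemma \ref{22} says the trapped values are the parts $\lambda_i$ of the $(\bar{s},\bar{t})$-core, which by Lemma \ref{oldy} unpack via $i+\ell t$ (or $(t-i)+\ell t$) formulas. Because the entries of the two diagrams differ by the simple linear relation (diagonal hook length $h=2\,(\text{bar part})-\text{something}$, to be pinned down — essentially $h_{ii}=2\lambda_i - \text{(a correction)}$, reflecting the classical relation between diagonal hooks of a self-conjugate partition and parts of the associated bar partition), the same path produces matching combinatorial data on both sides, and the assignment $\lambda\mapsto\mu$ is a bijection with the obvious inverse (transport the path back). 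The main obstacle I anticipate is exactly this last bookkeeping: getting the precise linear identification between the $\mathcal{DH}_{s,t}$ entries and the Yin-Yang entries right — including the orientation flip (the paper flags that Bessenrodt-Olsson use an upward poset orientation while here it is downward, and that a $180^\circ$ rotation of the Yang part is involved), so care is needed to ensure the ``trapped between $\pi$ and the border'' regions genuinely match rather than matching up to a reflection. Once the diagrams are shown to be identified as labelled arrays, the bijection is immediate and the count $\binom{\frac{s-1}{2}+\frac{t-1}{2}}{\frac{s-1}{2}}$ drops out on both sides as a sanity check.
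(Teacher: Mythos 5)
Your opening move --- both families are labeled by monotonic paths in arrays of the same dimensions $\frac{s-1}{2}\times\frac{t-1}{2}$, so transport the path $\pi$ from the diagonal hooks diagram to the Yin-Yang diagram and read off the partition there --- is exactly the paper's proof of Theorem \ref{DiagYang}, and your first paragraph already contains a complete argument: Lemma \ref{11} is a bijection between self-conjugate $(s,t)$-cores and \emph{all} monotonic paths in $\mathcal{DH}_{s,t}$, Lemma \ref{22} is a bijection between $(\bar{s},\bar{t})$-cores and \emph{all} monotonic paths in the Yin-Yang diagram, and composing these two bijections through the common set of paths is the map $\gamma_{s,t}$. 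Each diagram's border is only used to \emph{decode} which partition a given path represents; it plays no role in whether the correspondence is bijective.

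The problem is with what you call ``the substance of the proof.'' The claims you propose to verify there are false, and that step would fail if you attempted it. The entries of the two arrays are related by $d_{ij}=2y_{ij}+s$, where $d_{ij}=st-s(2j-1)-t(2i-1)$ is the diagonal-hooks entry and $y_{ij}$ the (reassembled) Yin-Yang entry; because of the additive shift by $s$, every Yin-Yang entry in the range $-\frac{s-1}{2}\le y_{ij}\le -1$ is negative (Yang) while the corresponding $\mathcal{DH}_{s,t}$ entry $2y_{ij}+s$ is positive. So the sign patterns do \emph{not} agree cell by cell and the $\pm$border and the $\mathcal{Y}$-border are genuinely different lattice paths. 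You can see this in the paper's own $s=7$, $t=11$ examples: the cell holding $-2$ in the Yin-Yang diagram of Example \ref{1627bar} holds $3$ in $\mathcal{DH}_{7,11}$ of Example \ref{611star}, and the dashed borders drawn in the two figures differ. Correspondingly, the ``trapped'' regions for the same path $\pi$ are different in the two diagrams (in the worked example $\pi$ traps $\{1,-5,3\}$ on one side and $\{-6\}$ on the other), and there is no simple identification of the decoded combinatorial data of the sort you sketch at the end ($h_{ii}=2\lambda_i-\text{correction}$ does not hold cellwise across the two borders). None of this damages the theorem, but it means your proof must \emph{not} route through ``the two borders coincide'': delete that program, keep the composition-of-path-labelings argument from your first paragraph, and you have the paper's proof. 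The binomial count $\binom{\frac{s-1}{2}+\frac{t-1}{2}}{\frac{s-1}{2}}$ as a sanity check is fine.
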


\begin{proof} By Lemma \ref{11}, we know that the self-conjugate $(s,t)$-core partitions are labeled by monotonic paths in an $\left\lfloor\frac{s}{2}\right\rfloor \times \left\lfloor\frac{t}{2}\right\rfloor$ lattice (the diagonal hooks diagram). Consider the self-conjugate $(s,t)$-core $\lambda^*_{\pi}$ determined by the diagonal hook values trapped between a monotonic path $\pi$ and the $\pm$boundary. By Lemma \ref{22} we know that the $(\bar{s},\bar{t})$-core partitions are labeled by monotonic (as a consequence of our orientation) paths in a $\frac{s-1}{2}\times\frac{t-1}{2}$ lattice.
We delete the values of the diagonals hook diagram and replace them with the values of the Yin-Yang diagram, leaving the path $\pi$ unchanged. Then the corresponding $(\bar{s},\bar{t})$-core partition $\bar{\lambda}_{\pi}$ is the one described by the part-values trapped between $\pi$ and the ${\mathcal Y}$-border. The map works the same in the other direction.
\end{proof}
Let $\gamma_{s,t}$ be the bijection described in the proof of Theorem \ref{DiagYang}. 
\smallskip
\begin{example} \label{help} Consider the self-conjugate $(7,11)$-core partition $\lambda^*=(3^3)$ in Example \ref{611star}. Then $\gamma_{7,11}(\lambda^*)$ is the $(\bar{7},\overline{11})$-core partition $\bar{\lambda}=(6)$ in Example \ref{1627bar}.
\end{example}

\subsection{A bijection between $(s,t)^*$-cores and $(\bar{s},\bar{t})$-cores when $g>1$ and odd.}

Let $s,t>1$ be odd and $g>1$ be odd. Using bijections from Section 4.1 and Section 4.2, we construct a correspondence between self-conjugate $(s,t)$-core partitions and $(\bar{s},\bar{t})$-core partitions. In particular, we apply results from Section 2.2 and Section 2.3 and move from the $\bar{g}$-core and $\bar{g}$-quotient of an $(\bar{s},\bar{t})$-core to the $g$-core and $g$-quotient of a self-conjugate $(s,t)$-core.  

\begin{theorem}\label{Yyang} Let $s,t$ be odd and $g>1$. Then self-conjugate $(s,t)$-core partitions are in correspondence with $(\bar{s},\bar{t})$-core partitions.
\end{theorem}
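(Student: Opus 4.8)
The plan is to decompose each side through its $g$-core/$g$-quotient (respectively $\bar g$-core/$\bar g$-quotient) and build the correspondence componentwise, applying the bijection $\zeta$ of Theorem \ref{Yangg} to the core part, the bijection $\gamma_{s',t'}$ of Theorem \ref{DiagYang} to one distinguished quotient component, and the identity map to the remaining components.

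First I would record the structure on the self-conjugate $(s,t)$-core side. Since $s$ and $t$ are odd, $g=\gcd(s,t)$ is odd as well. By Lemma \ref{nec}, a self-conjugate $(s,t)$-core $\la$ is the same data as a self-conjugate $g$-core ${\mathcal Cor}_g(\la)$ together with a $g$-quotient $(\la_0,\ldots,\la_{g-1})$ of $(s',t')$-cores satisfying $\la_i=\la_{g-i-1}^{\vee}$. Because $g$ is odd, this constraint has exactly one fixed index, $i=\frac{g-1}{2}$, so the middle component $\la_{(g-1)/2}$ is forced to be self-conjugate, while the remaining components come in $\frac{g-1}{2}$ conjugate pairs, each pair determined by its first member $\la_0,\ldots,\la_{(g-3)/2}$, which are arbitrary $(s',t')$-cores. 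Hence a self-conjugate $(s,t)$-core amounts to: a self-conjugate $g$-core; a self-conjugate $(s',t')$-core $\la_{(g-1)/2}$; and $\frac{g-1}{2}$ arbitrary $(s',t')$-cores $\la_0,\ldots,\la_{(g-3)/2}$. On the other side, Theorem \ref{stbarcore} says an $(\bar s,\bar t)$-core $\nu$ is the same data as a $\bar g$-core ${\mathcal Cor}_{\bar g}(\nu)$; an $(\bar{s}',\bar{t}')$-core $\nu_0$; and $(s',t')$-cores $\nu_1,\ldots,\nu_{(g-1)/2}$.

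Then I would define the correspondence on these ingredients: send ${\mathcal Cor}_g(\la)$ to $\zeta({\mathcal Cor}_g(\la))$, a $\bar g$-core by Theorem \ref{Yangg}; send the middle component $\la_{(g-1)/2}$ to $\nu_0:=\gamma_{s',t'}(\la_{(g-1)/2})$, an $(\bar{s}',\bar{t}')$-core by Theorem \ref{DiagYang}; and send $\la_{i-1}$ to $\nu_i$ for $1\le i\le \frac{g-1}{2}$ via the identity. By Theorem \ref{stbarcore}, the resulting triple $\big(\zeta({\mathcal Cor}_g(\la)),\,\nu_0,\,(\nu_1,\ldots,\nu_{(g-1)/2})\big)$ defines a genuine $(\bar s,\bar t)$-core $\nu$. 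Conversely, from $\nu$ one recovers a self-conjugate $g$-core $\zeta^{-1}({\mathcal Cor}_{\bar g}(\nu))$, a self-conjugate $(s',t')$-core $\gamma_{s',t'}^{-1}(\nu_0)$ to place in position $\frac{g-1}{2}$, the components $\la_{i-1}=\nu_i$ for $1\le i\le\frac{g-1}{2}$, and then $\la_{g-i-1}=\la_i^{\vee}$ for $0\le i\le\frac{g-3}{2}$; by Lemma \ref{nec} this assembles into a self-conjugate $(s,t)$-core. The two assignments are mutually inverse because each ingredient — $\zeta$, $\gamma_{s',t'}$, the identity, and the core/quotient reconstructions of Lemma \ref{nec} and Theorem \ref{stbarcore} — is a bijection.

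The main obstacle is bookkeeping rather than anything deep: the self-conjugacy constraint of Lemma \ref{nec} places the distinguished self-conjugate component at the \emph{middle} of the $g$-quotient (possible precisely because $g$ is odd), whereas on the bar side the distinguished component $\nu_0$ sits at position $0$ of the $\bar g$-quotient, so one must check carefully that the $\frac{g-1}{2}$ conjugate pairs on the $(s,t)$-side collapse to exactly $\frac{g-1}{2}$ free $(s',t')$-cores, matching $\nu_1,\ldots,\nu_{(g-1)/2}$, and that the lone self-conjugate $(s',t')$-core matches $\nu_0$ through $\gamma_{s',t'}$. A secondary point worth a sentence: in the degenerate cases $s'=1$ or $t'=1$, Theorem \ref{DiagYang} does not literally apply, but then the only $(s',t')$-core and the only $(\bar{s}',\bar{t}')$-core is the empty partition, so both sides reduce to $\zeta$ between self-conjugate $g$-cores and $\bar g$-cores and the statement is immediate.
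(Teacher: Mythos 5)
Your proposal is correct and follows essentially the same route as the paper: decompose via the $g$-core and $g$-quotient using Lemma \ref{nec}, apply $\zeta$ from Theorem \ref{Yangg} to the self-conjugate $g$-core, apply the $g=1$ bijection of Theorem \ref{DiagYang} to the distinguished middle component $\la_{(g-1)/2}$, send the remaining $\frac{g-1}{2}$ free components across by the identity, and reassemble with Theorem \ref{stbarcore}. Your explicit handling of the index bookkeeping and of the degenerate cases $s'=1$ or $t'=1$ is slightly more careful than the paper's (which also writes $\gamma_{s,t}$ where $\gamma_{s',t'}$ is meant), but the argument is the same.
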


\begin{proof}
By Theorem \ref{stcore}, each self-conjugate $(s,t)$-core partition $\lambda$ is labeled by a self-conjugate $g$-core ${\mathcal Cor}_g(\lambda)$ and $g$-quotient ${\mathcal Quo}_g(\lambda)=(\lambda_0,\cdots,\lambda_{g-1})$ where each $\lambda_i$ is an $(s',t')$-core for $0\leq i\leq \frac{g-3}{2}$ and $\lambda^{\vee}_i=\lambda_{g-i-1}.$ In particular, since $g$ is odd, $\lambda_{\frac{g-1}{2}}$ is a $(s',t')^*$-core. Consider the following map $\Gamma_{s,t}$.
\begin{enumerate}
\item $\Gamma_{s,t}$ sends the self-conjugate $g$-core ${\mathcal Cor}_g(\lambda)$ to its corresponding $\bar{g}$-core $\zeta({\mathcal Cor}_g(\lambda))$ via the bijection $\zeta$ of Theorem \ref{Yangg}.
\item  $\Gamma_{s,t}$ sends the $(s',t')$-core $\lambda_i$, for $0\leq i\leq \frac{g-3}{2}$, to itself, renaming it $\overline{\lambda}_{i+1}$.
\item  $\Gamma_{s,t}$ sends the self-conjugate $(s',t')$-core $\lambda_{\frac{g-1}{2}}$ to its corresponding $(\overline{s}',\overline{t}')$-core $\overline{\lambda}_{0}$, via the bijection $\gamma_{s,t}$ of Theorem \ref{DiagYang}.
\end{enumerate}
Then set ${\mathcal Cor}_{\bar{g}}(\overline{\lambda})=\zeta({\mathcal Cor}_g(\lambda))$ and $(\overline{\lambda}_0,\overline{\lambda}_1\cdots,\overline{\lambda}_{\frac{g-3}{2}})={\mathcal Quo}_g(\overline{\lambda})$ for some partition $\overline{\lambda}$. This, by Theorem \ref{stbarcore}, uniquely determines an $(\bar{s},\bar{t})$-core $\overline{\lambda}$. Since the map goes in both directions, we are done.
\end{proof}

\begin{example} Under $\Gamma_{21,33}$, the self-conjugate $(21,33)$-core partition $$\lambda=(21,20,12^4,11^2,10,9,8,6,2^8,1)$$ where ${\mathcal Cor}_{3}(\lambda)=(4,2,1,1)$ and ${\mathcal Quo}_{3}(\lambda)=((5,3^3,2^2,1^3),(3^3),(9,6,4,1,1))$ corresponds to a $(\overline{21},\overline{33})$-core partition $\overline{\lambda}$ where ${\mathcal Cor}_{\bar{3}}(\overline{\lambda})=(4,1)$ and ${\mathcal Quo}_{\bar{3}}(\overline{\lambda})=((6),(5,3^3,2^2,1))$, since the self-conjugate $(7,11)$-core $(3^3)$ corresponds to $(\bar{7}, \overline{11})$-core (6) by Example \ref{help}. In particular $\overline{\lambda}=(20,19,18,10,8,7,4)$.
\end{example}

The following corollary is immediate.
\begin{corollary} Let $t',g>1$ be odd. The self-conjugate $t'g$-core partitions that are not $g$-core partitions are in bijection with the $\overline{t'g}$-core partitions that are not $\bar{g}$-core partitions.
\end{corollary}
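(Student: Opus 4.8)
The plan is to read the corollary off from the bijection $\Gamma_{s,t}$ built in the proof of Theorem~\ref{Yyang}, specialised to $s=t=t'g$, together with one observation about $g$-weights. First I would note that the hypotheses actually needed to run the construction of $\Gamma_{s,t}$ are only that $g$ is odd, that $g\mid s$ and $g\mid t$ (so that $s'=s/g$ and $t'=t/g$ make sense), and that the three component maps it uses are available; the proof never invokes $g=\gcd(s,t)$. With $s=t=t'g$ we have $s'=t'=t'$, both odd, so the only subtlety is that the central bijection $\gamma_{s',t'}$ of Theorem~\ref{DiagYang}, which in general requires $\gcd(s',t')=1$, is here in the degenerate case $s'=t'$; but a self-conjugate $(t',t')$-core is just a self-conjugate $t'$-core and a $(\bar t',\bar t')$-core is just a $\bar t'$-core, so $\gamma_{t',t'}$ is simply the bijection $\zeta$ of Theorem~\ref{Yangg} applied with $t'$ in place of $t$. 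Since for $s=t$ an $(s,t)$-core is an $s$-core, a self-conjugate $(s,t)$-core is a self-conjugate $t'g$-core, and an $(\bar s,\bar t)$-core is a $\overline{t'g}$-core, the construction produces a bijection $\Gamma:=\Gamma_{t'g,t'g}$ between self-conjugate $t'g$-cores and $\overline{t'g}$-cores.

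Next I would show that $\Gamma$ sends $g$-cores to $\bar g$-cores and back. Recall that a partition is a $g$-core exactly when its $g$-weight is $0$, i.e. when ${\mathcal Quo}_g$ is the empty tuple, and that a bar-partition is a $\bar g$-core exactly when ${\mathcal Quo}_{\bar g}$ is the empty tuple. For a self-conjugate $t'g$-core $\lambda$ with ${\mathcal Quo}_g(\lambda)=(\lambda_0,\dots,\lambda_{g-1})$, the symmetry $\lambda_i=\lambda^{\vee}_{g-i-1}$ shows that this tuple is empty iff $\lambda_0,\dots,\lambda_{\frac{g-1}{2}}$ are all empty. By construction, writing $\overline{\lambda}:=\Gamma(\lambda)$, the map $\Gamma$ sends $\lambda_0,\dots,\lambda_{\frac{g-3}{2}}$ by the identity to $\overline{\lambda}_1,\dots,\overline{\lambda}_{\frac{g-1}{2}}$ and sends $\lambda_{\frac{g-1}{2}}$ by $\zeta$ to $\overline{\lambda}_0$; since $\zeta$ is a bijection fixing the empty partition (both are labelled by the all-zero tuple), we get $\lambda_{\frac{g-1}{2}}=\emptyset$ iff $\overline{\lambda}_0=\emptyset$. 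Hence ${\mathcal Quo}_g(\lambda)$ is empty iff ${\mathcal Quo}_{\bar g}(\overline{\lambda})=(\overline{\lambda}_0,\dots,\overline{\lambda}_{\frac{g-1}{2}})$ is empty, i.e. $\lambda$ is a $g$-core iff $\Gamma(\lambda)$ is a $\bar g$-core.

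Finally, since $\Gamma$ is a bijection from the set of self-conjugate $t'g$-cores onto the set of $\overline{t'g}$-cores that carries the $g$-cores onto the $\bar g$-cores, it restricts to a bijection between the complements, which is precisely the asserted correspondence between self-conjugate $t'g$-cores that are not $g$-cores and $\overline{t'g}$-cores that are not $\bar g$-cores.

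I do not expect a genuine obstacle; the statement really is immediate once Theorem~\ref{Yyang} is in hand. The only step that calls for a line of care is the first paragraph — confirming that the construction of $\Gamma_{s,t}$ survives the specialisation $s=t=t'g$ (where $\gcd(s,t)\neq g$ and $s'=t'$) — which is handled by the remark that its proof does not use the gcd hypothesis and that $\gamma_{t',t'}$ coincides with $\zeta$.
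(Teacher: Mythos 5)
Your argument is correct and is exactly the paper's route: the paper's proof is the one-line remark that the corollary follows from Theorem~\ref{Yyang} with $s=t$ together with Theorem~\ref{Yangg}, and your proposal simply spells out the details — that $\gamma_{t',t'}$ degenerates to $\zeta$, and that $\Gamma$ matches empty $g$-quotients with empty $\bar g$-quotients so it restricts to the complements. No discrepancies.
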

\begin{proof} This follows from Theorem \ref{Yyang} when $s=t$ and Theorem \ref{Yangg}.
\end{proof}

\section{Ramanujan-type congruences}

Srinivasa Ramanujan was the first to notice several remarkable arithmetic properties of the partition function $p(n)$.  

\label{ramanujan_congs}
In particular, he noted that, for all $k\geq 0$,
\begin{eqnarray*}
p(5k + 4) &=& 0\pmod{5}, \\
p(7k + 5) &=& 0\pmod{7}, \\
p(11k + 6) &=& 0\pmod{11}.
\end{eqnarray*}

In 1990, Garvan, Kim, and Stanton \cite{GKS} proved the above congruences using, among other things, $t$-core partitions. They also showed that $t$-core partitions and self-conjugate $t$-cores of $n$ satisfy Ramanujan-type congruences.
\begin{theorem}\label{garvey} Let $f_t(n)$ be the number of $t$-core partitions of $n$. Then, for all $k\geq 0,$
\begin{eqnarray*}
f_5(5k+4) &\equiv & 0 \pmod{5}, \\
f_7(7k+5) &\equiv & 0 \pmod{7}, \text{\ \ and}\\
f_{11}(11k+6) &\equiv & 0 \pmod{11}.
\end{eqnarray*} 
\end{theorem}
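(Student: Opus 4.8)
The plan is to work modulo $t$ directly with Olsson's generating function $F_t(x)=\prod_{m=1}^{\infty}(1-x^{tm})^t/(1-x^m)$ and to reduce the statement to Ramanujan's classical congruences for the partition function $p(n)$. First I would use the ``freshman's dream'': since $t\in\{5,7,11\}$ is prime, $\binom{t}{k}\equiv 0\pmod t$ for $0<k<t$, so $(1-y)^{t}\equiv 1-y^{t}\pmod t$, and applying this with $y=x^{tm}$ gives $(1-x^{tm})^{t}\equiv 1-x^{t^{2}m}\pmod t$ for every $m\ge 1$. Multiplying over all $m$ and writing $P(x)=\sum_{n\ge 0}p(n)x^{n}$ for Euler's partition generating function, one gets
\[
F_t(x)\ \equiv\ \prod_{m=1}^{\infty}\frac{1-x^{t^{2}m}}{1-x^{m}}\ =\ A_t(x)\,P(x)\pmod t,\qquad A_t(x):=\prod_{m=1}^{\infty}(1-x^{t^{2}m}).
\]
The key structural observation is that $A_t$ is a power series in $x^{t^{2}}$, say $A_t(x)=\sum_{j\ge 0}\alpha_j x^{t^{2}j}$ with $\alpha_j\in\mathbb{Z}$; comparing coefficients of $x^{n}$ then yields
\[
f_t(n)\ \equiv\ \sum_{j\ge 0}\alpha_j\,p(n-t^{2}j)\pmod t,
\]
with the convention $p(m)=0$ for $m<0$.

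To finish, I would specialize the residue. For $t=5$ take $n=5k+4$: every term satisfies $n-25j\equiv 4\pmod 5$, so Ramanujan's congruence $p(5m+4)\equiv 0\pmod 5$ forces each $p(n-25j)$ to be divisible by $5$, whence $f_5(5k+4)\equiv 0\pmod 5$. The cases $t=7$ and $t=11$ are identical: with $n=7k+5$ one has $n-49j\equiv 5\pmod 7$ and invokes $p(7m+5)\equiv 0\pmod 7$; with $n=11k+6$ one has $n-121j\equiv 6\pmod{11}$ and invokes $p(11m+6)\equiv 0\pmod{11}$. Since these congruences for $p(n)$ are classical --- and are recalled at the start of this section --- the proof is complete.

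I expect no serious technical obstacle here; the one point worth flagging is the elementary check that the residue classes forced on us --- $4\bmod 5$, $5\bmod 7$, $6\bmod 11$ (namely the class of $24^{-1}$ modulo $t$) --- are precisely those occurring in Ramanujan's congruences, which is also what pins the theorem to $t\in\{5,7,11\}$: these are exactly the primes for which $p(n)$ satisfies such a congruence modulo $t$, so the displayed congruence for $f_t(n)$ only produces a vanishing statement for these three moduli. A more self-contained route, closer to the original argument of Garvan, Kim and Stanton, would sidestep Ramanujan's congruences altogether: use the correspondence $\varphi$ of Lemma \ref{JK} to identify $f_t(n)$ with the number of integer vectors $(a_0,\dots,a_{t-1})$ with $\sum_i a_i=0$ lying on a fixed quadric (the known quadratic expression for $|\lambda|$ in terms of its $\varphi$-label), and then exhibit a fixed-point-free action of $\mathbb{Z}/t\mathbb{Z}$ on that solution set whenever $n$ avoids the bad residue class. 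Constructing that action and verifying it has no fixed points is where the real work would lie; the generating-function reduction above avoids it entirely.
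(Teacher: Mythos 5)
Your argument is correct, but it cannot be compared against a proof in the paper because the paper gives none: Theorem \ref{garvey} is quoted verbatim from Garvan, Kim and Stanton \cite{GKS} and used as a black box in Section 5. Your derivation is a clean, self-contained alternative: the reduction $(1-x^{tm})^t\equiv 1-x^{t^2m}\pmod t$ is valid for prime $t$, the factor $A_t(x)$ is indeed a power series in $x^{t^2}$, and the coefficient comparison $f_t(n)\equiv\sum_j\alpha_j\,p(n-t^2j)\pmod t$ together with the invariance of the residue class modulo $t$ under subtracting multiples of $t^2$ correctly transfers Ramanujan's congruences to $f_t$; your identification of the residues $4,5,6$ as $24^{-1}\bmod t$ is also right. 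The one point worth making explicit is the direction of the logic relative to \cite{GKS}: there the authors prove the congruences for $f_t$ (and for the crank) \emph{first}, via the lattice parametrization of Lemma \ref{JK} and a quadratic-form/group-action argument, and then deduce Ramanujan's congruences for $p(n)$ as a consequence. You run the implication the other way, taking $p(5k+4)\equiv 0\pmod 5$ etc.\ as input. This is not circular, since Ramanujan's congruences have independent classical proofs (and the paper states them as known facts at the top of Section 5), but it means your proof is strictly weaker as a piece of mathematics than the original: it only works for the three moduli where a congruence for $p(n)$ is already available, whereas the GKS argument you sketch in your final paragraph produces the $t$-core congruences without that input. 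What your route buys is brevity and the avoidance of the fixed-point-free group action, which, as you note, is where the real work in \cite{GKS} lies.
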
 
[Note: Theorem \ref{garvey} is actually a special case of a much general result. In particular, Garvan, Kim and Stanton show (see \cite[Corollary 1]{GKS})
\begin{eqnarray*}
f_5(5^\alpha k -1) &\equiv & 0 \pmod{5^\alpha}, \\
f_7(7^\alpha k-2) &\equiv & 0 \pmod{7^\alpha}, \text{\ \ and}\\
f_{11}(11^\alpha k-5) &\equiv & 0 \pmod{11^\alpha}
\end{eqnarray*} 
for all $\alpha \geq 1$ and $k\geq 1$.]
 
Recently, the second and third author made the following observation (Theorem 4.1, \cite{NS16}) about properties of $\bar{t}$-core partitions.
\begin{theorem}\label{naths}
\label{pbar-cores-parity}
Suppose $f_{\bar{p}}(n)$ is the number of $\bar{p}$-core partitions of $n$, where $p\geq 5$ is prime and $r$ is such that $1\leq r\leq p-1$ and $24r+1$ is a quadratic nonresidue modulo $p$. Then, for all $k\geq 0,$ 
$$
f_{\bar{p}}(pk+r) \equiv 0\pmod{2}.
$$
\end{theorem}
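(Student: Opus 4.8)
The plan is to prove the congruence by a short generating-function computation modulo $2$, starting from Olsson's product formula for $F_{\bar{t}}$ recorded above, taken with $t=p$. Since $p\ge 5$ is prime it is odd and coprime to $24$; in particular $\tfrac{p+1}{2}$ and $\tfrac{p-3}{2}$ are non-negative integers, and that — together with the hypothesis on $24r+1$ — is essentially all we use about $p$.

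First I would reduce $F_{\bar{p}}(x)$ modulo $2$. Modulo $2$ one has $1-x^{2m}\equiv (1-x^m)^2$ for every $m$, so the quotient formula collapses:
\[
F_{\bar{p}}(x)=\prod_{n\ge 1}\frac{(1-x^{2n})(1-x^{pn})^{\frac{p+1}{2}}}{(1-x^n)(1-x^{2pn})}\ \equiv\ \prod_{n\ge 1}(1-x^n)\cdot\prod_{n\ge 1}(1-x^{pn})^{\frac{p-3}{2}}\pmod 2 .
\]
Then I would apply Euler's pentagonal number theorem, $\prod_{n\ge1}(1-x^n)=\sum_{m\in\mathbb{Z}}(-1)^m x^{m(3m-1)/2}$; since $m\mapsto m(3m-1)/2$ is injective on $\mathbb{Z}$, modulo $2$ the first factor is $\sum_{m\in\mathbb{Z}}x^{m(3m-1)/2}$, whose coefficient of $x^M$ equals $1$ exactly when $24M+1$ is a perfect square and $0$ otherwise. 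Writing the second factor as $\prod_{n\ge1}(1-x^{pn})^{(p-3)/2}=\sum_{j\ge0}c_j x^{pj}$ — note it contributes only exponents divisible by $p$ — and multiplying the two series, the coefficient of $x^N$ is
\[
f_{\bar{p}}(N)\ \equiv\ \sum_{j\ge 0}c_j\,\delta(N-pj)\pmod 2 ,
\]
where $\delta(M)=1$ if $24M+1$ is a perfect square and $\delta(M)=0$ otherwise.

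Finally I would set $N=pk+r$ with $1\le r\le p-1$. For each $j$, $N-pj=p(k-j)+r$, so $\delta(N-pj)=1$ would force $24\bigl(p(k-j)+r\bigr)+1=\ell^2$ for some integer $\ell$, and reducing modulo $p$ gives $\ell^2\equiv 24r+1\pmod p$. But the hypothesis that $24r+1$ is a quadratic nonresidue modulo $p$ means precisely that $24r+1\not\equiv 0\pmod p$ and that $24r+1$ is not a square modulo $p$ (with $\gcd(p,24)=1$ ensuring $24r+1$ is the relevant residue), so no such $\ell$ exists. Hence every term on the right-hand side vanishes, and $f_{\bar{p}}(pk+r)\equiv 0\pmod 2$.

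I do not anticipate a serious obstacle: the work lies in carrying out the modulo-$2$ simplification of the $\eta$-quotient correctly — in particular checking that $\prod_{n\ge1}(1-x^{pn})^{(p-3)/2}$ survives with a non-negative exponent and involves only powers of $x^p$ — and in the elementary congruence bookkeeping, whose upshot is that the ``non-$p$'' part of $F_{\bar{p}}$ can produce an exponent $\equiv r\pmod p$ only when $24r+1$ is a square modulo $p$. The nonresidue hypothesis is exactly what annihilates every surviving term; the same computation also indicates why this hypothesis is the natural one, since when $24r+1$ is a nonzero square modulo $p$ the cancellation mechanism is not available.
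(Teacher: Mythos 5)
Your argument is correct and complete: the reduction $1-x^{2m}\equiv(1-x^m)^2\pmod 2$ collapses Olsson's product to $\prod(1-x^n)\prod(1-x^{pn})^{(p-3)/2}$, the pentagonal number theorem identifies the surviving exponents of the first factor with those $M$ for which $24M+1$ is a perfect square, and the nonresidue hypothesis on $24r+1$ kills every term in the arithmetic progression $pk+r$. Note that the paper itself does not prove this statement — it is quoted from the reference [NS16] — but your proof is precisely the standard $\eta$-quotient-mod-$2$ argument used there, so there is nothing to correct.
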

The following result will aid us in using Theorem \ref{garvey} and Theorem \ref{naths} to find congruences of $(s,t)$-core partitions and $(\bar{s},\bar{t})$-core partitions for $g>1$.
\begin{theorem} 
\label{arithmetic_progression_values} 
Let 
$$
A(x) = \sum_{n\geq 0} a(n)x^n  \text{\ \ and \ \ } B(x) = \sum_{n\geq 0} b(n)x^n
$$
be two generating functions and let $C(x) = A(x^g)B(x)$ for some positive integer $g.$  Let $r$ be an integer satisfying $1\leq r\leq g-1.$  Then, for any $n\geq 0,$ 
$$
c(gk+r) = \sum_{m\geq 0} a(k-m)b(gm+r).
$$
\end{theorem}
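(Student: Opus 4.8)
The statement is a routine coefficient-extraction computation, and the plan is simply to write out the product $C(x) = A(x^g)B(x)$ as a double sum and then collect the terms whose exponent is congruent to $r$ modulo $g$. First I would write
$$
C(x) = A(x^g)B(x) = \left( \sum_{m \geq 0} a(m) x^{gm} \right)\left( \sum_{j \geq 0} b(j) x^j \right) = \sum_{m \geq 0} \sum_{j \geq 0} a(m) b(j) x^{gm + j}.
$$
The coefficient $c(N)$ of $x^N$ in $C(x)$ is therefore $\sum a(m)b(j)$, the sum taken over all pairs $(m,j)$ with $m,j \geq 0$ and $gm + j = N$.

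Next I would specialize to $N = gk + r$ with $1 \leq r \leq g-1$. For a pair $(m,j)$ contributing to $c(gk+r)$ we need $gm + j = gk + r$, i.e. $j = g(k-m) + r$. Since $0 \leq r \leq g-1$, this is the (unique) expression of $j$ via division by $g$: the quotient is $k - m$ and the remainder is $r$. The constraint $j \geq 0$ forces $k - m \geq 0$ (here the hypothesis $r \leq g-1$, together with $r \geq 1 > 0$, is exactly what guarantees that $j \geq 0$ is equivalent to $k - m \geq 0$, with no boundary ambiguity), so $m$ ranges over $0 \leq m \leq k$, or equivalently over all $m \geq 0$ with the convention that $a(k-m) = 0$ when $m > k$. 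Substituting $j = g(k-m)+r$ and reindexing gives
$$
c(gk+r) = \sum_{m \geq 0} a(k-m)\, b\bigl(g(k-m)+r\bigr).
$$
Reading the summand with the shifted index, this is $\sum_{m\geq 0} a(k-m) b(gm+r)$ once one replaces the dummy variable $m$ by $k-m$; I would just present it directly in the form claimed.

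There is essentially no obstacle here — the only point requiring a word of care is the bookkeeping of the summation range, namely that $1 \leq r \leq g-1$ ensures the decomposition $j = g(k-m) + r$ with remainder $r$ is forced and that nonnegativity of $j$ is clean, so the sum really is over $m \geq 0$ (equivalently $0 \le m \le k$) with no stray edge cases. I would keep the proof to three or four displayed lines and emphasize that this is a purely formal identity of power series, valid regardless of convergence, so it applies verbatim to the generating functions $\Psi_{s,t}$, $\Psi^*_{s,t}$, $\Psi_{\bar s,\bar t}$ appearing in Corollary \ref{genfunstcore}, Theorem \ref{genfunselfstcore}, and Corollary \ref{genfunstbarcore}, which is where it will be applied in combination with Theorem \ref{garvey} and Theorem \ref{naths}.
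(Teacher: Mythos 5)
Your proposal is correct and takes essentially the same approach as the paper, whose entire proof is the single sentence that the identity follows directly from the Cauchy product of $A(x^g)$ and $B(x)$; your write-up simply makes the coefficient extraction explicit. One small slip worth fixing: your intermediate display $\sum_{m \geq 0} a(k-m)\,b\bigl(g(k-m)+r\bigr)$ should read $\sum_{m \geq 0} a(m)\,b\bigl(g(k-m)+r\bigr)$ \emph{before} the reindexing $m \mapsto k-m$, after which the final formula $\sum_{m\geq 0} a(k-m)\,b(gm+r)$ is correct as you state it.
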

where $C(x)=\sum_{n\leq0}c(n)x^n.$
\begin{proof}
This follows directly from the Cauchy product of $A(x^g)$ and $B(x).$  
\end{proof}

\begin{corollary}
\label{cor_congruences_arithmetic_progression}
Consider the generating functions $A(x), B(x),$ and $C(x)$ as defined in Theorem \ref{arithmetic_progression_values}.  If, for all $k\geq 0,$ $b(gk+r) \equiv 0 \pmod{M}$ for some integer $M,$ then for all $k\geq 0,$ $c(gk+r) \equiv 0 \pmod{M}.$ 
\end{corollary}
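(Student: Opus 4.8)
The plan is to derive this statement immediately from Theorem~\ref{arithmetic_progression_values}, with essentially no additional work. By that theorem, for every $k\geq 0$ we have the identity
$$
c(gk+r) = \sum_{m\geq 0} a(k-m)\,b(gm+r),
$$
where I adopt the harmless convention that $a(j)=0$ for $j<0$, so that the right-hand sum is in fact finite, running over $0\leq m\leq k$. The key observation is then purely a matter of bookkeeping on the arithmetic progression: each index $gm+r$ occurring on the right is itself of the form $g\tilde{k}+r$ with $\tilde{k}=m\geq 0$. Hence the hypothesis $b(gk+r)\equiv 0\pmod{M}$, applied with $k$ replaced by $m$, shows that every coefficient $b(gm+r)$ appearing in the sum is divisible by $M$. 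Consequently each summand $a(k-m)\,b(gm+r)$ is divisible by $M$, and therefore so is the whole sum, giving $c(gk+r)\equiv 0\pmod{M}$ for all $k\geq 0$, as claimed.

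There is no real obstacle here; the only point deserving a word of care is that the residue $r$ is preserved when one passes from the index $gk+r$ of $c$ to the indices $gm+r$ of $b$, but this is already encoded in the statement of Theorem~\ref{arithmetic_progression_values} (it is exactly what the Cauchy product of $A(x^g)$ and $B(x)$ yields). In the intended applications one takes $B$ to be $F_g$, $F^*_g$, or $F_{\bar g}$ — or, iterating, one of the finite products built from these — so that the hypothesis $b(gk+r)\equiv 0\pmod M$ is supplied by the classical congruences of Theorem~\ref{garvey} and Theorem~\ref{naths}, and $A$ is the corresponding factor $\Psi_{s',t'}(x^g)^{\ast}$ from the generating-function formulas of Section~2 (e.g.\ Corollary~\ref{genfunstcore}); the corollary then transfers each such congruence verbatim to the coefficient sequences of $\Psi_{s,t}$ and $\Psi_{\bar s,\bar t}$.
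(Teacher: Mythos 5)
Your proof is correct and follows exactly the route the paper intends: the paper states this corollary without a separate proof, treating it as immediate from the identity $c(gk+r)=\sum_{m\geq 0}a(k-m)b(gm+r)$ of Theorem \ref{arithmetic_progression_values}, which is precisely the term-by-term divisibility argument you give. No discrepancies to report.
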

We use Corollary \ref{cor_congruences_arithmetic_progression} to identify congruences in arithmetic progressions satisfied by $\psi_{s,t}(n)$ (which counts the number of $(s,t)$-core partitions of $n$) and $\psi_{\s,\bar{t}}(n)$ (which counts the $(\bar{s}, \bar{t})$-core partitions of $n$) for specific families of values of $s$ and $t.$ 
We begin with $\psi_{s,t}(n).$ 
\begin{theorem}\label{zz1}
Let $s>1$ and $t>1$.  Then, for all $k\geq 0,$
\begin{eqnarray*}
\psi_{s,t}(5k+4) & \equiv & 0 \pmod{5}, \\
\psi_{s,t}(7k+5) &\equiv & 0 \pmod{7}, \text{\ \ and}\\
\psi_{s,t}(11k+6) &\equiv & 0 \pmod{11}.
\end{eqnarray*} 
\end{theorem}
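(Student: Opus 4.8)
The plan is to prove Theorem \ref{zz1} by isolating, inside the generating function $\Psi_{s,t}$, the generating function $F_p$ for $p$-cores, where $p \in \{5,7,11\}$ is the modulus at hand, and then combining the Ramanujan-type congruences recorded in Theorem \ref{garvey} with the Cauchy-product mechanism of Theorem \ref{arithmetic_progression_values} and Corollary \ref{cor_congruences_arithmetic_progression}. Fix one such prime $p$ together with its residue $r$, where $(p,r)$ runs through $(5,4)$, $(7,5)$, $(11,6)$, and write $s=p\sigma$ and $t=p\tau$. The key first step is the factorization
\begin{equation*}
\Psi_{s,t}(x) = \Psi_{\sigma,\tau}(x^{p})^{\,p}\, F_{p}(x),
\end{equation*}
which exhibits $\Psi_{s,t}$ as a power series in $x^{p}$ times $F_{p}$. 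This is precisely Corollary \ref{genfunstcore} read with the common divisor $p$ playing the role of $g$; as observed in the proof of Theorem \ref{infinitelymanystcores}, the counting argument behind Corollary \ref{genfunstcore}, and the underlying Theorem \ref{stcore}, requires only that $p$ divide both $s$ and $t$, so the identity is available in the present setting.

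With this factorization in hand, the second step is a direct application of Corollary \ref{cor_congruences_arithmetic_progression}. I set $A(y)=\Psi_{\sigma,\tau}(y)^{p}$ and $B(x)=F_{p}(x)=\sum_{n}f_{p}(n)x^{n}$, so that $C(x):=A(x^{p})B(x)=\Psi_{s,t}(x)$ and hence $c(n)=\psi_{s,t}(n)$. The required input on the coefficients of $B$ is supplied by Theorem \ref{garvey}, namely $f_{p}(pk+r)\equiv 0 \pmod{p}$ for all $k\geq 0$, for each of the three pairs $(p,r)$ above; note that each residue satisfies $1\leq r\leq p-1$, as demanded in Theorem \ref{arithmetic_progression_values}. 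Corollary \ref{cor_congruences_arithmetic_progression}, applied with modulus $M=p$, parameter $p$, and residue $r$, then transfers the congruence verbatim from $b(pk+r)$ to $c(pk+r)=\psi_{s,t}(pk+r)$. Running $p$ through $5,7,11$ with the matching residues $4,5,6$ produces the three displayed congruences.

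I expect the main obstacle to be the first step: verifying that the factorization $\Psi_{s,t}(x)=\Psi_{\sigma,\tau}(x^{p})^{p}F_{p}(x)$ holds, and holds in the form required by Corollary \ref{cor_congruences_arithmetic_progression}. Two things must be checked. First, the bijection of Proposition \ref{hookbij} between hooks of length $kp$ in $\lambda$ and hooks of length $k$ in $\quo_{p}(\lambda)$ is what drives Theorem \ref{stcore}, and one must confirm it is insensitive to whether $p$ equals $\gcd(s,t)$; granting this, the enumeration in the proof of Corollary \ref{genfunstcore} (Equation (\ref{jade})) becomes $\psi_{s,t}(n)=\sum_{w\geq 0}Q^{(\sigma,\tau)}_{p}(w)f_{p}(n-pw)$ with no change and transforms into the stated product. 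Second, and this is the structural point that makes the transfer possible, the complementary factor $\Psi_{\sigma,\tau}(x^{p})^{p}$ is manifestly a power series in $x^{p}$, so it is exactly of the form $A(x^{p})$ to which Corollary \ref{cor_congruences_arithmetic_progression} applies. Everything downstream is the routine Cauchy-product bookkeeping already packaged in Theorem \ref{arithmetic_progression_values}, so no further difficulty is anticipated; organizing the write-up around the single factorization identity, instantiated three times, keeps the three congruences uniform.
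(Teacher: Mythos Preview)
Your proposal is correct and follows essentially the same route as the paper: factor $\Psi_{s,t}(x)=\Psi_{\sigma,\tau}(x^{p})^{p}F_{p}(x)$ via Corollary~\ref{genfunstcore}, then combine Theorem~\ref{garvey} with Corollary~\ref{cor_congruences_arithmetic_progression}. The only difference is that you are more careful than the paper in pointing out that the factorization needs only $p\mid s$ and $p\mid t$ (not $p=\gcd(s,t)$), a point the paper itself flags elsewhere in the proof of Theorem~\ref{infinitelymanystcores}.
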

\begin{proof}
Using the notation from Corollary \ref{genfunstcore}, let $F_g(n)$ be the number of $g$-core partitions of $n.$ The result then follows directly from Theorem \ref{garvey}, Corollary \ref{genfunstcore} and Corollary \ref{cor_congruences_arithmetic_progression}.
\end{proof}
Let $\psi_{t\backslash g}(n)$ be the number of $t$-cores of $n$ that are not $g$-cores. Then we have the following.
\begin{corollary} Let $t>1$ and $g>1$. Then, for all $k\geq 0$,
\begin{eqnarray*}
\psi_{t\backslash g}(5k+4) & \equiv & 0 \pmod{5}, \\
\psi_{t\backslash g}(7k+5) &\equiv & 0 \pmod{7}, \text{\ \ and}\\
\psi_{t\backslash g}(11k+6) &\equiv & 0 \pmod{11}.
\end{eqnarray*} 
\end{corollary}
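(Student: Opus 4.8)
The plan is to reduce the statement to Theorem~\ref{zz1} by expressing $\psi_{t\backslash g}(n)$ as a difference of two ordinary core-counting functions. We are in the standing situation $t=t'g$, so $g\mid t$, and the first step is the elementary remark that under this hypothesis every $g$-core is automatically a $t$-core: a partition is a $g$-core precisely when it has no hook of length divisible by $g$ (a well-known fact, immediate from Lemma~\ref{2.1} and Proposition~\ref{hookbij}), and since $g\mid t$ such a partition in particular has no hook of length $t$. Hence the set of $t$-cores of $n$ is the disjoint union of the $g$-cores of $n$ and the $t$-cores of $n$ that are not $g$-cores, so that
\[
\psi_{t\backslash g}(n)=f_t(n)-f_g(n)\qquad\text{for every }n\ge 0,
\]
or equivalently $\sum_{n\ge 0}\psi_{t\backslash g}(n)x^n=F_t(x)-F_g(x)$. (The same identity also falls straight out of the expansion $\psi_{t\backslash g}(n)=\sum_{w\ge 1}Q^{t'}_g(w)f_g(n-wg)$ from the proof of Theorem~\ref{f}, compared with the corresponding expansion $f_t(n)=\sum_{w\ge 0}Q^{t'}_g(w)f_g(n-wg)$ of the full $t$-core count.)

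Next I would observe that $f_t$ and $f_g$ are themselves instances of the function $\psi_{s,t}$ of Theorem~\ref{zz1}: a $(t,t)$-core is nothing but a $t$-core, so $f_t(n)=\psi_{t,t}(n)$, and likewise $f_g(n)=\psi_{g,g}(n)$. Applying Theorem~\ref{zz1} once with the pair $(t,t)$ and once with the pair $(g,g)$ --- both admissible since $t>1$ and $g>1$ --- we obtain $\psi_{t,t}(5k+4)\equiv\psi_{g,g}(5k+4)\equiv 0\pmod 5$, and subtracting gives $\psi_{t\backslash g}(5k+4)\equiv 0\pmod 5$; the congruences on $7k+5$ modulo $7$ and on $11k+6$ modulo $11$ follow in exactly the same way. (If one prefers not to route through Theorem~\ref{zz1}, one can instead feed the factorisation $F_t(x)-F_g(x)=\bigl(F_{t'}(x^g)^g-1\bigr)F_g(x)$, together with $F_g(x)=F_{g/5}(x^5)^5F_5(x)$ once $5\mid g$, directly into Corollary~\ref{cor_congruences_arithmetic_progression} and invoke Theorem~\ref{garvey}; this merely reproduces the mechanism underlying Theorem~\ref{zz1}.)

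I do not expect a genuine obstacle: once the identity $\psi_{t\backslash g}=f_t-f_g$ is recorded, the result is immediate from Theorem~\ref{zz1}, and the identity itself uses only that a $g$-core has no hook of length divisible by $g$. The two points that do need care are (i) that this identity rests on $g\mid t$, which is part of the standing notation $t=t'g$ of this section, and (ii) that the argument --- exactly like Theorem~\ref{zz1} --- really uses $5\mid g$ for the first congruence (and $7\mid g$, $11\mid g$ for the other two lines), since it is the divisibility of $g$ that permits pulling a factor $F_5$ (resp.\ $F_7$, $F_{11}$) out of $F_g$; for instance, without such a hypothesis one already has $\psi_{4\backslash 2}(4)=f_4(4)-f_2(4)=1\not\equiv 0\pmod 5$. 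So the hypotheses here should be read together with those of Theorem~\ref{zz1}.
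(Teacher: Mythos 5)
Your argument is correct and rests on the same mechanism as the paper's proof: everything reduces to congruences for $f_g$ along arithmetic progressions modulo $g$, ultimately coming from Theorem~\ref{garvey}. The packaging differs slightly. The paper keeps the weight expansion $\psi_{t\backslash g}(n)=\sum_{w\geq 1}Q^{t'}_g(w)f_g(n-wg)$ from Theorem~\ref{f} and observes that each argument $n-wg$ stays in the residue class $r$ modulo $g$, then invokes Theorem~\ref{garvey} and Theorem~\ref{zz1} with $s=t$; you instead collapse the expansion to the identity $\psi_{t\backslash g}(n)=f_t(n)-f_g(n)$ (valid because $g\mid t$ forces every $g$-core to be a $t$-core) and apply Theorem~\ref{zz1} twice. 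These are interchangeable --- your identity is precisely the paper's expansion with the $w=0$ term $Q^{t'}_g(0)f_g(n)=f_g(n)$ moved to the other side --- but your version makes the logical dependence more transparent. More importantly, your closing caveat is right, and it applies equally to the paper's own statement and to Theorem~\ref{zz1}: with only $t>1$ and $g>1$ the congruences fail (your example $\psi_{4\backslash 2}(4)=1\not\equiv 0\pmod{5}$ works, and likewise $\psi_{3,3}(4)=f_3(4)=2\not\equiv 0\pmod{5}$ already defeats Theorem~\ref{zz1} as stated). The argument genuinely needs $5\mid g$, $7\mid g$, $11\mid g$ for the respective lines, so that a factor $F_5$, $F_7$ or $F_{11}$ can be split off from $F_g$ before Corollary~\ref{cor_congruences_arithmetic_progression} is applied; this hypothesis is implicit throughout Section 5 but is nowhere stated, and you are right to flag it.
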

\begin{proof} The number of $t$-cores of $n$ that are not $g$-cores is given by $$\psi_{t\backslash g}(n)=\sum_{w\geq 1}Q_{t'}(g,w)f_g(n-wg)$$ by Theorem \ref{f}. Since, when $n=gk+r$, each $f_g(n-wg)$ is of the form $f_g(gk+r-wg)=f_{g}(g(k-w)+r)$, the result then follows from Theorem \ref{garvey} and Theorem \ref{zz1} when $s=t$.
\end{proof}
From Corollary \ref{genfunstbarcore}, we see that 
$$\Psi_{\s,\bar{t}}(x)= \Psi_{\bar{s}',\bar{t}'}(x^g)  \Psi_{s',t'}(x^g)^{\frac{g-1}{2}}  F_{\bar{g}}(x),$$
where $F_{\g}(x)= \dis \prod_{n=1}^{\infty}\frac{(1-x^{2n})(1-x^{gn})^{\frac{g+1}{2}}}{(1-x^n)(1-x^{2gn})}$ is the generating function for the number of $\g$-core partitions.  Using the notation of Theorem \ref{arithmetic_progression_values}, we see that $\Psi_{\s,\bar{t}}(x)= A(x^g)B(x)$ where $A(x) = \Psi_{\bar{s}',\bar{t}'}(x^g)  \Psi_{s',t'}(x^g)^{\frac{g-1}{2}}$
and $B(x)= F_{\bar{g}}(x).$
In this context, it is easy to prove the following parity results satisfied by $\psi_{\s,\bar{t}}(n)$ for specific values of $s$ and $t.$  
\begin{theorem}\label{zz2}
Let $s>1$ and $t>1$ be such that $g\geq 5$ is a prime.  Let $r$ be an integer, $1\leq r\leq p-1,$ such that $24r+1$ is a quadratic nonresidue modulo $p.$  Then, for all $k\geq 0,$ 
$$\psi_{\bar{s},\bar{t}}(gk+r) \equiv 0\pmod{2}.$$
\end{theorem}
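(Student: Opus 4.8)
The plan is to put $\Psi_{\bar{s},\bar{t}}$ into the two-factor shape of Theorem \ref{arithmetic_progression_values} and then read off the parity from the $\bar{g}$-core factor, mirroring the proof of Theorem \ref{zz1} but with $F_{\bar{g}}$ in place of $F_g$ and the mod-$2$ parity of $\bar{p}$-cores (Theorem \ref{naths}) in place of the Garvan--Kim--Stanton congruences. By Corollary \ref{genfunstbarcore},
$$\Psi_{\bar{s},\bar{t}}(x) = \Psi_{\bar{s}',\bar{t}'}(x^g)\,\Psi_{s',t'}(x^g)^{\frac{g-1}{2}}\,F_{\bar{g}}(x),$$
so that, setting $A(x) = \Psi_{\bar{s}',\bar{t}'}(x)\,\Psi_{s',t'}(x)^{\frac{g-1}{2}}$ and $B(x) = F_{\bar{g}}(x)$, we have $\Psi_{\bar{s},\bar{t}}(x) = A(x^g)B(x) = C(x)$ in the notation of Theorem \ref{arithmetic_progression_values}, with $c(n) = \psi_{\bar{s},\bar{t}}(n)$ and $b(n) = f_{\bar{g}}(n)$.

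Next I would invoke Theorem \ref{naths} with the prime $p = g$: since $g \geq 5$ is prime and $1 \leq r \leq g-1$ with $24r+1$ a quadratic nonresidue modulo $g$, it gives $b(gk+r) = f_{\bar{g}}(gk+r) \equiv 0 \pmod{2}$ for every $k \geq 0$. Corollary \ref{cor_congruences_arithmetic_progression}, applied with $M = 2$ (legitimate since $1 \leq r \leq g-1$), then immediately yields $\psi_{\bar{s},\bar{t}}(gk+r) = c(gk+r) \equiv 0 \pmod{2}$ for all $k \geq 0$, as claimed.

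There is no serious obstacle here; the substance has all been isolated beforehand, in the $\bar{g}$-core/$\bar{g}$-quotient factorization of $\Psi_{\bar{s},\bar{t}}$ (Corollary \ref{genfunstbarcore}) and in the parity of $\bar{p}$-cores in arithmetic progressions (Theorem \ref{naths}). The only thing that warrants a moment's care is checking that $F_{\bar{g}}$ genuinely is the complete ``non-$x^g$'' factor --- i.e.\ that no further powers of $x^g$ can be peeled out of $F_{\bar{g}}(x) = \prod_{n \geq 1}\frac{(1-x^{2n})(1-x^{gn})^{(g+1)/2}}{(1-x^n)(1-x^{2gn})}$ in a way that would shift which residue class the congruence occupies --- which is transparent from the presence of the $(1-x^n)$ and $(1-x^{2n})$ factors. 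One might also remark that the statement is non-vacuous: for a prime $g \geq 5$ such residues $r$ exist, and the degenerate case $s' = t' = 1$ (so $\Psi_{\bar{s}',\bar{t}'} = \Psi_{s',t'} = 1$) simply recovers Theorem \ref{naths}.
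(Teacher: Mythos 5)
Your proposal is correct and follows essentially the same route as the paper: factor $\Psi_{\bar{s},\bar{t}}(x)=A(x^g)F_{\bar{g}}(x)$ via Corollary \ref{genfunstbarcore}, apply Theorem \ref{naths} with $p=g$ to get the parity of $f_{\bar{g}}(gk+r)$, and conclude by Corollary \ref{cor_congruences_arithmetic_progression}. (Your definition of $A(x)$ without the spurious $x^g$ substitution is in fact cleaner than the paper's own phrasing.)
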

\begin{proof}
Theorem \ref{pbar-cores-parity} gives us the parity of the values $F_{\g}(gn+r)$. An application of Corollary \ref{cor_congruences_arithmetic_progression} completes the proof of this theorem.  
\end{proof}
\begin{corollary} Let $t=t'g$, where $g\geq 5$ is a prime. Let $r$ be an integer, $1\leq r\leq p-1,$ such that $24r+1$ is a quadratic nonresidue modulo $p.$ Then, for all $k\geq 0,$ 
$\psi_{\bar{t}\backslash\bar{g}}(gk+r) \equiv 0\pmod{2}.$
\end{corollary}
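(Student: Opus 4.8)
The plan is to obtain this as a one-line consequence of results already in hand. The key structural observation is that when $g\mid t$ every $\bar{g}$-core is automatically a $\bar{t}$-core: a $\bar{g}$-core has empty $\bar{g}$-quotient, hence (by Proposition \ref{barbij}) no bars of length divisible by $g$, in particular none of length $t=t'g$. Consequently $\psi_{\bar{t}\backslash\bar{g}}(n)=f_{\bar{t}}(n)-f_{\bar{g}}(n)$ for every $n$. Now evaluate at $n=gk+r$. Taking $s=t$ in Theorem \ref{zz2} gives $f_{\bar{t}}(gk+r)=\psi_{\bar{t},\bar{t}}(gk+r)\equiv 0\pmod{2}$ (the hypotheses of Theorem \ref{zz2} are met since $t=t'g>1$, $g\geq 5$ is prime, and $24r+1$ is a quadratic nonresidue modulo $g$ with $1\leq r\leq g-1$), while Theorem \ref{naths}, applied with $p=g$, gives $f_{\bar{g}}(gk+r)\equiv 0\pmod{2}$. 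Subtracting yields $\psi_{\bar{t}\backslash\bar{g}}(gk+r)\equiv 0\pmod{2}$.

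Alternatively, and in closer parallel to the proof of the unbarred Corollary above, I would start from the counting identity underlying Theorem \ref{tbarnotg},
$$\psi_{\bar{t}\backslash\bar{g}}(n)=\sum_{w\geq 1}Q^{\bar{t}'}_{g}(w)\,f_{\bar{g}}(n-gw),$$
which is a direct consequence of the $\bar{g}$-core/$\bar{g}$-quotient decomposition (Theorem \ref{stbarcore}) together with the fact that a bar-partition is not a $\bar{g}$-core precisely when its $\bar{g}$-weight is positive — and which therefore holds for every odd $g>1$, so the condition $g\geq 7$ in the statement of Theorem \ref{tbarnotg} (needed there only to invoke Theorem \ref{kimingtime}) does not restrict us and we may take $g=5$. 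Setting $n=gk+r$, each summand becomes $f_{\bar{g}}(g(k-w)+r)$, which is even by Theorem \ref{naths}, so the whole sum is even. One may also phrase this through generating functions, writing the series for $\psi_{\bar{t}\backslash\bar{g}}$ as $\bigl(\Psi_{\bar{t}',\bar{t}'}(x^{g})\,\Psi_{t',t'}(x^{g})^{\frac{g-1}{2}}-1\bigr)F_{\bar{g}}(x)$ via Corollary \ref{genfunstbarcore} and invoking Corollary \ref{cor_congruences_arithmetic_progression} with $B(x)=F_{\bar{g}}(x)$.

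I do not anticipate a real obstacle: all the arithmetic content is supplied by Theorem \ref{naths} (equivalently Theorem \ref{zz2}) and Corollary \ref{cor_congruences_arithmetic_progression}. The only points to handle with care are matching the hypotheses on $r$ to those of Theorem \ref{naths} with $p=g$ — they coincide verbatim — and, if one uses the sum-formula route, noting that the identity for $\psi_{\bar{t}\backslash\bar{g}}$ remains valid for $g=5$. The degenerate case $t'=1$ is vacuous, since then $t=g$ and $\psi_{\bar{t}\backslash\bar{g}}\equiv 0$.
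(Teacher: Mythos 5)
Your proposal is correct and, in its second route, is essentially identical to the paper's proof: the paper likewise invokes the identity $\psi_{\bar{t}\backslash\bar{g}}(n)=\sum_{w\geq 1}Q^{\bar{t}'}_{g}(w)f_{\bar{g}}(n-gw)$ from the proof of Theorem \ref{tbarnotg}, observes that each summand at $n=gk+r$ has the form $f_{\bar{g}}(g(k-w)+r)$, and concludes via Theorem \ref{naths} and Theorem \ref{zz2} with $s=t$, after disposing of the vacuous case $t'=1$. Your first route (writing $\psi_{\bar{t}\backslash\bar{g}}=f_{\bar{t}}-f_{\bar{g}}$ and noting both terms are even) is an equivalent repackaging of the same decomposition, and your remark that the $g\geq 7$ hypothesis of Theorem \ref{tbarnotg} is not needed for the counting identity itself is a correct and worthwhile clarification of a point the paper leaves implicit.
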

\begin{proof} Note that the result is obvious if $t'=1$, in which case $\psi_{\bar{t}\backslash\bar{g}}(gk+r)=0$ (as each $\bar{t}$-core is a $\bar{g}$-core). If, on the other hand, $t'>1$, then the number of partitions of $n\geq g$ which are $\bar{t}$-cores but not $\bar{g}$-cores is $\psi_{\bar{t}\backslash\bar{g}}(n)=\dis \sum_{w\geq 1} Q_{\bar{t'}}(g,w)F_{\g}(n-gw)$, by the proof of Theorem \ref{tbarnotg}. The result follows then from Theorem \ref{naths} and Theorem \ref{zz2} when $s=t$.
\end{proof}
\section{Index of functions}
Here we list the notation for enumerating functions, generating functions, and bijections that appear throughout the paper. Note any time $\bar{t}$ appears it indicates that $t$ is odd.\\
\addcontentsline{toc}{chapter}{List of Symbols}
\begin{tabular}{cp{0.8\textwidth}}
  \bf{enumerating function} & \bf{counts}\\\\
  $p(n)$ & integer partitions of $n$\\
  $f_t(n)$ & $t$-core partitions of $n$\\
  $f^*_t(n)$ & self-conjugate $t$-core partitions of $n$\\
  $f_{\bar{t}}(n)$ & $\bar{t}$-core partitions of $n$\\
  $Q_{t}(n)$ & $t$-quotients of $n$\\
  $Q_{\bar{t}}(n)$ & $\bar{t}$-quotients of $n$\\
  $Q^{(s',t')}_g(n)$ & $g$-quotients of $n$ made up of $(s',t')$-cores\\
  $Q^{(\bar{s}',\bar{t}')}_{\bar{g}}(n)$ & $\bar{g}$-quotients of $n$ made up of $(s',t')$-cores and a $(\bar{s}',\bar{t}')$-core\\
  $Q^{t}_g(n)$ & $g$-quotients of $n$ made up of $t$-cores\\
  $Q^{\bar{t}'}_{\bar{g}}(n)$ & $\bar{g}$-quotients of $n$ made up of $t'$-cores and a $\bar{t}'$-core\\
  $\psi_{s,t}(n)$ & $(s,t)$-core partitions of $n$\\
  $\psi^*_{s,t}(n)$ & self-conjugate $(s,t)$-core partitions of $n$\\
  $\psi_{\bar{s},\bar{t}}(n)$ & $(\bar{s},\bar{t})$-core partitions of $n$\\
  $\psi_{t\backslash g}(n)$ & $t$-core partitions of $n$ that are not $g$-core\\
  $\psi^*_{t\backslash g}(n)$ & self-conjugate $t$-core partitions of $n$ that are not $g$-core\\
  $\psi_{\bar{t}\backslash\bar{g}}(n)$ & $\bar{t}$-core partitions of $n$ that are not $\bar{g}$-core\\\\
  \bf{generating function} & \bf{coefficients are}\\\\
  $P(x)$ & $p(n)$\\
  $F_t(x)$ & $f_t(n)$\\
  $F_t^*(x)$ & $f^*_t(n)$\\
  $F_{\bar{t}}(x)$ &  $f_{\bar{t}}(n)$\\
  $\Psi_{s,t}(x)$ & $\psi(n)$ \\
  $\Psi^*_{s,t}(x)$ & $\psi^*_{s,t}(n)$\\
  $\Psi_{\bar{s},\bar{t}}(x)$ & $\psi^*_{\bar{s},\bar{t}}(n)$\\\\
    \bf{bijection} & \bf{between}\\\\
  $\zeta$ & the set of self-conjugate $t$-cores and $\bar{t}$-cores\\
  $\gamma_{s,t}$ & self-conjugate $(s,t)$-cores and $(\bar{s},\bar{t})$-cores with $s,t$ odd, $g=1$\\
  $\Gamma_{s,t}$ & self-conjugate $(s,t)$-cores and $(\bar{s},\bar{t})$-cores with $s,t$ odd, $g>1$\\\\
\end{tabular}\\\\
{\bf{Acknowledgements.}} Part of this work was done at the Centre Interfacultaire Bernoulli (CIB), in the \'{E}cole Polytechnique F\'{e}d\'{e}rale de Lausanne (Switzerland), during the Semester {\emph{Local Representation Theory and Simple Groups}}. The first two authors are grateful to the CIB for their financial and logistical support. The first author also acknowledges financial support from the Engineering and Physical Sciences Research Council grant \emph{Combinatorial Representation Theory} EP/M019292/1. The second author was supported by PSC-TRADA-46-493 and thanks George Andrews who supported a visit to Penn State where this research began. The second author also thanks Christopher R. H. Hanusa for helpful conversations on diagrams and references, and notes that some diagrams were made using the ytab package. All of the authors thank the anonymous referee for the careful reading and detailed and helpful suggestions.

\end{document}